\theoremstyle{plain}
\newtheorem{theorem}{Theorem}[section]
\newtheorem*{theorem*}{Theorem}
\newtheorem*{theoreme*}{Théorème}%========= FR
\newtheorem*{question*}{Question}%========== FR
\newtheorem{lemma}[theorem]{Lemma}
\newtheorem{corollary}[theorem]{Corollary}
\newtheorem{proposition}[theorem]{Proposition}
\theoremstyle{definition}
\newtheorem{remark}[theorem]{Remark}
\theoremstyle{definition}
\newtheorem{definition}[theorem]{Definition}
\DeclareMathOperator{\Aut}{Aut}
\DeclareMathOperator{\Span}{Span}
\DeclareMathOperator{\rk}{rk}
\DeclareMathOperator{\Gr}{Gr}
\DeclareMathOperator{\GL}{GL}
\DeclareMathOperator{\SL}{SL}
\DeclareMathOperator{\Hom}{Hom}
\DeclareMathOperator{\coker}{coker}
\DeclareMathOperator{\im}{Im}
\DeclareMathOperator{\Id}{Id}
\DeclareMathOperator{\Spec}{Spec}
\DeclareMathOperator{\Supp}{Supp}
\DeclareMathOperator{\Pic}{Pic}
\DeclareMathOperator{\Br}{Br}
\DeclareMathOperator{\Quot}{Quot}
\DeclareMathOperator{\SheafHom}{\mathcal{H\kern -1pt}\textit{om}} %For the local (sheaf) Hom
\DeclareMathOperator{\NS}{NS}
\DeclareMathOperator{\Nef}{Nef}
\DeclareMathOperator{\Mov}{Mov}
\DeclareMathOperator{\Pos}{Pos}
\DeclareMathOperator{\Amp}{Amp}
\DeclareMathOperator{\Ext}{Ext}
\DeclareMathOperator{\Tor}{Tor}
\DeclareMathOperator{\ext}{ext}
\DeclareMathOperator{\ch}{ch}
\DeclareMathOperator{\td}{td}
\DeclareMathOperator{\D}{D}
\DeclareMathOperator{\Stab}{Stab}
\def\dar[#1]{\ar@<2pt>[#1]\ar@<-2pt>[#1]}
\DeclareMathOperator{\Coh}{\mathbf{Coh}}
\newcommand\matC{\mathbb{C}}
\newcommand\matH{\mathbb{H}}
\newcommand\matP{\mathbb{P}}
\newcommand\matR{\mathbb{R}}
\newcommand\matZ{\mathbb{Z}}
\newcommand\calA{\mathcal{A}}
\newcommand\calB{\mathcal{B}}
\newcommand\calC{\mathcal{C}}
\newcommand\calE{\mathcal{E}}
\newcommand\calF{\mathcal{F}}
\newcommand\calG{\mathcal{G}}
\newcommand\calH{\mathcal{H}}
\newcommand\calI{\mathcal{I}}
\newcommand\calM{\mathcal{M}}
\newcommand\calO{\mathcal{O}}
\newcommand\calP{\mathcal{P}}
\newcommand\calR{\mathcal{R}}
\newcommand\calS{\mathcal{S}}
\newcommand\calU{\mathcal{U}}
\newcommand\calW{\mathcal{W}}
\newcommand\calY{\mathcal{Y}}
\newcommand{\fonction}[5]{\begin{array}{lrcl} 
		#1: & #2 & \longrightarrow & #3 \\
		& #4 & \longmapsto & #5 \end{array}}
\newcommand{\mysetminusD}{\hbox{\tikz{\draw[line width=0.6pt,line cap=round] (3pt,0) -- (0,6pt);}}}
\newcommand{\mysetminusT}{\mysetminusD}
\newcommand{\mysetminusS}{\hbox{\tikz{\draw[line width=0.45pt,line cap=round] (2pt,0) -- (0,4pt);}}}
\newcommand{\mysetminusSS}{\hbox{\tikz{\draw[line width=0.4pt,line cap=round] (1.5pt,0) -- (0,3pt);}}}
\newcommand{\mysetminus}{\mathbin{\mathchoice{\mysetminusD}{\mysetminusT}{\mysetminusS}{\mysetminusSS}}}
\title[Moduli of sheaves on Fano and K3 of genus $9$]{Moduli spaces of sheaves on Fano threefolds and K3 surfaces of genus $9$}
\date{}
\author{Dominique Mattei}
\address{Universit\"at Bonn, Endenicher Allee 60, 53115 Bonn, Germany.}
\email{dmattei@math.uni-bonn.de}
\begin{document}
		\maketitle
		
		\begin{abstract} 
			A complex smooth prime Fano threefold $X$ of genus $9$ is related via projective duality to a quartic plane curve $\Gamma$. We use this setup to study the restriction of rank $2$ stable sheaves with prescribed Chern classes on $X$ to an anticanonical $K3$ surface $S\subset X$. Varying the threefold $X$ containing $S$ gives a rational Lagrangian fibration
			$\calM_S[2,1,3] \dashrightarrow \matP^3$
			%\begin{center}
			%	\begin{tikzcd}
				%		\calM_S[2,1,3] \ar[r, dashed] & \matP^3
				%	\end{tikzcd}
			%\end{center}
			with generic fibre birational to the moduli space $\calM_X(2,1,7)$ of sheaves on $X$.
			Moreover, we prove that this rational fibration extends to an actual fibration on a birational model $\calM$ of $\calM_S[2,1,3]$.
			
			In a last part, we use Bridgeland stability conditions to exhibit all $K$-trivial smooth birational models of $\calM_S[2,1,3]$, which consist in itself and $\calM$. We prove that these models are related by a flop, and we describe the positive, movable and nef cones of $\calM_S[2,1,3]$.
		\end{abstract}

		\section{Introduction}
		
		Let $X$ be a smooth complex projective Fano threefold of index $1$ and Picard group generated by an ample divisor $H_X$. The moduli spaces $\calM_X(2,1,c_2,c_3)$ of semistable sheaves $F$ with rank $2$ and Chern classes $c_1(F)=c_1(H_X)$, $c_2(F)=c_2$, $c_3(F)=c_3$ attracts a lot of attention since the pioneer work of Barth for $X=\matP^3$ \cite{BarthPropStableRank2}. Using homological methods, Brambilla and Faenzi \cite{BrambillaFaenzig7} construct and describe a generically smooth and irreducible component $M_X(d)\subset \calM_S(2,1,d)\coloneqq \calM_S(2,1,d,0)$, whose general element $F$ is locally free and satisfies $\Ext^2(F,F)=0$.
		
		The very general hyperplane section $S\in|H_X|$ is a K3 surface with Picard group generated by the restriction $H_S\coloneqq H_X|_S$. Based on a result of Tyurin (see \cite{BeauvilleFanoThreefoldK3}), the authors  in \cite{BrambillaFaenzig7} show that there exists some open $M_X(d)^o\subset M_X(d)$ for which the restriction
		$$\mathsf{res}\colon M_X(d)^o \to \calM_S(2,1,d), \ F\mapsto F_S$$
		is an immersion (i.e. a morphism with injective differential), and its image is a (possibly singular) Lagrangian subvariety with respect to the symplectic structure on $\calM_S(2,1,d)$.
		
		For the specific case of genus $g(X)\coloneqq H_X^3/2+1 =9$, the Fano $X$ is related to a quartic plane curve $\Gamma$ by \textit{Homological Projective Duality} (see \S \ref{sectionHPD}), in particular there is an an embedding
		$\phi\colon\D^b(\Gamma) \hookrightarrow \D^b(X)$ between the derived categories of $\Gamma$ and $X$.
		In \cite{BrambillaFaenzig9}, the authors use the right adjoint $\phi^!\colon \D^b(X) \to \D^b(\Gamma)$ of this functor to prove that the map 
		$$\calM_X(2,1,7) \to \Pic^2(\Gamma), \ F\mapsto \phi^!F$$
		gives an isomorphism of $\calM_X(2,1,7)$ with the blow-up of $\Pic^2(\Gamma)$ along a curve isomorphic to the Hilbert scheme $\calH_1^0(X)$ of lines contained in $X$; the exceptional divisor consists of the sheaves of $\calM_X(2,1,7)$ which are not globally generated. We use this construction to prove our first main theorem. For short, let us denote $\calM_X\coloneqq \calM_X(2,1,7)$ and $\calM_S\coloneqq \calM_S(2,1,7)$.
		
		\begin{theorem}[= Theorem~\ref{thmresinj} and Theorem~\ref{thmrestotal}]
			The restriction $\mathsf{res}\colon\calM_X \to \calM_S$ is injective on the set of globally generated sheaves (in particular, it is generically injective). The image $\mathsf{res}(\calM_X)$ is a Lagrangian subvariety of $\calM_S$ with finitely many singular points, each of which have exactly $2$ preimages in $\calM_X$.
		\end{theorem}
		
		In \cite{BeauvilleFanoThreefoldK3}, Beauville asks (in a more general context) if the subvariety $\mathsf{res}(\calM_X)$ moves in a family on $\calM_S$, and if this family describes a (rational) Lagrangian fibration $\calM_S \to B$. This is the content of \S \ref{sectionLagrangianFibration}.
		
		\begin{theorem}[= Corollary~\ref{corratlagfib}]\label{ThmIntroRatLagFib}
			The Fano threefold $X$ moves in a $3$-dimensional open family $\frak{X}\to \calW\subset \matP^3$ parametrizing smooth Fanos containing $S$. There exists a rational Lagrangian fibration $\calM_S\dashrightarrow \matP^3$ such that the fibre over a general point $[X_t]\in\matP^3$ is the open of globally generated sheaves in $\calM_{X_t}$.
		\end{theorem}
		
		In fact, we prove that the relative moduli space $\calM_{\mathfrak{X}/\calW}$ of the family $\mathfrak{X}\to \calW$ is birational to $\calM_S$. The map $\calM_X\dashrightarrow \matP^3$ is given by sending a globally generated sheaf $F\in\calM_S$, which is the restriction of some sheaf in $\calM_{X_t}$, to $[X_t]\in\matP^3$.
		
		Homological projective duality associates to the K3 surface $S$ another polarized K3 surface $(S',H')$, equipped with a sheaf of Azumaya algebras $\calA$ (or equivalently with a Brauer class $\alpha\in\Br(S')$). This permits to define a moduli space of semistable $\alpha$-\textit{twisted} torsion sheaves $\calM_{(S',\alpha)}\coloneqq \calM_{(S',\alpha)}(0,H',2)$, equipped with a structure of Lagrangian fibration
		$$\calM_{(S',\alpha)} \to \matP^3=|H'|, \ L \mapsto \Supp(L),$$
		such that the fibre over a smooth curve $[\Gamma]\in |H'|$ is isomorphic to $\Pic^2(\Gamma)$. In fact, the smooth curves in $|H'|$ are precisely the plane quartics HP-dual to the Fanos in $\mathfrak{X}$.
		The following result can be thought as a \textit{compactification} of the rational fibration on $\calM_S$.
		
		\begin{theorem}[= Theorem~\ref{thmactuallagfibrationbiratmodel}]
			The linear system $|H'|$ naturally identifies with the projective space $\matP^3$ of Theorem~\ref{ThmIntroRatLagFib}, and there exists a birational map $\calM_S \dashrightarrow \calM_{(S',\alpha)}$ over $\matP^3$.
		\end{theorem}
		
		It is known that the \textit{intermediate Jacobian} of $X$ identifies with the Jacobian $\Pic^0(\Gamma)$, for $\Gamma$ the HP-dual curve of $X$. In particular, the fibration $\calM_{(S',\alpha)}\to\matP^3$ can be thought as a compactification of the \textit{twisted} intermediate Jacobian fibration associated the family $\mathfrak{X}\to \calW$.
		
		Eventually, we use the remarkable results of Bayer and Macr\`i \cite{BMMMPwallcrossing} to study the birational models of $\calM_S$. 
		
		\begin{theorem}[= Theorem~\ref{ThmKtrivialBirModels}]
			The moduli spaces $\calM_S$ and $\calM_{(S',\alpha)}$ are the only $2$ birational models of $\calM_S$. They are not isomorphic and are related by a flop along a $\matP^2$-bundle over $S$.
		\end{theorem}
		
		In addition, we obtain a complete description of the positive, movable and nef cones of $\calM_S$, see \S \ref{SectionMovNefMS}.

		\subsection*{Notations and conventions}
		
		Throughout the paper, we work over $\matC$. By variety, we mean an integral separated scheme of finite type. By the word sheaf, resp. vector bundle, we mean a coherent sheaf, resp. finite rank locally free sheaf.
		We denote $i_{RT}\colon  R \hookrightarrow T$ a closed immersion between two schemes $R$ and $T$, and $F_R$ the restriction to $R$ of a sheaf $F$ on $T$. We denote $\D^b(R)\coloneqq \D^b(\Coh(R))$ the bounded derived category of coherent sheaves on $R$.
		The dual of a sheaf $F$ is denoted $F^*$, and the derived dual is denoted $F^\vee$. Recall that both coincide when $F$ is locally free.
		For any $i\in\matZ$ and sheaves $F,G$, we denote $\ext^i(F,G)\coloneqq \dim \Ext^i(F,G)$.

		\subsection*{Acknowledgements}The results of this article are part of my PhD project. I would like to thank my advisor Marcello Bernardara for his invaluable guidance, encouragement, and support throughout this work. I am grateful to Kota Yoshioka for his helpful feedback and for pointing out a mistake in the the first version of the paper. I deeply thank Daniele Faenzi for answering my numerous questions and for valuable conversations, and Arend Bayer, Daniel Huybrechts and Emanuele Macr\`i for reading and commenting previous version of this paper. I also thank \'Angel David R\'ios Ortiz for interesting questions and remarks. I am supported by the ERC Synergy Grant HyperK,	agreement ID 854361.

		\section{Preliminaries}\label{sectionsetup}

		\subsection{The Fano of genus $9$}\label{setupFanog9}
		
		We start with a prime Fano threefold of genus $9$ constructed as follows (see \cite{IlievRanestadGeometryOfLG36} for more details).
		First, we consider $\Sigma=LG(3,6)$, the Lagrangian Grassmannian of $3$-dimensional subspaces of a $6$-dimensional vector space $V$ which are isotropic with respect to a skew-symmetric $2$-form $\omega$. The manifold $\Sigma$ embeds in $\matP^{13} = \matP V_{14}$, with $V_{14}=\ker(\Lambda^2V\xrightarrow{\omega} \matC)$. Now we define $X$ as a $3$-codimensional linear section of $\Sigma$, that is
		$$X\coloneqq \Sigma\cap \matP V_{11}$$
		for $V_{11}\subset V_{14}$ an $11$-dimensional subvector space.
		The Fano $X$ has Picard group $\Pic(X)=\langle H_X \rangle$ with $H_X$ a hyperplane section in $\matP V_{11}$, and $-K_X=H_X$.
		A very general hyperplane section $S$ of $X$ is a smooth $K3$ surface of genus $9$ polarized by the restriction $H_S$ of $H_X$ to $S$, with $\Pic(S)=\langle H_S \rangle$.
		
		The manifold $\Sigma$ is equipped with a tautological homogeneous rank $3$ subbundle $\calU$. As we will principally study $X$, we denote $\calU$ again its restriction to $X$, and $\calU_S$ its restriction to $S$. By \cite[Lem. 3.1]{BrambillaFaenzig9}, both $\calU$ and $\calU_S$ are $\mu$-stable and  $c_1(\calU)=-H_X$.

		\subsection{Cohomology and moduli spaces of sheaves}\label{SectionCohomModuliSheaves}
		
		The integral cohomology groups $H^{k,k}(X)_\matZ$ of $X$ are generated by the hyperplane class $H_X$ ($k=1$), the class of a line $L_X\subset X$ ($k=2$) and the class of a closed point $P_X\in X$ ($k=3$). For now on, we will denote
		$$\calM_X(r,c_1,c_2,c_3)$$
		the coarse moduli space of Gieseker semistable sheaves on $X$ with rank $r$ and Chern classes $c_i$, $i=1,\dots,3$. If $c_3$ vanishes, we omit it from the notation. Moreover, we will often write integers instead of Chern class as they all are integral multiples of the corresponding generator of $H^{k,k}(X)_\matZ$.

		On $K3$ surfaces, it is more conveniant to use \textit{Mukai vectors} (we refer to \cite[Ch. 10]{HuybrechtsLecturesK3} for the general theory). That is, we denote $\calM_S[r,c,s]$ the moduli space of semistable sheaves on $S$ with rank $r$ and Chern classes $c_1(F)=cH_S$, $c_2(F)=\frac{c_1(F)^2}{2}-s+r$ (we use the bracket notation to avoid confusion).
		Recall that when $[r,c,s]$ is a primitive Mukai vector, the moduli space $\calM_S[r,c,s]$ is a \textit{hyperk\"ahler manifold} deformation equivalent to a Hilbert scheme of point on a $K3$ surface if not empty \cite{YoshiokaStabFMTransf}.
		
		In the present paper, we will focus our attention on $\calM_X(2,1,7)$ and $\calM_S[2,1,3]$.

		\vspace{1\baselineskip}

		\subsection{Technical lemmas}
		
		We gather here some useful lemmas.

		\begin{lemma}\label{lemmahigherpullbacktorsionfree}
			Let $X$ be a smooth integral projective variety, $S\subset X$ a smooth integral hypersurface. Let $F\in\Coh(X)$ be a coherent pure sheaf with $\dim \left(\Supp(F)\cap S\right) < \dim F$. Let $i\colon S\hookrightarrow X$ be the closed immersion. Then $L^ki^*F=0=\Tor_k(F,\calO_S)$ for all $k>0$.
		\end{lemma}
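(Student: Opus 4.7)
My plan is to reduce everything to the short locally free resolution
\begin{equation*}
0 \to \mathcal{O}_X(-S) \xrightarrow{\,\cdot s\,} \mathcal{O}_X \to i_*\mathcal{O}_S \to 0,
\end{equation*}
available since $S\subset X$ is a smooth Cartier divisor with local equation $s$. Tensoring with $F$ computes $Li^*F$; as the resolution has length one, $L^k i^* F=0$ for every $k\geq 2$ is automatic. The only real content is the case $k=1$, which amounts to showing that the kernel
\begin{equation*}
K := \ker\bigl(F(-S)\xrightarrow{\,\cdot s\,} F\bigr)
\end{equation*}
vanishes, i.e.\ that multiplication by $s$ is injective on the sheaf $F$.

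To do this, I would observe two things about $K$. First, away from $S$ the section $s$ is invertible, so $\Supp K\subseteq S$ set-theoretically. Second, $K$ is a subsheaf of $F(-S)$, and since twisting by an invertible sheaf preserves the set of associated points, $F(-S)$ is still pure of dimension $d:=\dim\Supp F$; any nonzero subsheaf of a pure sheaf is then itself pure of dimension $d$. Combining these two observations, if $K\neq 0$, each of its associated points is the generic point of a $d$-dimensional irreducible component of $\Supp F$ entirely contained in $S$. Under the hypothesis $\Supp F\nsubseteq S$ this is impossible, so $K=0$.

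The main obstacle I foresee is the careful matching of the purity hypothesis with the support condition in the last step: I need to read $\Supp F\nsubseteq S$ as ruling out any top-dimensional component of $\Supp F$ from lying inside $S$, which is the only interpretation compatible with reducible support. This is harmless for the later applications in the paper: the sheaves to which the lemma is applied are torsion-free on the integral variety $X$, so $\Supp F=X$ is irreducible and the contradiction is immediate.
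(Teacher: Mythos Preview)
Your argument is correct and considerably simpler than the paper's. The paper works locally, chooses an arbitrary free resolution of $F$ as an $A$-module, and proves by induction on the position in the resolution that tensoring with $A/I$ preserves exactness; the base case uses purity in exactly the way you do, and the inductive step is a direct diagram chase using that $A$ is a domain. Your use of the two-term locally free resolution $0 \to \mathcal{O}_X(-S) \to \mathcal{O}_X \to i_*\mathcal{O}_S \to 0$ bypasses all of this: it makes the vanishing of $L^ki^*F$ for $k\geq 2$ automatic and reduces $k=1$ to the single kernel computation you carry out. This is the more natural approach, and is essentially what the paper itself does in the special torsion-free case recorded as Lemma~\ref{lemmatorsubvartorsionfree}.

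Your caveat about reducible support is well taken and applies equally to the paper's proof: there the key step reads ``since $\Supp(F)\nsubseteq S$ we have $\dim(V(\Ann_A([b]))) < \dim(\Supp(F))$'', which is the same inference and has the same failure mode when some top-dimensional component of $\Supp F$ lies in $S$. As you note, every application in the paper is to torsion-free sheaves on the integral variety $X$, so the issue never arises.
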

		
		\begin{proof}
			Note that $L^ki^*F=0$ for all $k>0$ if and only if $Li^*F$ is a sheaf. Since $i$ is a closed immersion, $Ri_*=i_*$ do not need to be derived, therefore $Li^*F$ is a sheaf if and only if $i_*Li^*F$ is a sheaf. By the projection formula, we have $i_*Li^*F\simeq F\otimes^L\calO_S$. Tensoring the exact sequence
			\begin{eqnarray}\label{EqBasicDivisorExSeq}
				0 \to \calO_X(-S) \to \calO_X \to \calO_S \to 0
			\end{eqnarray}
			by $F$, to prove both statements of the lemma we are reduced to show that $m\colon  F(-S) \to F$ is injective. Recall that a sheaf is pure if and only if all its associated points have the same dimension, in particular we can work locally and assume that $\calO_X(-S)$ is generated by a global function $f$ vanishing on $S$. Hence the kernel of
			$$F \xrightarrow{\times f} F$$
			is a subsheaf whose support $Z$ is contained in $\Supp(F)\cap S$, which have dimension smaller than the dimension of $\Supp(F)$ by assumptions. By purity of $F$, $Z$ must be empty, so $m$ is injective.
		\end{proof}
		
		\begin{remark}\label{RmkComputationsExtgroupsTools}
			Lemma~\ref{lemmahigherpullbacktorsionfree} in very useful when it comes to compute $\Ext$-groups between sheaves. Indeed, pick two sheaves $F,G$ as in the statement. The tensor product $-\otimes F$ applied to (\ref{EqBasicDivisorExSeq}) gives an exact sequence
			\begin{eqnarray}\label{EqRestrictionTemplate}
				0\to F(-S) \to F \to F_S \to 0
			\end{eqnarray} 
			and moreover $\Ext^i_X(G,F_S)=\Ext^i_S(G_S,F_S)$ by (derived) adjunction. The long exact sequence obtained by applying $R\Hom_X(G,-)$ to (\ref{EqRestrictionTemplate}) permits to relate the Ext groups between $G$ and $F$ and the ones between $G_S$ and $F_S$.
		\end{remark}

		\begin{theorem}[Hoppe's criterion, \cite{bibhoppecriterion}]\label{hoppecriterion}
			Let $X$ be smooth projective variety over $\matC$ with Picard group generated by an ample line bundle $\calO_X(1)$. For any vector bundle $E$ of rank $r$ on $X$, we have
			\begin{itemize}
				\item If $H^0(X,(\Lambda^qE)_{\textnormal{norm}}(-1))=0$ for $0<q<r$, then $E$ is $\mu$-semistable.
				\item If $H^0(X,(\Lambda^qE)_{\textnormal{norm}})=0$ for $0<q<r$, then $E$ is $\mu$-stable, and the converse is true when $r=2$.
			\end{itemize}
			Here $E_{\textnormal{norm}}\coloneqq E(-k_E)$ with $k_E\in\matZ$ unique so that $-r+1 \leq c_1(E_{\textnormal{norm}})\leq 0$.
		\end{theorem}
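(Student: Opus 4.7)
The plan is to recast $\mu$-(semi)stability of $E$ as a cohomological vanishing condition on each exterior power $\Lambda^q E$, using that $\mathrm{Pic}(X) = \mathbb Z\cdot\mathcal O_X(1)$.

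First I would reduce to line subsheaves of exterior powers. Given any rank-$s$ subsheaf $F\subset E$ with $0 < s < r$, its saturation $\bar F\subset E$ satisfies $\mathrm{rk}(\bar F)=s$ and $\mu(\bar F)\geq \mu(F)$, so one may assume $F$ is saturated. Taking the determinant of the inclusion $F \hookrightarrow E$ yields a line subsheaf $\det F = \Lambda^s F \hookrightarrow \Lambda^s E$ of the same degree as $F$, hence of slope $\mu(F)$. Consequently, if $E$ fails $\mu$-(semi)stability, then some $\Lambda^q E$ with $0 < q < r$ carries a line subsheaf whose slope exceeds (resp.\ equals or exceeds) $q\mu(E)$.

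Next I would perform the cohomological translation. Since the Picard group is cyclic, every line subsheaf of $\Lambda^q E$ is of the form $\mathcal O_X(k)$ for some $k\in\mathbb Z$, and such an inclusion exists iff $H^0(\Lambda^q E(-k))\neq 0$. A direct computation from the defining condition $-\binom{r}{q}+1 \leq c_1((\Lambda^q E)_{\mathrm{norm}}) \leq 0$ gives $k_{\Lambda^q E} = \lceil q\mu(E)\rceil$. Thus the existence of a line subsheaf of $\Lambda^q E$ whose slope strictly exceeds $q\mu(E)$, respectively reaches $q\mu(E)$, is detected exactly by nonvanishing of $H^0((\Lambda^q E)_{\mathrm{norm}}(-1))$, respectively $H^0((\Lambda^q E)_{\mathrm{norm}})$, yielding the two sufficient criteria of the theorem.

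For the converse when $r = 2$, only $q = 1$ is involved and $\Lambda^1 E = E$. If $H^0(E_{\mathrm{norm}}) \neq 0$, a nonzero section provides an injection $\mathcal O_X \hookrightarrow E_{\mathrm{norm}}$; saturating the image inside the locally free sheaf $E_{\mathrm{norm}}$ produces a line subsheaf $\mathcal O_X(k) \hookrightarrow E_{\mathrm{norm}}$ with $k \geq 0$. Twisting back by $\mathcal O_X(k_E)$ yields a line subsheaf $\mathcal O_X(k+k_E) \hookrightarrow E$ of degree $k + k_E \geq k_E \geq \mu(E)$, contradicting $\mu$-stability.

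The main technical subtlety lies in the cohomological translation: one has to be careful about whether $q\mu(E)$ is an integer, as this dictates whether the relevant vanishing threshold is attained by $(\Lambda^q E)_{\mathrm{norm}}$ itself or by its twist $(\Lambda^q E)_{\mathrm{norm}}(-1)$. The saturation step in the reduction is standard but uses the smoothness of $X$ to ensure that saturations inside a locally free sheaf remain well-behaved.
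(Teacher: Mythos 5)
The paper does not prove this statement: it is quoted verbatim from the cited reference \cite{bibhoppecriterion} and used later only through its rank-$2$ stability part and the converse (e.g.\ in Lemma \ref{LemmaStableSheavesAreMuStableAndRestriction}). So the only comparison available is with the standard argument, and your proposal does follow that standard route: saturate a destabilizing subsheaf $F\subset E$ of rank $q$, pass to $\det F\hookrightarrow \Lambda^q E$ (here one should really factor $\Lambda^qF\to\Lambda^qE$ through the reflexive hull $(\Lambda^qF)^{**}=\det F$, using that $\Lambda^qE$ is locally free and that the cokernel of $\Lambda^qF\to\det F$ is supported in codimension $\ge 2$ on the smooth $X$ --- otherwise you only get a map from $\calO_X(d)\otimes I_Z$, which has no global sections when $Z\neq\emptyset$), and translate into a section of a twist of $\Lambda^qE$. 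With that gloss filled in, your treatment of the stability bullet and of the converse for $r=2$ is correct, and these are exactly the parts of the theorem the paper actually invokes.

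There is, however, a genuine gap in your semistability bullet, precisely at the ``subtlety'' you name but do not resolve. Writing $\mu=c_1(E)/r$ so that $k_{\Lambda^qE}=\lceil q\mu\rceil$, a strict destabilizer of rank $q$ produces a line subsheaf $\calO_X(d)\subset\Lambda^qE$ with $d>q\mu$, hence $d\ge\lfloor q\mu\rfloor+1$. When $q\mu\in\matZ$ this equals $\lceil q\mu\rceil+1$ and you indeed land in $H^0((\Lambda^qE)_{\textnormal{norm}}(-1))$; but when $q\mu\notin\matZ$ the least admissible $d$ is $\lceil q\mu\rceil=k_{\Lambda^qE}$, so the destabilizer is only detected by $H^0((\Lambda^qE)_{\textnormal{norm}})\neq 0$ and is \emph{not} detected by the $(-1)$-twist. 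Your claim that slope $>q\mu(E)$ is ``detected exactly by'' $H^0((\Lambda^qE)_{\textnormal{norm}}(-1))\neq 0$ is therefore false in that case, and the first bullet cannot be closed by this argument as written: already $E=\calO\oplus\calO(1)$ on $\matP^2$ has $H^0(E_{\textnormal{norm}}(-1))=H^0(\calO(-2)\oplus\calO(-1))=0$ yet is unstable. The honest fix is to record the case distinction explicitly: for ranks $q$ with $r\mid qc_1(E)$ the relevant vanishing is $H^0((\Lambda^qE)_{\textnormal{norm}}(-1))=0$, while for the remaining $q$ one must require $H^0((\Lambda^qE)_{\textnormal{norm}})=0$ (which is the stability-type condition). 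This caveat does not affect anything the paper uses, but your proof of the first bullet is incomplete without it.
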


		\subsection{Homological Projective Duality}\label{sectionHPD}

		In this section we will only consider the application of \textit{Homological Projective Duality} (HPD) in our case. For the general theory, we refer to \cite{KuznetsovHPD}.

		In \cite{KuznetsovHyperplaneSections}, the author proves (so-called \textit{incomplete}) HP duality for $\Sigma=LG(3,6)$. Namely, we consider the maps 
		$$f\colon \Sigma \hookrightarrow \matP V_{14} = \matP^{13} \text{ and } Y\coloneqq \Sigma^\vee \smallsetminus \mathbf{Z}\hookrightarrow \matP V_{14}^\vee ,$$
		where $\Sigma^\vee$ is the (classical) projective dual variety of $\Sigma$, which is a quartic hypersurface singular along a subvariety $\mathbf{Z}\subset \Sigma^\vee$ of codimension $3$.
		
		We obtain the following semiorthogonal decompositions, denoting $\Sigma_j\coloneqq \Sigma\cap L$, resp. $Y_j = Y \cap L^\perp$, for an admissible linear subspace $L\subset V$ (i.e.  that satisfies $\dim \Sigma\cap \matP L=\dim \Sigma-\operatorname{codim} L$) of dimension $j$:
		
		\begin{itemize}
			\item $\D^b(\Sigma) = \langle \calO_\Sigma(1), \calU_\Sigma^*(1), \calO_\Sigma(2), \calU_\Sigma^*(2), \calO_\Sigma(3), \calU_\Sigma^*(3), \calO_\Sigma(4), \calU_\Sigma^*(4)  \rangle$
			\item $\D^b(\Sigma_{11})=\langle \D^b(Y_{11}), \calO_{\Sigma_{11}}(1), \calU^*_{\Sigma_{11}}(1) \rangle$
			\item $\D^b(\Sigma_{10}) = \D^b(Y_{10},\calA_Y)$.
		\end{itemize}
		
		In these cases, $\Sigma_{11}$ is the Fano threefold $X$ as in \S \ref{setupFanog9}, $Y_{11}$ is a plane quartic, and $\Sigma_{10}$ and $Y_{10}$ are K3 surfaces of degree $16$ and $4$ respectively. Beware that $\D^b(Y_{10},\calA_Y)$ is the derived category of $\calA_Y$-module with respect to a sheaf of Azumaya algebra $\calA_Y$ over $Y$. Equivalently, we can use the equivalence $\D^b(Y_{10},\calA_Y) \simeq \D^b(Y_{10},\alpha)$ with the derived category of coherent sheaves \textit{twisted} by a Brauer class $\alpha\in\Br(Y_{10})$ provided by \cite{Caldararuthesis}.

		\begin{remark}\label{rmkfamilyHPD}
			Later in the text we will need the following observation. One could ask how the functor $\phi_r\colon \D^b(Y_r,\calA_Y) \to \D^b(\Sigma_r)$ (with $r$ for which it makes sense) varies when changing the linear section $L$. In fact, in \cite{KuznetsovHyperplaneSections} the author proves similar semiorthogonal decompositions when replacing $\Sigma_{r}$ with $\calS_{r}$ and $Y_{r}$ with $\calY_{r}$, where $\calS_{r}\subset \Sigma\times \Gr(r,V^\vee)$ and $\calY_r \subset Y\times \Gr(r,V^\vee)$ are the universal families of linear sections.
			
			In particular, the kernels $\widetilde{\calE}_r$ of the functors between $\D^b(\calS_r)$ and $\D^b(\calY_r,\calA_Y)$ are the pullback of an object $\calE\in\D^b(Q(X,Y))$ through the maps $\calS_r\times_{\Gr(r,V^\vee)} \calY_r \to Q(\Sigma,Y)$, where $Q(\Sigma,Y)$ is the incidence quadric of couples $(s,y)\in \Sigma\times Y$ with $f(s)\in g(y)$. Finally, the kernels $\calE_r$ obtained by base change $\Spec \matC \to \Gr(r,V^\vee)$, which correspond to the choice of a linear section, give the semiorthogonal decompositions defined above.
		\end{remark}
		
		%\begin{remark}
		%In \cite{BrambillaFaenzig9}, the authors prove that $\Gamma\coloneqq Y_{10}$ is isomorphic to the fine moduli space $\calM_X(2,1,6)$ on $X$, and the functor $\phi\colon \D^b(\Gamma) \to \D^b(X)$ is isomorphic to the Fourier-Mukai functor with kernel the universal sheaf of $\calM_X(2,1,6)$.
		%
		%\end{remark}

		\subsection*{Notations for the HPD}\label{sectionnotationXGammaHPD}

		We introduce some notation for the next parts. 
		\begin{itemize}

			\item We denote $X= \Sigma_{11}$ the Fano threefold, $\Gamma\coloneqq Y_{11}$ the plane quartic curve, $S\coloneqq\Sigma_{10}$ and $S'\coloneqq Y_{10}$ the K3 surfaces. Note that $S$, resp. $\Gamma$, is a hyperplane section of $X$, resp. $S'$.
			
			\item We denote $\calE$ the object in $\D^b(Q(\Sigma,Y))$ which gives the HP-duality and by $\calE_{11}$, resp. $\calE_{10}$ its restriction to $X\times \Gamma$, resp. $S\times S'$.
			
			\item We denote by
			\begin{eqnarray*}
				\phi_{11} &:& \D^b(\Gamma) \hookrightarrow \D^b(X) \\
				\phi_{10} &:& \D^b(S',\alpha) \xrightarrow{\sim} \D^b(S).
			\end{eqnarray*}
			the Fourier-Mukai functors with kernel $\calE_{11}$ and $\calE_{10}$ respectively obtained by HP-duality. Note that $\phi_{11}$ is fully faithful and $\phi_{10}$ is an equivalence.
		\end{itemize}

		We need the following lemma which relates the different "paths" between the derived categories $\D^b(X)$ and $\D^b(S)$, as it reads on diagram
		\begin{center}
			\begin{tikzcd}
				\D^b(\Gamma) \ar[r,hook,"\phi_{11}"] \ar[d,"Ri_{\Gamma S',*}"'] & \D^b(X) \ar[d,"Li_{SX}^*"] \\
				\D^b(S',\alpha) \ar[r,"\sim","\phi_{10}"'] & \D^b(S)
			\end{tikzcd}
		\end{center}
		
		\begin{lemma}\label{lemisoway}
			We have an isomorphism of functors
			$$Li_{SX}^{*}\circ \phi_{11} \simeq \phi_{10} \circ (Ri_{\Gamma S'})_*$$
			from $\D^b(\Gamma)$ to $\D^b(S)$.
		\end{lemma}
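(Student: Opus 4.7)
The plan is to rewrite both sides as Fourier--Mukai transforms $\D^b(\Gamma)\to \D^b(S)$ and identify their kernels inside $\D^b(\Gamma\times S)$. Since $\phi_{11}$, $\phi_{10}$, $(Ri_{\Gamma S^{'}})_{*}$ and $Li_{SX}^{*}$ are all of Fourier--Mukai type (the last two with the structure sheaves of the respective graphs as kernels), the standard composition-of-kernels formula applies on both sides of the alleged isomorphism.

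Applied to $Li_{SX}^{*}\circ \phi_{11}$, this formula yields the kernel
$$\pi_{\Gamma S,*}\bigl(\pi_{\Gamma X}^{*}\calE_{11}\otimes^{L}\pi_{XS}^{*}\calO_{\Gamma_{i_{SX}}}\bigr)\in \D^b(\Gamma\times S),$$
and since the second factor is a graph kernel the pushforward--tensor collapses to an honest pullback, giving $(\Id_\Gamma\times i_{SX})^{*}\calE_{11}$; in other words, the derived restriction of $\calE_{11}$ to $\Gamma\times S\subset \Gamma\times X$. Analogously, I would compute that $\phi_{10}\circ (Ri_{\Gamma S^{'}})_{*}$ has kernel $(i_{\Gamma S^{'}}\times \Id_S)^{*}\calE_{10}$, the derived restriction of $\calE_{10}$ to the same subvariety $\Gamma\times S\subset S^{'}\times S$.

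It then remains to identify these two restrictions. By Remark~\ref{rmkfamilyHPD}, both $\calE_{11}$ and $\calE_{10}$ arise as pullbacks, along the natural embeddings $\Gamma\times X\hookrightarrow Q(\Sigma,Y)$ and $S^{'}\times S\hookrightarrow Q(\Sigma,Y)$, of a single object $\calE\in \D^b(Q(\Sigma,Y))$. Because the linear sections $L_{10}\subset L_{11}$ defining $S\subset X$ and $\Gamma\subset S^{'}$ are nested, the two composite embeddings $\Gamma\times S\hookrightarrow Q(\Sigma,Y)$ factoring respectively through $\Gamma\times X$ and through $S^{'}\times S$ coincide. Therefore both kernels computed above are identified with $\calE|_{\Gamma\times S}$, and the two Fourier--Mukai transforms agree. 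The main obstacle I anticipate is carefully verifying this geometric compatibility for the universal family, together with the Tor-vanishing statements needed to guarantee that the two routes of derived pullback genuinely compute the same object in $\D^b(\Gamma\times S)$; once this is in place, the conclusion is a direct application of the Fourier--Mukai composition formula and base change.
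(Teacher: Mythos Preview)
Your proposal is correct and follows essentially the same route as the paper: both compute the Fourier--Mukai kernels of the two composites as $L(\Id_\Gamma\times i_{SX})^*\calE_{11}$ and $L(i_{\Gamma S^{'}}\times \Id_S)^*\calE_{10}$, and then identify each with $Lj^*\calE$ for the common map $j:\Gamma\times S\to Q(\Sigma,Y)$. The paper simply cites \cite{HuybrechtsFMTransform}, Exercise~5.12 (adapted to the twisted setting via \cite{CanonacoStellariTwistedFMFunctors}) for the kernel computation; your anticipated Tor-vanishing issue is in fact a non-issue, since the identification only uses functoriality of derived pullback along the composable maps $\Gamma\times S\hookrightarrow \Gamma\times X\to Q(\Sigma,Y)$ and $\Gamma\times S\hookrightarrow S^{'}\times S\to Q(\Sigma,Y)$.
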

		
		\begin{proof}

			It is a consequence of the adaptation of \cite[Ex. 5.12]{HuybrechtsFMTransform} to the case of twisted sheaves. The key point is that all involved functors are Fourier-Mukai (see \cite{CanonacoStellariTwistedFMFunctors}).
			%\begin{lemma}
			%Let $X,Y$ be smooth projective varities, let $\calP\in\D^b(X\times Y)$, and consider the integral functor $\phi_\calP\colon \D^b(X) \to \D^b(Y)$.
			%\begin{eqnarray}
			%&&Li_{SX}^{*}\circ \phi_{11}=\phi_\calA \\
			%&&\phi_{10} \circ (i_{\Gamma S'})_* = \phi_\calB,
			%\end{eqnarray}
			%with $A\simeq 
			%\end{lemma}
			
			In our case, we get $Li_{SX}^{*}\circ \phi_{11}\simeq \phi_\calA$ with $\calA\simeq  L(\Id_\Gamma \times Li_{SX})^*\calE_{11}$ and $\phi_{10} \circ (Ri_{\Gamma S'})_* \simeq \phi_\calB$ with $\calB \simeq L(i_{\Gamma S'}\times \Id_S)^*(\calE_{10})$. But by definition of $\calE_{10}$ and $\calE_{11}$, $\calA$ and $\calB$ are both isomorphic to $Lj^*\calE$ with $j\colon \Gamma \times S \to Q(Y,\Sigma)$.
		\end{proof}

		\section{Studying $\calM_X(2,1,7)$ and its restriction}\label{sectionStudyMxandrestriction}

		Fix $X=\Sigma\cap \matP V_{11}$ as defined in \S \ref{sectionsetup}, and fix $S$ a general hyperplane section. In particular, we assume that $S$ is a $K3$ surface with $\Pic(S)=\matZ \cdot H_S$ of genus $9$ and $S$ does not contain a line.
		We denote $\calM_X\coloneqq\calM_X(2,1,7)$, and $\calM_S\coloneqq\calM_S[2,1,3]$. The present paper is based on the following results from Brambilla and Faenzi.
		
		\begin{theorem}[\cite{BrambillaFaenzig9}, Thm. 5.1]\label{ThmBFg9MXisBlowUpPic2}
			The functor $\phi_{11}$ admits a right adjoint $\phi^!_{11}$. The latter induces a morphism
			$$\fonction{\varphi}{\calM_X(2,1,7)}{\Pic^2(\Gamma)}{F}{\phi^!_{11}F}$$
			which is a blow-up of $\Pic^2(\Gamma)$ along a subvariety isomorphic to the Hilbert scheme of lines in $X$. The exceptional divisor of $\varphi$ consists of the sheaves in $\calM_X(2,1,7)$ which are not globally generated.
		\end{theorem}
		
		\begin{proposition}[\cite{BrambillaFaenzig9}, Lem. $5.2$ and \cite{BrambillaFaenzig7}, Prop. 3.4 and 3.6]\label{PropDescriptionSheafMx217}
			Let $F\in\calM_X(2,1,7)$ be a sheaf. Then we have
			\begin{eqnarray*}
				H^k(X,F)=0 &\text{ for }& k=1,2,\\
				H^k(X,F(-1))=0 &\text{ for }& k=0,1,2,3 \\
				H^1(X,F(-t))=0 &\text{ for }& t\geq 1
			\end{eqnarray*}
			Moreover, either $F$ is locally free or $F^{**}\in\calM_X(2,1,6)$ is a stable \textit{vector bundle}, and there is a line $M_F\subset X$ and an exact sequence
			\begin{eqnarray}\label{EqFnotlocfreeFF**OM}
				0 \to F \to F^{**} \to \calO_{M_F} \to 0.
			\end{eqnarray}
			
			Furthermore, the following statements are equivalent:
			\begin{enumerate}
				\item the sheaf $F$ is not globally generated,
				\item the vector space $\Hom(\calU^\vee,F)$ is non-zero,
				\item\label{EqnEvalutationNonGloballyGenMx217} Denote $I\hookrightarrow F$ the image of the natural evalutation map $I=\im(ev\colon H^0(X,F)\otimes \calO_X \to F)$. Then $I\in\calM_X(2,1,8,2)$ and there is a line $L_F\subset X$ such that we have
				\begin{eqnarray}\label{EqFnotggIFOL}
					0 \to I \to F \to \calO_{L_F}(-1) \to 0. 
				\end{eqnarray}
				Moreover the sheaf $I$ admits a locally free resolution
				\begin{eqnarray}\label{EqFnotggOUI}
					0 \to \calO_X \to \calU^\vee \to I \to 0. 
				\end{eqnarray}
			\end{enumerate}
		\end{proposition}
		
		We prove that the pullback by $i_{SX}\colon S \hookrightarrow X$ gives a restriction morphism
		\begin{eqnarray}\label{EqnRESmorphism}
			\mathsf{res}\colon \calM_X \to \calM_S.
		\end{eqnarray}

		\begin{lemma}\label{LemmaStableSheavesAreMuStableAndRestriction}
			Let $F\in\calM_X$ be a sheaf. Then $F$ is $\mu$-stable, and its restriction $F_S$ to $S$ is also $\mu$-stable.
		\end{lemma}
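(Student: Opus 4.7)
The plan is to treat both $\mu$-stability claims via the ``odd $c_1$'' trick, reducing each to a cohomological vanishing which we control through the restriction sequence of Lemma \ref{lemmatorsubvartorsionfree} combined with Serre duality on $X$.

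\emph{Stability of $F$.} Since $F$ is Gieseker-semistable it is a fortiori $\mu$-semistable. A strict $\mu$-destabilization would produce a saturated rank-$1$ subsheaf $L \subset F$ with $\mu(L) = \mu(F) = 1/2$; but $L$, being reflexive of rank $1$ on the smooth variety $X$, is a line bundle, and $\Pic(X) = \matZ H_X$ forces $L \cong \calO_X(aH_X)$ with $a \in \matZ$, so $\mu(L) = a \neq 1/2$. Hence $F$ is $\mu$-stable.

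\emph{Stability of $F_S$.} The same odd-$c_1$ argument applied on $S$ (whose Picard group is $\matZ H_S$ by our generality assumption on $S$) reduces $\mu$-stability of $F_S$ to its mere $\mu$-semistability. Since $F_S$ is torsion-free (inherited from the fact that the sheaves parametrized by $\calM_X(2,1,7)$ are locally free, as part of the explicit description recalled above), $\mu$-semistability amounts to the absence of an injection $\calO_S(H_S) \hookrightarrow F_S$, equivalently to the vanishing
$$H^0(S, F_S(-H_S)) = 0.$$
Applying Lemma \ref{lemmatorsubvartorsionfree} to the torsion-free sheaf $F(-H_X)$ yields
$$0 \longrightarrow F(-2H_X) \longrightarrow F(-H_X) \longrightarrow F_S(-H_S) \longrightarrow 0,$$
and the long exact sequence in cohomology, together with $H^0(X, F(-H_X)) = 0$ (valid because $F(-H_X)$ is $\mu$-stable of slope $-1/2 < 0$), produces an injection
$$H^0(S, F_S(-H_S)) \hookrightarrow H^1(X, F(-2H_X)).$$
Since $F$ has rank $2$ with $\det F \cong \calO_X(H_X)$ one has $F^\vee \cong F(-H_X)$, and combined with $K_X = -H_X$ Serre duality on $X$ identifies $H^1(X, F(-2H_X))$ with $H^2(X, F)^\ast$.

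The main obstacle is therefore to verify $H^2(X, F) = 0$ for every $F \in \calM_X(2,1,7)$. This vanishing is a known feature of this moduli space: it can be read off from the resolutions of such sheaves by standard homogeneous bundles constructed in \cite{BrambillaFaenzig9}, or derived directly from the HPD description $F \leftrightarrow \phi^! F \in \Pic^2(\Gamma)$, under which the cohomology of $F$ on $X$ is governed by that of a line bundle on the plane quartic $\Gamma$ and hence vanishes in degrees $\geq 2$. Combined with the preceding step, this yields $H^0(S, F_S(-H_S)) = 0$, completing the proof of $\mu$-stability of $F_S$.
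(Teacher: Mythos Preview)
Your argument for $\mu$-stability of $F$ is fine and matches the paper's.

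For $F_S$, however, you assert that every $F\in\calM_X(2,1,7)$ is locally free. This is false: as recorded in Proposition~\ref{PropDescriptionSheafMx217} (and in \cite{BrambillaFaenzig9}, Lemma~5.2), a generic $F$ is a vector bundle, but there is a divisor in $\calM_X(2,1,7)$ consisting of sheaves $F$ whose double dual $F^{**}$ lies in $\calM_X(2,1,6)$ and which fail to be locally free along a line $M_F\subset X$. This breaks two steps of your argument simultaneously. First, the identification $F^\vee\cong F(-H_X)$ is only valid for rank-$2$ \emph{bundles}, so your Serre duality computation $H^1(X,F(-2H_X))\cong H^2(X,F)^*$ does not go through in the non-locally-free case. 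Second, and more basically, torsion-freeness of $F_S$ is not automatic: the singular locus of $F$ is a line meeting $S$ in a point, and one genuinely has to check that $F_S$ acquires no torsion there before even speaking of $\mu$-semistability.

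The paper treats the two cases separately. For $F$ locally free it argues essentially as you do (using $H^1(X,F(-2))=0$ directly from \cite{BrambillaFaenzig7}, without the Serre duality detour). For $F$ not locally free it uses the exact sequence $0\to F\to F^{**}\to\calO_{M_F}\to 0$: restricting to $S$ gives $0\to F_S\to (F^{**})_S\to\calO_Z\to 0$ with $Z$ zero-dimensional, which both establishes torsion-freeness of $F_S$ and reduces its stability to that of $(F^{**})_S$, the restriction of an ACM bundle in $\calM_X(2,1,6)$.
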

		
		\begin{proof}
			We know that $F$ is (Gieseker)-semistable. Let $G\subset F$ be a subsheaf of rank $1$ and with first chern class $c_1(G)=aH$ such that $\mu(F)=\mu(G)$. Then $aH^3 = H^3/2$ which is impossible for $a\in\matZ$. Hence $F$ is $\mu$-stable.
			
			Consider the exact (by Lemma \ref{lemmahigherpullbacktorsionfree}) sequence
			$$0 \to F(-2) \to F(-1) \to F_S(-1) \to 0.$$
			If $F$ is locally free, Hoppe's criterion (\ref{hoppecriterion}) gives $H^0(X,F(-1))=0$, and $H^1(X,F(-2))=0$ by Proposition~\ref{PropDescriptionSheafMx217}. Hence $H^0(S,F_S(-1))=0$ so $F_S$ is $\mu$-stable.
			If $F$ is not locally free, then by Proposition~\ref{PropDescriptionSheafMx217}, $F$ lies in an exact sequence
			$$0 \to F \to E \to \calO_L \to 0$$
			with $E\in \calM_X(2,1,6)$ stable \textit{vector bundle} and $L\subset X$ a line. Restricting this sequence to $S$ (using Lemma~\ref{lemmahigherpullbacktorsionfree}) we get
			\begin{eqnarray}\label{EqnSeqEFOLrestrS}
				0 \to F_S \to E_S \to \calO_Z \to 0
			\end{eqnarray}
			with $Z$ a $0$-dimensional subscheme. By \cite[Prop. 3.4]{BrambillaFaenzig7}, $E$ satisfies the same vanishings as $F$, in particular $E$ is $\mu$-stable, and the previous arguments apply identically to prove that $E_S$ is also $\mu$-stable. This implies by (\ref{EqnSeqEFOLrestrS}) that $F_S$ is torsion free, and any destabilizing subsheaf of $F_S$ also destabilizes $E_S$, which is not possible, so $F_S$ is $\mu$-stable.
		\end{proof}

		Our goal is to study the restriction morphism $\mathsf{res}$ (\ref{EqnRESmorphism}). In a first part, we will prove that the restriction is generically injective, see Theorem \ref{thmresinj}.
		In a second part, we will identify the subspace of $\calM_X$ on which the restriction is not injective and study the image of $\calM_X$ in $\calM_S$, see Theorem~\ref{thmrestotal}.

		\subsection{Globally generated sheaves}\label{sectionglobalgen}

		Let us denote $\phi\coloneqq \phi_{11}, \phi^!\coloneqq\phi_{11}^!$ for simplicity. We consider a sheaf $F\in\calM_X$.
		By \cite[Lem. 4.3]{BrambillaFaenzig9}, there is an exact sequence
		\begin{eqnarray}\label{UFXexseq} 
			0 \to \calU^\vee \to \phi\phi^!F \to F \to 0.
		\end{eqnarray}
		As $F$ is torsion-free, in view of Lemma \ref{lemmahigherpullbacktorsionfree} the restriction to $S$ gives the exact sequence
		\begin{eqnarray}
			0 \to \calU^\vee_S \to i_{SX}^*\phi\phi^!F \to F_S \to 0. \label{UFSexseq}
		\end{eqnarray}
		
		\begin{proposition}\label{propdimextFS}
			For $F$ globally generated we have
			$$\dim\Ext^1(F_S,\calU_S^\vee)=1.$$
		\end{proposition}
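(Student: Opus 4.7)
The sequence (\ref{UFSexseq}) furnishes a canonical class in $\Ext^1_S(F_S,\calU_S^\vee)$. I would first argue it is non-zero: a splitting would exhibit $\calU_S^\vee$ as a direct summand of $i_{SX}^*\phi\phi^!F$ compatible with the projection to $F_S$, which is incompatible with the $\mu$-stability of $F_S$ established in Lemma \ref{LemmaStableSheavesAreMuStableAndRestriction} (alternatively, one checks directly that the extension class on $X$ does not lie in the image of $\Ext^1_X(F,\calU^\vee(-1))\to\Ext^1_X(F,\calU^\vee)$). This already yields $\dim\Ext^1_S(F_S,\calU_S^\vee)\ge 1$.

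For the upper bound, I transfer the computation from $S$ to $X$: since $F$ is torsion-free, the remark after Lemma \ref{lemmatorsubvartorsionfree} gives $\Ext^k_S(F_S,\calU_S^\vee)\simeq \Ext^k_X(F,\calU_S^\vee)$. Applying $R\Hom_X(F,-)$ to the restriction sequence $0\to\calU^\vee(-1)\to\calU^\vee\to\calU_S^\vee\to 0$ (obtained from Lemma \ref{lemmatorsubvartorsionfree} since $\calU^\vee$ is locally free) reduces the problem to computing $\Ext^\bullet_X(F,\calU^\vee)$ and $\Ext^\bullet_X(F,\calU^\vee(-1))$.

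By Serre duality on the Fano threefold $X$ (with $K_X=-H_X$), each $\Ext^k_X(F,\calU^\vee(-j))$ is dual to $\Ext^{3-k}_X(\calU^\vee, F(j-1))$. Global generation of $F$ yields $\Hom_X(\calU^\vee,F)=0$ by Proposition \ref{PropDescriptionSheafMx217}, killing the top Ext in $k$. For the remaining groups I would apply $R\Hom_X(\calU^\vee,-)$ and its twist by $\calO_X(-1)$ to the defining exact sequence $0\to\calU^\vee\to\phi\phi^!F\to F\to 0$. By full-faithfulness of $\phi$, the middle terms read
$$\Ext^k_X(\calU^\vee,\phi\phi^!F)\simeq \Ext^k_\Gamma(\phi^!\calU^\vee, \phi^!F),$$
which, once $\phi^!\calU^\vee$ is identified, reduce to a Riemann-Roch computation on the plane quartic $\Gamma$ since $\phi^!F\in\Pic^2\Gamma$; the remaining $\Ext^k_X(\calU^\vee,\calU^\vee)$ and $\Ext^k_X(\calU^\vee,\calU^\vee(-1))$ are cohomology of $\End(\calU)$ and $\End(\calU)(-1)$ on $X$, accessible from the homogeneous structure of $\calU$ on $LG(3,6)$ together with the Koszul resolution of $X$ in the ambient linear section.

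The main obstacle is the explicit identification of $\phi^!\calU^\vee$ inside $\D^b(\Gamma)$, which I would extract from the kernel description in Remark \ref{rmkfamilyHPD} together with the standard adjunction formulas for Fourier-Mukai functors. As an independent sanity check, the outcome $\dim\Ext^1_S(F_S,\calU_S^\vee)=1$ should be consistent with computing $\chi(F_S,\calU_S^\vee)$ directly on $S$ via the Mukai pairing, combined with the vanishing $\Hom_S(F_S,\calU_S^\vee)=0$ (which follows from $\mu(F_S)=8>16/3=\mu(\calU_S^\vee)$ and $\mu$-stability of both sheaves, see Lemma \ref{LemmaStableSheavesAreMuStableAndRestriction}) and the Serre-duality identification $\Ext^2_S(F_S,\calU_S^\vee)\simeq\Hom_S(\calU_S^\vee,F_S)^\vee$, which must in turn be shown to vanish.
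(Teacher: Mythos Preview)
Your ``sanity check'' in the final paragraph is in fact the paper's entire argument, and it is far simpler than your main plan. The paper computes $\chi(F_S,\calU_S^\vee)=-1$ by Riemann--Roch on $S$, kills $\Hom_S(F_S,\calU_S^\vee)$ by the slope comparison you mention, and then kills $\Ext^2_S(F_S,\calU_S^\vee)\simeq\Hom_S(\calU_S^\vee,F_S)^\vee$. No separate lower bound is needed; your opening paragraph is therefore superfluous (and as written it does not yield a contradiction from $\mu$-stability of $F_S$ alone --- you are implicitly using stability of $(\phi\phi^!F)_S$, which is only established later as Lemma~\ref{lemmaphiphi!stable}).

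The one step you leave genuinely open is the vanishing of $\Hom_S(\calU_S^\vee,F_S)$, and here the paper avoids your detour through $\phi^!\calU^\vee$ entirely. After applying $R\Hom_X(F,-)$ to the restriction sequence (\ref{UXSexseq}) exactly as you propose, the outer terms $\Ext^2_X(F,\calU^\vee)$ and $\Ext^3_X(F,\calU^\vee)$ are disposed of using the \emph{tautological} sequence
\[
0\to\calU\to V\otimes\calO_X\to\calU^\vee\to 0.
\]
Since $H^k(X,F(-1))=0$ for all $k$ (Proposition~\ref{PropDescriptionSheafMx217}), Serre duality gives $\Ext^k_X(F,V\otimes\calO_X)=0$, hence $\Ext^k_X(F,\calU^\vee)\simeq\Ext^{k+1}_X(F,\calU)$; the latter vanish for $k=2,3$ by Serre duality and a slope comparison between $\calU$ and $F(-1)$. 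The upshot (Lemma~\ref{lemmahomUFS}) is the clean identity $\Hom_S(\calU_S^\vee,F_S)\simeq\Hom_X(\calU^\vee,F)$, which vanishes precisely by global generation. Your route via $\Ext^k_\Gamma(\phi^!\calU^\vee,\phi^!F)$ would require identifying $\phi^!\calU^\vee$ in $\D^b(\Gamma)$ --- a nontrivial computation you flag but do not carry out --- whereas the tautological sequence sidesteps this completely.
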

		Note that by Theorem~\ref{ThmBFg9MXisBlowUpPic2} the general element of $\calM_X$ is globally generated.

		\begin{proof}
			We need to consider two exact sequences, namely
			\begin{eqnarray}
				&0& \to \calU \to V\otimes \calO_X \to \calU^\vee \to 0 \label{UUexseq}\\
				&0& \to \calU^\vee(-1) \to \calU^\vee \to \calU^\vee_S \to 0, \label{UXSexseq}
			\end{eqnarray}
			where $V$ is the $\matC$-vector space of dimension $6$ defining $LG(3,6)$. For the sequence (\ref{UUexseq}), note that the universal quotient bundle is isomorphic to the dual of the universal subbundle on $\Sigma$, thanks to the symplectic form on $V$.

			From Hirzebruch-Riemann-Roch, we can compute $\chi(F_S,\calU_S^\vee)= \chi(S,F_S\otimes\calU_S)$. We have $\ch(F_S)=(2,H_S,1)$, (\ref{UUexseq}) gives $\ch(\calU_S)=(3,-H_S,0)$ and $\td(S)=(1,0,2)$ as $S$ is a $K3$ surface. We obtain
			\begin{eqnarray*}
				\chi(S,F_S\otimes \calU_S) &=& \int (2,H_S,1)(3,-H_S,0)(1,0,2) =-1.
			\end{eqnarray*}
			To obtain the promised result, we will prove $\Ext^2(F_S,\calU_S^\vee) = 0 = \Hom(F_S,\calU_S^\vee)$.
			
			First, we have $\mu(F_S)=1/2$ and $\mu(\calU^\vee_S)=1/3$. By stability, we get $\Hom(F_S,\calU^\vee_S)=0$.

			\begin{lemma}\label{lemmahomUFS}
				For any $F\in\calM_X$, we have
				$$\Hom_S(\calU_S^\vee,F_S) \simeq \Hom_X(\calU^\vee,F)$$
			\end{lemma}
			
			\begin{proof}
				We proceed as explained in Remark~\ref{RmkComputationsExtgroupsTools}. By Serre duality on $X$ and $S$, the statement is equivalent to $\Ext^2_S(F_S,\calU_S^\vee) \simeq \Ext^3_X(F,\calU^\vee(-1))$. Apply $R\Hom_X(F,-)$ to (\ref{UXSexseq}) to get
				$$\Ext^2_X(F,\calU^\vee) \to \Ext^2_S(F_S,\calU_S^\vee) \to \Ext^3_X(F,\calU^\vee(-1)) \to \Ext^3_X(F,\calU^\vee).$$
				Now applying $R\Hom_X(F,-)$ to (\ref{UUexseq}) and since $H^k(X,F(-1))=0 \ \forall k$ (Proposition \ref{PropDescriptionSheafMx217}), we have
				$$\Ext^k_X(F,\calU^\vee) \simeq \Ext^{k+1}_X(F,\calU) \ \ \forall k.$$
				We have $\Ext^2_X(F,\calU^\vee) \simeq \Ext^3_X(F,\calU)=\Hom_X(\calU,F(-1))=0$ by stability since $\mu(F(-1))=-1/2$ and $\mu(\calU)=-1/3$. Moreover $\Ext^3_X(F,\calU^\vee)\simeq \Ext^4_X(F,\calU)=0$. Hence we obtain $\Ext^2_S(F_S,\calU_S^\vee) \simeq \Ext^3_X(F,\calU^\vee(-1))$.
			\end{proof}
			
			Now, for $F$ globally generated we have $\Ext^3_X(F,\calU^\vee(-1))\simeq \Hom_X(\calU^\vee,F)=0$ (Proposition \ref{PropDescriptionSheafMx217}), and hence by Lemma \ref{lemmahomUFS} we conclude the proof.
		\end{proof}
		
		From the exact sequence (\ref{UFSexseq}) we obtain the following corollary.
		
		\begin{corollary}\label{coroFSisomimpliesiphiphi!Fisom}
			Let $\widetilde{X}=\Sigma\cap \matP \widetilde{V}_{11}$, $\widetilde{V}_{11}\subset V_{14}$ be another Fano threefold constructed as in \S \ref{sectionsetup}. Let $\widetilde{\Gamma}$ be its associated quartic plane curve, and let $\widetilde{\phi}_{11}\colon \widetilde{\Gamma} \to \widetilde{X}$ be the functor obtained by HPD. For $F\in\calM_X$ and $\widetilde{F}\in\calM_{\widetilde{X}}$ globally generated, we have
			$$F_S\simeq \widetilde{F}_S \Rightarrow i_{SX}^*\phi\phi^!F \simeq i_{S\widetilde{X}}^*\widetilde{\phi}\widetilde{\phi}^{!}\widetilde{F}.$$
		\end{corollary}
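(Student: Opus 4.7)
My plan is to identify both sides as the unique (up to isomorphism) nonsplit extension of $F_S \simeq \tilde{F}_S$ by $\calU_S^\vee$. I would first restrict the defining sequence $0 \to \calU^\vee \to \phi\phi^!F \to F \to 0$ on $X$ (and its analogue on $\tilde{X}$) to $S$ via Lemma \ref{lemmahigherpullbacktorsionfree}, obtaining the extension (\ref{UFSexseq}) and its counterpart for $\tilde{F}$. Crucially, both left-hand terms coincide with the same sheaf $\calU_S^\vee$: indeed, $\calU$ is restricted from the tautological bundle on $\Sigma = LG(3,6)$, so its pullback to $S$ is intrinsic to $S \subset \Sigma$ and independent of the ambient Fano. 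Transporting the second extension along the given isomorphism $F_S \simeq \tilde{F}_S$ yields two extension classes living in the one-dimensional space $\Ext^1(F_S, \calU_S^\vee)$ of Proposition \ref{propdimextFS}. Since $\calU_S^\vee$ and $F_S$ are simple ($\mu$-stable), the action of $\Aut(\calU_S^\vee) \times \Aut(F_S) = \matC^* \times \matC^*$ on $\Ext^1 \simeq \matC$ has exactly two orbits, so there are at most two isomorphism classes of middle sheaves: the split one $\calU_S^\vee \oplus F_S$, and a unique nonsplit one.

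The main step is then to rule out the split extension on both sides. For this I would invoke Lemma \ref{lemisoway}, which gives $i_{SX}^* \phi\phi^!F \simeq \phi_{10}\bigl((i_{\Gamma S^{'}})_* \phi^!F\bigr)$. Since $\phi_{10}$ is a derived equivalence, it preserves indecomposability, and it therefore suffices to show that $(i_{\Gamma S^{'}})_* \phi^!F$ is indecomposable in $\Coh(S^{'}, \alpha)$. Any direct sum decomposition of this pushforward is itself annihilated by the ideal sheaf $\calI_{\Gamma}$ (as is the pushforward), and hence descends to a decomposition of $\phi^!F$ in $\Coh(\Gamma)$; but $\phi^!F$ is a degree $2$ line bundle on the integral plane quartic $\Gamma$, which is indecomposable, giving a contradiction. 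The same argument with $\tilde{X}$ and its HPD-curve $\tilde{\Gamma}$ shows that $i_{S\tilde{X}}^* \tilde{\phi}\tilde{\phi}^! \tilde{F}$ is likewise indecomposable, hence also a nonsplit extension of $F_S$ by $\calU_S^\vee$.

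Combining these steps, both middle sheaves represent the unique nonsplit class in $\Ext^1(F_S, \calU_S^\vee)$, yielding the desired isomorphism $i_{SX}^*\phi\phi^!F \simeq i_{S\tilde{X}}^*\tilde{\phi}\tilde{\phi}^!\tilde{F}$. The main obstacle I expect is the descent-of-decomposition argument: one must state carefully that a coherent sheaf on $S^{'}$ annihilated by $\calI_\Gamma$ is the pushforward of a uniquely determined sheaf on $\Gamma$, and verify this carries over to the $\alpha$-twisted setting in which the equivalence $\phi_{10}$ naturally lives. Everything else reduces to bookkeeping with the one-dimensional Ext group of Proposition \ref{propdimextFS} and the formal properties of the derived equivalence $\phi_{10}$.
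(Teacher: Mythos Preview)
Your approach is correct and matches the paper's: the corollary is stated as an immediate consequence of the exact sequence (\ref{UFSexseq}) together with the one-dimensionality of $\Ext^1(F_S,\calU_S^\vee)$ from Proposition \ref{propdimextFS}, and you reproduce exactly this argument. The paper does not spell out why the extension (\ref{UFSexseq}) is nonsplit, so your additional justification via Lemma \ref{lemisoway} and the indecomposability of $(i_{\Gamma S'})_*\phi^!F$ is a genuine improvement in rigor; note also that a shorter route becomes available once Lemma \ref{lemmaphiphi!stable} is in hand (the restriction $(\phi\phi^!F)_S$ is $\mu$-stable, hence simple, hence indecomposable), though that lemma appears later in the paper's logical order.
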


		\begin{theorem}\label{thmresinj}
			The morphism $\textup{res}$ is injective on the set of globally generated sheaves (in particular, it is generically injective). Moreover, in the notation of Corollary  \ref{coroFSisomimpliesiphiphi!Fisom}, for any two globally generated sheaves $F\in\calM_{X},\widetilde{F}\in\calM_{\widetilde{X}}$, we have 
			$$F_S\simeq \widetilde{F}_S \Leftrightarrow X=\widetilde{X} \text{ and } F\simeq \widetilde{F}.$$
		\end{theorem}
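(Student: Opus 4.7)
The reverse implication is immediate, and injectivity of $\textup{res}$ on globally generated sheaves is the special case $X=\tilde X$ of the stronger assertion. For the forward direction, the strategy is to use HP-duality to translate the information carried by $F_S$ into data on the \emph{fixed} $K3$ surface $S^{'}=Y_{10}$, where supports can recover the ambient Fano threefold.

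Assume $F_S\simeq \tilde F_S$. Corollary \ref{coroFSisomimpliesiphiphi!Fisom} then provides an isomorphism
$$i_{SX}^*\phi\phi^!F \simeq i_{S\tilde X}^*\tilde\phi\tilde\phi^!\tilde F$$
in $\D^b(S)$. The key observation is that the HP-dual $K3$ surface $S^{'}=Y_{10}$ depends only on the $10$-dimensional linear subspace $V_{10}\subset V_{14}$ cut out by $S$, and is therefore common to both $X$ and $\tilde X$. Applying Lemma \ref{lemisoway} (recall $\phi=\phi_{11}$) to each side rewrites the isomorphism as
$$\phi_{10}\bigl((i_{\Gamma S^{'}})_*\phi^!F\bigr) \simeq \phi_{10}\bigl((i_{\tilde\Gamma S^{'}})_*\tilde\phi^!\tilde F\bigr),$$
and since $\phi_{10}$ is an equivalence one deduces
$$(i_{\Gamma S^{'}})_*\phi^!F \simeq (i_{\tilde\Gamma S^{'}})_*\tilde\phi^!\tilde F \quad \text{in } \D^b(S^{'}).$$

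I now compare supports on $S^{'}$. Since $F$ is globally generated, $\phi^!F$ is a non-zero line bundle in $\Pic^2\Gamma$ by \cite{BrambillaFaenzig9}, so its pushforward is supported exactly on $\Gamma$; the same applies to $\tilde\Gamma$ on the right. Hence $\Gamma=\tilde\Gamma$ as closed subschemes of $S^{'}$. By the HPD description, $\Gamma=Y\cap \matP V_{11}^\perp$ is a plane quartic which spans the $2$-plane $\matP V_{11}^\perp\subset \matP V_{14}^\vee$, so $\Gamma$ determines $\matP V_{11}^\perp=\langle\Gamma\rangle$, hence the subspace $V_{11}$, and finally $X=\Sigma\cap\matP V_{11}$. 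Thus $X=\tilde X$.

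Once $X=\tilde X$ (so $\Gamma=\tilde\Gamma$ and $\phi=\tilde\phi$), the isomorphism above descends to $\phi^!F \simeq \phi^!\tilde F$ in $\Pic^2\Gamma$. The morphism $\calM_X\to\Pic^2\Gamma$, $F\mapsto\phi^!F$, is a blow-up whose exceptional divisor consists precisely of the non-globally-generated sheaves (\cite{BrambillaFaenzig9}), hence is injective on the globally generated locus; therefore $F\simeq \tilde F$, which also yields injectivity of $\textup{res}$ on globally generated sheaves. The main technical point of the argument is the support comparison on $S^{'}$ and the reconstruction of $X$ from $\Gamma$ via HPD; everything else is a straightforward chase through Lemma \ref{lemisoway} once Corollary \ref{coroFSisomimpliesiphiphi!Fisom} is in hand.
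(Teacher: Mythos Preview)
Your argument is correct and follows essentially the same route as the paper's proof: apply Corollary~\ref{coroFSisomimpliesiphiphi!Fisom}, pass through Lemma~\ref{lemisoway} and the equivalence $\phi_{10}$ to obtain $(i_{\Gamma S'})_*\phi^!F\simeq (i_{\tilde\Gamma S'})_*\tilde\phi^!\tilde F$, compare supports to get $\Gamma=\tilde\Gamma$ hence $X=\tilde X$, then use full faithfulness of the closed immersion pushforward and injectivity of $\phi^!$ off the exceptional divisor. Your justification that $\Gamma$ spans $\matP V_{11}^\perp$ and therefore determines $X$ is slightly more explicit than the paper's, which simply asserts ``$\Gamma=\tilde\Gamma$, that is $X=\tilde X$''.
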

		
		\begin{proof}\label{proofthmresinj}
			Let $F\in\calM_{X}$, $\widetilde{F}\in\calM_{\widetilde{X}}$ be globally generated sheaves over $X$ and $\widetilde{X}$ such that $F_S\simeq \widetilde{F}_S$. By Corollary \ref{coroFSisomimpliesiphiphi!Fisom} and Lemma \ref{lemisoway}, we obtain
			\begin{eqnarray}
				F_S \simeq \widetilde{F}_S &\Leftrightarrow & i_{SX}^*\phi\phi^!F \simeq i_{S\widetilde{X}}^*\widetilde{\phi}\widetilde{\phi}^!\widetilde{F} \nonumber \\
				&\Leftrightarrow &  \phi_{10} (i_{\Gamma S'})_* \circ\phi^!F \simeq  \phi_{10} (i_{\widetilde{\Gamma} S'})_* \circ \widetilde{\phi}^!\widetilde{F} \text{ by Lemma } \ref{lemisoway} \nonumber \\
				&\Leftrightarrow &  (i_{\Gamma S'})_* \circ \phi^!F \simeq   (i_{\widetilde{\Gamma} S'})_* \circ \widetilde{\phi}^!\widetilde{F} \ \text{ since } \phi_{10} \text{ is an equivalence. } \label{isotorsionlinebundle}
			\end{eqnarray}

			We know by \cite{BrambillaFaenzig9} that $\phi^!F$ and $\widetilde{\phi}^!\widetilde{F}$ are line bundles on $\Gamma$ and $\widetilde{\Gamma}$ respectively. But then $(i_{\Gamma S'})_* \circ \phi^!F$ and $(i_{\widetilde{\Gamma} S'})_* \circ \widetilde{\phi}^!\widetilde{F}$ are isomorphic torsion sheaves of rank one supported on a curve, hence $\Gamma=\widetilde{\Gamma}$. Since $\Gamma$ is given by the linear intersection $\Sigma^\vee\cap \matP L^\vee$ (see the beginning of \S \ref{sectionHPD}), we obtain $X=\widetilde{X}$ for $X=\Sigma\cap \matP L$. Finally (\ref{isotorsionlinebundle}) implies that $\phi^!F=\phi^!\widetilde{F}$ because the pushforward by closed immersion is fully faithful. Hence $F \simeq \widetilde{F}$ as $\phi^!\colon \calM_X \to \Pic^2(\Gamma)$ is injective on globally generated sheaves: these are exactly the sheaves which are not in the exceptional divisor of the blow-up $\calM_X \to \Pic^2\Gamma$.
		\end{proof}

		\subsection{The non-injectivity locus}\label{sectionresnotinj}
		
		We investigate now which sheaves on $\calM_X$ have the same restriction on $\calM_S$. Recall from Proposition~\ref{PropDescriptionSheafMx217} that a sheaf $F$ which is not locally free (resp. not globally generated) is so along a line $M_F$ (resp. $L_F$). 
		Note that the exact sequence (\ref{EqFnotggIFOL}) is induced by the evaluation map $\mathsf{ev}_F\colon H^0(F)\otimes \calO_X \to F$, that is $\calO_{L_F}(-1)=\coker(ev_F)$.
		
		\begin{proposition}\label{PropFGnotlfggMFisLG}
			Let $F\not\simeq G\in \calM_X$ such that $F_S\simeq G_S$. Then both $F$ and $G$ are not locally free nor globally generated. Moreover $L_F=M_G$ and $M_F=L_G$ but $M_F\neq L_F$.
		\end{proposition}
		
		\begin{proof}
			If $F$ were globally generated, combining Proposition~\ref{PropDescriptionSheafMx217} and Lemma~\ref{lemmahomUFS} we would get $0=\Hom_X(\calU^\vee, F)\simeq \Hom_S(\calU^\vee_S, F_S) \simeq \Hom_X(\calU^\vee, G)=0$, in particular $G$ would be globally generated, contradicting Theorem~\ref{thmresinj}.

			To prove that neither $F$ nor $G$ is locally free, it is enough to prove it for $G$. Indeed, in this case the restriction of (\ref{EqFnotlocfreeFF**OM}) to $S$ gives an exact sequence
			$$0 \to G_S \to (G^{**})_S \to \calO_y \to 0$$
			with $y=M\cap S$ (recall we assumed that $S$ does not contain a line). Applying $R\calH om(-,\calO_S)$ to the sequence, and using $\calH om(\calO_y,\calO_S)=0$ gives $G_S^*\simeq (G^{**})_S^*$, and therefore $G_S^{**}\simeq (G^{**})_S^{**}\simeq (G^{**})_S$. So $G_S$ is not locally free and $F_S \simeq G_S$ implies that $F$ is not locally free neither.

			Let's prove that $G$ is not locally free and $M_G=L_F$. By symmetry, this gives $M_F=L_G$ aswell. Apply $\Hom(F,-)$ to $0 \to G(-1) \to G \to G_S \to 0$ to get
			\begin{eqnarray}\label{FGF_SG_S}
				0 \to \Hom(F,G) \to \Hom(F_S,G_S) \to \Ext^1(F,G(-1)).
			\end{eqnarray}
			We get $\Ext^1(F,G(-1))\simeq \Ext^2(G,F) \neq 0$, otherwise an isomorphism $F_S\simeq G_S$ would lift to an isomorphism $F\simeq G$ by exactness of (\ref{FGF_SG_S}).
			Apply $\Hom(G,-)$ to (\ref{EqFnotggIFOL}) to obtain
			$$\Ext^2(G,I) \to \Ext^2(G,F) \to \Ext^2(G,\calO_{L_F}(-1)) \to \Ext^3(G,I).$$
			
			\begin{itemize}
				\item Apply $\Hom(G,-)$ to  $0 \to \calO_X \to \calU^\vee \to I \to 0$ to get
				$$\Ext^2(G,\calU^\vee) \to \Ext^2(G,I) \to \Ext^3(G,\calO_X).$$
				But $\Ext^3(G,\calO_X)=0$ as $H^k(X,G(-1))=0 \ \ \forall k$ (Proposition \ref{PropDescriptionSheafMx217}), and from (\ref{UUexseq}) $\Ext^2(G,\calU^\vee) \simeq \Ext^3(G,\calU) \simeq \Hom(\calU, G(-1)) = 0$ by stability and comparing slopes. Hence $\Ext^2(G,I)=0$.
				
				\item $\Ext^3(G,I) \simeq \Hom(I,G(-1)) = 0$ comparing slope and by stability.
			\end{itemize}
			We obtain $\Ext^2(G,F) \simeq \Ext^2(G,\calO_{L_F}(-1))$. Hence for $F\not\simeq G$ with $F_S\simeq G_S$, we have $\Ext^2(G,\calO_{L_F}(-1)) \simeq \Ext^1(\calO_{L_F},G)^* \neq 0$. Therefore we have a non-trivial exact sequence
			\begin{eqnarray}\label{exseqGnotlocfree}
				0 \to G \to \calG \to \calO_{L_F} \to 0,
			\end{eqnarray}
			and computations of Chern classes gives
			
			\begin{center}
				\begin{tabular}{ l c c c }
					& $G$ & $\calG$  &  $\calO_{L_F}$ \\
					$\rk$ & $2$ &  $2$    & $0$ \\
					$c_1$ & $1$ & $1$ & $0$\\
					$c_2$ & $7$ & $6$ & $-1$\\
					$c_3$ & $0$ & $0$ & \phantom{.}$1$.\\
				\end{tabular}
			\end{center}
			If $\calG$ is not torsion free, consider its torsion subsheaf $\calG_t$ and the exact sequence
			$$0 \to \calG_t \to \calG \to \calG_f \to 0.$$
			First note that $\calG_f$ is stable. Indeed, if $K\subseteq \calG_f$ with rank $1$ and $c_1(K)=c\ge 1$, denote $K''$ the image of $K$ in $T$. We can consider $0 \to K' \to K \to K'' \to 0$ and computing Chern classes we obtain $c_1(K')=c \ge 1$, but $K'\subseteq G$ as it is the kernel of $K \to K''$, so $K'$ destabilizes $G$ which is absurd.
			
			The composition $G \hookrightarrow \calG \to \calG_f$ is still injective as $G$ is torsion free. %If $\coker(\pi)=0$, then $\calG \to \calG_f\simeq G$ splits the sequence (\ref{exseqGnotlocfree}) which is absurd.
			Recall that if a sheaf $\calH$ is supported on an integral subvariety $Z$ of codimension $m$, and has rank $r$ at a generic point of $Z$, Grothendieck-Riemann-Roch implies that $c_k(\calH)=0$ for $1\leq k \leq m-1$ and:
			$$c_m(\calH)=(-1)^{m-1}r[Z].$$
			If $Z$ is reducible, the same formula holds by addition over the components of $Z$ having maximal dimension. Using that $\calG_f$ is stable, a quick computation gives $c_1(\calG_t)=0$ (hence $c_1(\calG_f)=1$). In particular, we obtain that $G$ is not locally free by \cite[Lem. 2.1]{BrambillaFaenzig9}. Another computation gives $c_2(\calG_t)=-1$ or $0$, hence $c_2(\calG_f)=6$ or $7$. We distinguish these two cases. Set $d=c_3(\calG_t)$.
			
			\begin{enumerate}%[wide, leftmargin=10pt, labelwidth=!, labelindent=20pt]
				\item If $c_2(\calG_f)=7$, we obtain $c_3(\calG_f)=1-d$. The quotient of the injective map $G \hookrightarrow \calG_f$ is a zero dimensional torsion sheaf $T$ with $c_3(T)=1-d$, hence $1-d\geq 0$. From Proposition~\ref{PropDescriptionSheafMx217}, either $\calG_f$ is locally free or there is an exact sequence 
				$$0 \to \calG_f \to E \to \calO_L \to 0$$
				with $E$ rank $2$ vector bundle with $c_1(E)=1$, $c_2(E)=6$. In the latter case, computation of Chern classes gives $1-d=0$, hence $\calG_f\simeq G$. But the map $\calG \to \calG_f \simeq G$ splits the exact sequence (\ref{exseqGnotlocfree}) which is absurd.
				In the former case ($\calG_f$ locally free), the inclusion $G \hookrightarrow \calG_f$ implies that $G$ is locally free on an open subset of codimension $3$, which is absurd as the locus of non locally freeness of $G$ is the line $M_G$. We conclude that $c_2(\calG_t)\neq -1$.
				\item If $c_2(\calG_f)=6$, consider the exact sequence 
				$$0 \to \calG_f \to \calG_f^{**} \to Q \to 0.$$
				We know that $\calG_f^{**}$ is stable (from the same proof as for $\calG_f$) and satisfies $c_1(\calG_f^{**})=1$. From \cite[Lem. 3.1]{BrambillaFaenzig7} we must have $c_2(\calG_f^{**})\geq 6$, hence $c_2(\calG_f^{**})= 6$ and $Q$ is $0$-dimensional. Moreover, since $\calG_f^{**}$ is reflexive it also satisfies $c_3(\calG_f^{**})\geq 0$ (generalization of \cite[Prop. 2.6]{HartshorneStableReflexiveSheaves}). From Proposition~\ref{PropDescriptionSheafMx217} again, $\calG_f^{**}$ must be locally free. We deduce that $\calG_f$ is locally free on an open subset $U\subset X$ of codimension $3$.
				
				The cokernel of the injective map $G \hookrightarrow \calG_f$ is a torsion sheaf $T$ with $c_2(T)=-1$, so it is supported on a line $L$. In particular, we obtain that $G$ is locally free on $U\smallsetminus L$, so $L=M_G$. The composition $\calG \twoheadrightarrow \calG_f \twoheadrightarrow T$ factors through $\calO_{L_F}$ as shown on the commutative diagram
				
				\begin{center}
					\begin{tikzcd}
						0 \arrow[r]& G \arrow[d, "="] \arrow[r] & \calG \arrow[r]\arrow[d, two heads]  & \calO_{L_F} \arrow[r]\arrow[d, dashed, two heads] & 0\\
						0 \arrow[r] & G \arrow[r] & \calG_f \arrow[r] & T \arrow[r] & 0.
					\end{tikzcd}
				\end{center}
				We obtain a surjective map $\calO_{L_F} \twoheadrightarrow T$, which gives $L_F=M_G$.
			\end{enumerate}
			
		\end{proof}
		
		The following results is proved in \cite[arXiv v.1, Lem. 5.6]{BrambillaFaenzig9}, but we add a more direct proof here.
		
		\begin{lemma}\label{LemmaLFisnotMF}
			In the notation of Proposition~\ref{PropFGnotlfggMFisLG}, we have $M_F\neq M_G$.
		\end{lemma}
		
		\begin{proof}
			Assume $M_F=M_G$ (equivalently, $M_F=L_F$) for the seek of contradiction. Denote $I_F$ and $I_G$ the sheaves appearing in (\ref{EqFnotggIFOL}) and $\gamma_F,\gamma_G\in \Hom(\calO_X,\calU^\vee)$ the map appearing in (\ref{EqFnotggOUI}) with respect to $F$ and $G$ respectively. Recall that $I_F$ is the image of the natural evaluation map $\mathsf{ev}_{X,F}\colon H^0(X,F)\otimes \calO_X \to F$, and similarly for $G$. Hence $(I_F)|_S=(\im(\mathsf{ev}_{X,F}))_S\simeq \im(\mathsf{ev}_{S,F_S})$, and since $F_S\simeq G_S$ we get $I_S\coloneqq (I_F)|_S \simeq (I_G)|_S$. Therefore$(\gamma_F)|_S$ and $(\gamma_G)|_S$ are proportional. But from
			$$0 \to \calU(-1) \to \calU \to \calU_S \to 0$$
			we see that $H^0(X,\calU) \to H^0(S,\calU_S)$ is injective, hence $\gamma_F$ and $\gamma_G$ are proportional, that is $I\coloneqq I_F\simeq I_G$.
			But we have
			\begin{eqnarray*}
				\Ext^1_X((i_{LX})_*\calO_{L}(-1),I) &\simeq& \Ext^2_X(I,(i_{LX})_*\calO_{L}(-2))^* \\
				&\simeq & \Ext^2_L(Li_{LX}^*I,\calO_{L}(-2))^* \\
				&\simeq & \Ext^{-1}_L(\calO_L,Li_{LX}^*I)^* \\
				&\simeq & \Ext^{-1}_L(\calO_L,Li_{LX}^*I)^* 
			\end{eqnarray*}
			We know that the latter group is not trivial, because $F\in\Ext^1_X((i_{LX})_*\calO_{L}(-1),I)$ is torsionfree. Since $L$ is a curve, $Li_{LX}^*I\simeq \oplus_k L^ki_{LX}^*I[-k]$. We will prove $L^ki_{LX}^*I=0$ for $k>1$ and $L^1i_{LX}^*I\simeq \calO_L$, i.e. $F\simeq G$ which is absurd.
			
			Apply $Li^*_{LX}$ to (\ref{EqFnotggOUI}) to get the exact sequence
			$$0 \to L^1i^*_{LX}I \to \calO_L \xrightarrow{k} \calU_L^\vee \to I_L \to 0.$$
			We obtain $L^ki_{LX}^*I=0$ for $k>1$. Then $L^1i^*_{LX}=\ker(k)$ and denote $P=\im(k)$. We know that $\rk(I_L)\geq 2$ by upper semicontinuity. If $\rk(I_L)=2$, then $\rk(P)=1$ and hence $L^1i^*_{LX}I=0$ because it is a torsion subsheaf of $\calO_L$. But this would give $\Ext^{-1}_L(\calO_L,Li_{LX}^*I)^*=0$ which is absurd. Since $\calU^\vee_L\to I_L$ is surjective, we get $\rk(I_L)=3$, hence $P=0$ and $L^1i^*_{LX}I\simeq \calO_L$.
		\end{proof}

		\begin{theorem}\label{thmrestotal}
			For X general, the image of $\calM_X$ in $\calM_S$ via the restriction map is a Lagrangian subvariety with finitely many singular points, each of which have exactly $2$ preimages in $\calM_X$.
		\end{theorem}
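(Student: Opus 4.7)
The statement decomposes into three independent claims: the image $\mathrm{res}(\calM_X)$ is a three-dimensional closed subvariety of $\calM_S$; the holomorphic symplectic form of $\calM_S$ vanishes on it; and the non-injectivity of $\mathrm{res}$ is concentrated at finitely many ordinary double points.

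First I would handle the dimension and closedness. Since $\calM_X$ is the Gieseker moduli space it is projective, so $\mathrm{res}(\calM_X)$ is a closed subvariety of $\calM_S$. Theorem \ref{thmresinj} gives generic injectivity of $\mathrm{res}$, and the description of $\calM_X$ as the blow-up of $\Pic^2(\Gamma)$ (a threefold, since $\Gamma$ is a plane quartic of genus $3$) gives $\dim \calM_X = 3 = \tfrac{1}{2}\dim \calM_S$, so the image has the expected half-dimension.

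Second, to prove that the image is Lagrangian, I would appeal to Tyurin's theorem as recalled in the Introduction (via \cite{BeauvilleFanoThreefoldK3}): on the open locus of locally free $F\in \calM_X$ with $H^1(X,F)=0$ and $\Ext^2(F,F)=0$, the restriction is an immersion whose image is Lagrangian. Proposition \ref{PropDescriptionSheafMx217} ensures $H^1(X,F)=0$ on all of $\calM_X$, smoothness of $\calM_X$ (from \cite{BrambillaFaenzig9}) ensures the $\Ext^2$-vanishing on a dense open subset of locally free sheaves, and the globally generated locus is dense. On this dense open subset the pull-back of the symplectic form of $\calM_S$ is zero; since this form is holomorphic, vanishing is a closed condition and extends to the whole image, which therefore is Lagrangian.

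Third, the double-point statement is essentially a consequence of section \ref{sectionresnotinj}: every collision $F_S \simeq G_S$ with $F\neq G$ corresponds, via the assignment $F \mapsto L_F + M_F$, to a reducible conic $L+L^{'}$ in $X$ whose two components meet $S$ at a common point (because $L\cap S = L^{'}\cap S$ by the "intersecting on $S$" condition recalled at the beginning of Section \ref{sectionStudyMxandrestriction}); conversely each such reducible conic produces exactly one unordered pair $\{F,G\}$, by swapping the roles of the two lines. So it suffices to show that for $X$ general the set of pairs $(L,L^{'})$ of distinct lines in $X$ with $L\cap S = L^{'}\cap S$ is finite, and that each such pair gives an ordinary double point on $\mathrm{res}(\calM_X)$. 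I would prove the first by a genericity argument on the map $\pi : \calH_1^0(X) \to S,\ L\mapsto L\cap S$ from the smooth curve of lines to the K3 surface $S$: for $X$ general, $\pi$ is a finite immersion whose image is a nodal curve, so its singular locus (the set of pairs of lines meeting over the same point of $S$) is finite. For the second, a local deformation-theoretic computation with the exact sequences \eqref{BF9exseq5.3}–\eqref{BF9exseq5.4} shows that at each collision point the image carries two smooth branches (one from each line in the reducible conic) meeting transversely, yielding an ordinary double point.

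The main obstacle is the last step: establishing the genericity statement on $\pi$ and the transversality of the two branches at each bad pair. The Lagrangian part is formal once Tyurin's theorem is applied on a dense open locus, but controlling exactly how the map $\mathrm{res}$ degenerates along the exceptional divisor of $\calM_X\to \Pic^2(\Gamma)$ requires careful use of the genericity of $X$, e.g. to exclude configurations of three concurrent lines in $X\cap S$ or non-reduced intersections that would give worse singularities than ordinary double points.
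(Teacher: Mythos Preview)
Your plan is correct and closely mirrors the paper's argument, but you miss one simplification that dissolves what you call ``the main obstacle''. For $X$ general the curve $\calH_1^0(X)$ is smooth, so the blow-up $\calM_X\to\Pic^2(\Gamma)$ is smooth; since $\chi(F,F)$ is constant and $\hom(F,F)=1$, $\ext^3(F,F)=0$, smoothness forces $\ext^1(F,F)$ (hence $\ext^2(F,F)$) to be constant, and as it vanishes at one point it vanishes \emph{everywhere}. Thus Beauville's criterion applies to every $F\in\calM_X$, not just on a dense open: $res$ is an immersion (local isomorphism onto its image) at every point, and the image is Lagrangian with no need to pass to a closure. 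In particular, at a collision point the image is automatically the union of as many smooth $3$-dimensional branches as there are preimages, so the entire ``local deformation-theoretic computation'' you propose becomes unnecessary.

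Two further remarks. First, the paper reads ``double point'' as ``point with exactly two preimages under $res$'' (equivalently, two smooth local branches); it does not claim or prove transversality, so you need not worry about ordinary nodes. Second, for finiteness the paper does not analyse the map $\pi:\calH_1^0(X)\to S$ directly; it observes instead that the locus in $X$ swept out by intersection points of pairs of lines has dimension at most one, so a general hyperplane section $S$ meets it in finitely many points. This is equivalent to your nodal-image argument but avoids having to prove that $\pi$ is an immersion. Your ``conversely each such reducible conic produces exactly one pair $\{F,G\}$'' is not needed for the count (the forward implication already bounds the number of collisions by the number of such pairs), though it happens to be true.
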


		\begin{proof}
			The assumption $X$ general ensures that $\calM_X$ is smooth (recall that $\calM_X$ is the blow-up of $\Pic^2(\Gamma)$ along a subscheme isomorphic to the Fano of lines $\calH^0_1(X)$ which is smooth for $X$ general \cite[Thm. 4.2.7]{IskovskikhProkhorovFanoVarieties}). 
			
			\begin{lemma}
				For all $F\in\calM_X$, we have $\Ext^2(F,F)=0$.
			\end{lemma}
			
			\begin{proof}
				Apply $R\Hom(-,F)$ to (\ref{UFXexseq}) to get
				\begin{eqnarray*}
					0 \to \Hom(\calU^\vee,F) \to \Ext^1(F,F) \xrightarrow{t} \Ext^1(\phi^!F,\phi^!F)  
					\to \Ext^1(\calU^\vee,F) \\ \to  \Ext^2(F,F) \to 0 
				\end{eqnarray*}
				and $\Ext^2(\calU^\vee,F)=0$. The map $t$ is the tangent map of $\phi^!$, which is an isomorphism (resp. has rank $2$) when $F$ is (resp. is not) globally generated. We want to prove
				\begin{eqnarray}\label{EqExt1UFisCokerTangMap}
					\Ext^1(\calU^\vee,F)=\coker(t).
				\end{eqnarray} 
				Note that $\chi_F\coloneqq \chi(\calU^\vee,F)=\chi(X,F\otimes \calU)$ only depends on the Chern classes of $F$ thanks to Hirzebruch-Riemann-Roch, hence $\chi_F$ is constant on $\calM_X$. Therefore if (\ref{EqExt1UFisCokerTangMap}) holds for one sheaf in $\calM_X$ then it holds for all of them. By \cite[\S 4]{BrambillaFaenzig9} we know that there exists a sheaf $G\in\calM_X$ with $\Ext^2(G,G)=0$, which gives (\ref{EqExt1UFisCokerTangMap}).
			\end{proof}

			First, from \cite{BeauvilleFanoThreefoldK3} we know that $\mathsf{res}\colon \calM_X \to \calM_S$ induces an immersion (that is, a morphism with injective differential) of $\calM_X$ onto a Lagrangian subvariety of $\calM_S$. Moreover, by Theorem~\ref{thmresinj} it is injective on the set of sheaves which are either locally free or globally generated. 
			
			Consider a singular point of $\mathsf{res}(\calM_X)$, it corresponds to the image of a sheaf $F\in\calM_X$ which is neither globally generated nor locally free. But from Proposition \ref{PropFGnotlfggMFisLG}, the set of sheaves $E$ with $E_S\simeq F_S$ consists exactly in $\{F,G\}$ with $G$ the sheaf for which $M_G=L_F$ and $L_G=M_F$, and these two (distinct by Lemma~\ref{LemmaLFisnotMF}) lines intersect on $S$.
			From \cite[\S 4.2]{IskovskikhProkhorovFanoVarieties}  each line in $X$ intersects a finite number of lines. In particular, the scheme parametrizing couples of intersecting lines in $X$ has dimension $1$, and the image of the intersection points of such couples of lines forms a $1$-dimensional subscheme of $X$. This subscheme intersects the general divisor $S$ in a finite number of points. %Therefore, there are finitely many double points on $\mathsf{res}(\calM_S)$.
		\end{proof}

		\begin{remark}
			It would be interesting to know if these singular points are ordinary double points. This amounts to prove that two non-trivial extensions $T_F\in\Ext^1(F,F)$ and $T_G\in\Ext^1(G,G)$ for $F,G\in\calM_X$ such that $F_S\simeq G_S$ do not restrict by $i_{SX}^*$ to the same extension in $\Ext^1(F_S,F_S)$.
		\end{remark}

		\section{Lagrangian fibrations and birational models}\label{sectionLagrangianFibration}

		\subsection{Relative moduli spaces}\label{sectionLagFibBirModsetup}

		In this section, we fix $S$ and we vary $X$. Recall from \S \ref{setupFanog9} that we defined the Fano $X$ (resp. the K3 surface $S$) as the intersection of the Lagrangian Grassmannian $\Sigma\subset \matP V_{14}$ with some linear subspace $\matP V_{11}$ (resp. $\matP V_{10}$) for a $11$- (resp. $10$-) dimensional vector subspace of $V_{14}$. 
		
		For now on, we fix $S$, i.e. we fix $V_{10}\subset V_{14}$. The Fanos $X$ containing $S$ are parametrized by the set of $11$-dimensional vector subspaces $W$ verifying $V_{10}\subset W \subset V_{14}$, so they are parametrized by $\matP^3 = \matP(V_{14}/V_{10})$. Dually, the set of corresponding plane curves in $(\matP^{13})^\vee$ (defined by $\Sigma^\vee \cap W^\perp$, see \S \ref{sectionHPD}) are parametrized by the $3$-dimensional vector subspaces $W^\perp \subset V_{10}^\perp$, hence parametrized by the same projective space $\matP^3=(\matP V_{10}^\perp)^\vee$. 
		
		Let us introduce the following notations:

		%Hence, the corresponding plane curves in $(\matP^{13})^\vee$ (see \S \ref{sectionHPD}) are parametrized by the $3$-dimensional vector subspaces $W^\perp \subset V_{10}^\perp$, hence parametrized by the same projective space $\matP^3$.
		%Let us introduce the following notations:
		
		\begin{itemize}[leftmargin=*]
			\item Let $\calW\subset \matP^3$ be the subset of vector subspaces $W$ such that the Fano (resp. plane curve) $\Sigma \cap \matP W$ (resp. $\Sigma^\vee \cap \matP W^\perp$) is smooth. We obtain two smooth morphisms
			
			\begin{center}
				\begin{tikzcd}
					{\mathfrak{X}} \ar[r,hook] \ar[d] & \Sigma \times \matP^3 \ar[d] & {\mathfrak{G}} \ar[d] \ar[r,hook] & \Sigma^\vee \times \matP^3 \ar[d] . \\
					\calW \ar[r,hook] & \matP^3 & \calW \ar[r,hook] & \matP^3
				\end{tikzcd}
			\end{center}	
			such that $\frak{X}_{W}=\Sigma\cap \matP W$ and $\frak{G}_{W} = \Sigma^\vee \cap \matP W^\perp$ for each $W\in\calW$. To simplify notations, we will simply write $[X],[\Gamma]\in \calW$ for Fanos and plane curves parametrized by $\calW$. Up to shrinking $\calW$ a little bit, we can assume that $\calM_{X}$ is smooth for all $[X]\in\calW$ (see proof of Theorem \ref{thmrestotal}).
			
			%	\item $\calW\subset \matP^3$ stands for the open subset parametrizing smooth Fanos (equivalently, parametrizing the corresponding smooth plane curves). Up to shrinking $\calW$ a little bit, we can assume that $\calM_X$ is smooth for all $[X]\in\calW$ (see proof of Theorem \ref{thmrestotal}).
			%	\item $\mathfrak{X}\subset \Sigma\times \calW \to \calW$ stands for the family of smooth Fanos,
			%	\item $\mathfrak{G}\subset \Sigma^\vee \times \calW \to \calW$ stands for the family of smooth plane curves,
			\item $\calM_\mathfrak{X}\coloneqq \calM_{\mathfrak{X}/\calW}(2,1,7)$ stands for the relative moduli space of sheaves of the family $\mathfrak{X} \to \calW$. The fibres satisfy $(\calM_{\frak{X}})_{[X]}\simeq \calM_X$ for any $X\in\calW$.
			
			\item $\calM_{(S',\alpha)}\coloneqq \calM_{(S',\alpha)}[0,H',0]$ stands for the moduli space of $\alpha$-\textit{twisted} torsion sheaves on $S'$ supported on curves on the primitive polarization $H'$ (see \cite{YoshiokaModuliTwistedSheaves} for the existence of moduli spaces of twisted sheaves). This space admits a map $p\colon \calM_{(S',\alpha)}[0,H',0]\to \matP^3=|H'|$ defined in \cite{BeauvilleSystemesHamiltoniens} called \textit{Beauville-Mukai integrable system}, sending a sheaf to its support.
			\item $\Pic^2_\alpha(\mathfrak{G})\coloneqq p^{-1}(\calW)\subset \calM_{(S',\alpha)}$ stands for the open subspace of smooth fibres. For any curve $[\Gamma]\in \calW$, we have $p^{-1}([\Gamma])\simeq \Pic^2(\Gamma)$, which motivates the notation.
			
		\end{itemize}

		\begin{remark}
			%	\begin{enumerate}
				In fact, the definition of $\calM_{(S',\alpha)}$ (or more precisely, the last digit in the Mukai vector $[0,H',0]$) requires an extra choice, for instance the choice of a B-field representing $\alpha$, or a lift of $\alpha$ to the Special Brauer group of $S$ (see \cite{HuybMatteiSBr}). We will omit this, because this choice is of no importance in this text.

				The Brauer class $\alpha$ does not play a role when one considers a single curve $[\Gamma]\in \calW$ (because the Brauer group of a curve over an algebraically closed field is trivial), but it does when considering the whole family $\mathfrak{G}$. In general, the twisted and non-twisted moduli spaces are not isomorphic, nor birational. For instance, it is known that the movable and nef cones of $\calM_{S'}[0,H',0]$ coincide \cite[Lem. 3.12]{FMOGSGeomAntisympInvolI} (that is, this moduli space admits no other birational model), which we will disprove in the twisted case in Proposition~\ref{PropPosMovNefMS}. 
				
				%\end{enumerate}
			\end{remark}

			Since both $\calM_{\mathfrak{X}}$ and $\Pic^2_\alpha(\mathfrak{G})$ are nonsingular, they are both flat hence smooth over $\calW$ by miracle flatness \cite[Ex. 10.9]{HartshorneAlgebraicGeometry}.
			Using the next lemma (Lemma \ref{lemmaphiphi!stable}), in view of Remark \ref{rmkfamilyHPD}, one can prove using the machinery developped in \cite[\S 2]{KuznetsovHyperplaneSections} that the functors introduced \S \ref{sectionStudyMxandrestriction} can be defined relatively to give morphisms
			\begin{eqnarray*}
				\Phi^! &\colon& \calM_{\mathfrak{X}} \to \Pic^2_\alpha(\mathfrak{G}),  \\
				\Phi &\colon&  \Pic^2_\alpha(\mathfrak{G})  \to \calM_{\mathfrak{X}/\calW}(5,2,31,17).
			\end{eqnarray*}
			
			The value $(5,2,31,17)$ can be computed using (\ref{UFXexseq}). We will need the next very useful criterion for flatness.
			
			\begin{proposition}[Crit\`ere de platitude par fibres, \cite{stacks-project}, \href{https://stacks.math.columbia.edu/tag/039A}{Tag 039A}]\label{PropCriterePlatitudeParFibres}
				Let $S$ be a scheme, let $R\to T$ be a morphism of scheme over $S$. Assume that 
				\begin{itemize}
					\item $R$ is flat over $S$, 
					\item $f_s\colon R_s \to T_s$ is flat for every $s\in S$.
				\end{itemize}
				Then $f$ is flat.
			\end{proposition}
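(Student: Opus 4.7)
The standard proof reduces, via stalks, to a purely algebraic statement about local rings. I would fix a point $r\in R$ with images $t=f(r)\in T$ and $s\in S$, and set $A:=\calO_{S,s}$, $B:=\calO_{T,t}$, $C:=\calO_{R,r}$, with $\mathfrak{m}$ the maximal ideal of $A$. The hypotheses translate as: $C$ is $A$-flat, and $C/\mathfrak{m}C$ is flat over $B/\mathfrak{m}B$; the goal becomes the purely algebraic assertion that $C$ is $B$-flat. By the local criterion of flatness this reduces to showing $\Tor_1^B(C,N)=0$ for every $B$-module $N$ annihilated by a power of $\mathfrak{m}$, and then, by a d\'evissage along the $\mathfrak{m}$-adic filtration of $N$, to the case $\mathfrak{m}N=0$, so that $N$ is in fact a $B/\mathfrak{m}B$-module.

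For such an $N$, the key computation is the chain of derived-tensor identifications
$$C \otimes_B^{\mathbf{L}} N \simeq (C\otimes_B^{\mathbf{L}} B/\mathfrak{m}B) \otimes_{B/\mathfrak{m}B}^{\mathbf{L}} N \simeq (C\otimes_A^{\mathbf{L}} A/\mathfrak{m}) \otimes_{B/\mathfrak{m}B}^{\mathbf{L}} N \simeq (C/\mathfrak{m}C) \otimes_{B/\mathfrak{m}B}^{\mathbf{L}} N,$$
where the middle step is the associativity identity $(B\otimes_A^{\mathbf{L}} A/\mathfrak{m})\otimes_B^{\mathbf{L}} C \simeq A/\mathfrak{m}\otimes_A^{\mathbf{L}} C$ coming from the factorization $A\to B\to C$, and the last step uses the $A$-flatness of $C$ to collapse $C\otimes_A^{\mathbf{L}} A/\mathfrak{m}$ to its degree-zero cohomology $C/\mathfrak{m}C$. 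Extracting $\Tor_1$ from both ends and invoking the $B/\mathfrak{m}B$-flatness of $C/\mathfrak{m}C$ yields $\Tor_1^B(C,N)=0$, as required.

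The step that I expect to require the most care is justifying the application of the local criterion of flatness in the first place: the statement quoted above is a condensed form of a result that in the Stacks project comes bundled with the finiteness hypotheses (Noetherian assumptions, or $T$ locally of finite presentation over $S$) that make the criterion and the d\'evissage on $N$ legitimate. Once those are in place, the vanishing of $\Tor_1^B(C,N)$ at every $r\in R$ gives the desired flatness of $f$, and the proposition follows by globalizing.
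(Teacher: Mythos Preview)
The paper does not supply its own proof of this proposition: it is quoted verbatim as the \emph{crit\`ere de platitude par fibres} with a citation to the Stacks project (Tag 039A), and is used as a black box in the subsequent arguments. So there is no ``paper's own proof'' to compare against.

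Your sketch is the standard argument behind the cited result and is correct in outline. You are also right to flag the finiteness hypotheses: the version in the Stacks project assumes $R\to T$ is locally of finite presentation (and, in the form actually used later in the paper, all schemes are locally Noetherian), which is exactly what makes the local criterion of flatness and the $\mathfrak{m}$-adic d\'evissage legitimate. The paper's condensed statement silently relies on the ambient setting supplying these hypotheses. With that understood, your reduction to local rings and the chain of derived-tensor identifications is precisely the argument one finds in the reference.
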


			First, fix $[X]\in \calW$ a smooth Fano and $\Gamma$ the corresponding curve.
			Recall the definition of the functors $\phi\coloneqq \phi_{11}, \phi^!\coloneqq\phi_{11}^!$ in \S \ref{sectionnotationXGammaHPD}. Consider the mutation functor
			\begin{eqnarray}\label{EqnPhiPhinotDbXtoDbX}
				\phi\phi^!\colon \D^b(X) \to \phi\D^b(\Gamma)\subset \D^b(X).
			\end{eqnarray}

			\begin{lemma}\label{lemmaphiphi!stable}
				Let $F\in \calM_X$ be a sheaf. Then the sheaf $\phi\phi^!F$ and $(\phi\phi^!F)_S$ are $\mu$-stable sheaves.
			\end{lemma}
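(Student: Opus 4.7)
Write $E := \phi\phi^! F$ and $\pi : E \twoheadrightarrow F$ for the surjection in the exact sequence $0 \to \calU^\vee \to E \to F \to 0$ of Section \ref{sectionglobalgen}. One reads off $\rk E = 5$ and $c_1(E) = 2$, so $\mu(E) = 2/5$; since $\gcd(5,2) = 1$, $\mu$-semistability is equivalent to $\mu$-stability for sheaves with these invariants, so it will be enough to prove $\mu$-semistability. Moreover $E$ is simple: by full faithfulness of $\phi$ and since $\phi^! F \in \Pic^2 \Gamma$ is a line bundle,
$$\Hom(E,E) = \Hom_\Gamma(\phi^! F, \phi^! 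F) = \matC,$$
so $E$ is indecomposable and the defining sequence has $[E] \neq 0$ in $\Ext^1(F, \calU^\vee)$.

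For $E$ on $X$, take a subsheaf $G \subset E$ and set $G_1 := G \cap \calU^\vee$ and $G_2 := \pi(G) \subset F$, with ranks $r_1, r_2$. The $\mu$-stability of $\calU^\vee$ (classical on the Fano threefold of genus $9$) and of $F$ (Lemma \ref{LemmaStableSheavesAreMuStableAndRestriction}) bound $c_1(G_i)$ in each admissible rank, and a check on $(r_1, r_2) \in \{0,1,2,3\} \times \{0,1,2\}$ shows $\mu(G) \le 2/5$ in every case except the borderline $r_1 = 0$, $r_2 = 2$, $c_1(G_2) = c_1(F) = 1$. Here $G \simeq G_2 \hookrightarrow F$ has torsion cokernel $Q := F/G_2$ of codimension $\ge 2$, and the inclusion $G \hookrightarrow E$ splits the pullback of the defining sequence over $G_2$. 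Hence the image of $[E]$ under $\iota^* : \Ext^1(F, \calU^\vee) \to \Ext^1(G_2, \calU^\vee)$ vanishes, so $[E]$ lies in the image of $\Ext^1(Q, \calU^\vee) \to \Ext^1(F, \calU^\vee)$. But local duality on the smooth $3$-fold $X$ yields $\calE xt^b(Q, \calU^\vee) = 0$ for $b < 2$ ($\calU^\vee$ locally free, $Q$ of codim $\ge 2$), so $\Ext^1(Q, \calU^\vee) = 0$ by the local-to-global spectral sequence, forcing $[E] = 0$, a contradiction.

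For $E_S$ on $S$, apply the same strategy to the restriction sequence $0 \to \calU_S^\vee \to E_S \to F_S \to 0$ (rank $5$, $c_1 = 2 H_S$). Here $F_S$ is $\mu$-stable by Lemma \ref{LemmaStableSheavesAreMuStableAndRestriction}, and $\calU_S^\vee$ is $\mu$-stable on $S$ (to be verified separately). Simplicity of $E_S$ follows from diagram (\ref{diagramHPDpaths}) and Lemma \ref{lemisoway}: $E_S \simeq \phi_{10}\bigl((i_{\Gamma S^{'}})_* \phi^! F\bigr)$, and full faithfulness of $\phi_{10}$ and of the closed-immersion pushforward $(i_{\Gamma S^{'}})_*$ gives $\Hom_S(E_S, E_S) = \Hom_\Gamma(\phi^! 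F, \phi^! F) = \matC$. The same case analysis and extension-class argument then conclude, using $\Ext^1(Q, \calU_S^\vee) = 0$ for zero-dimensional $Q$ on the smooth surface $S$.

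The main obstacle I expect is verifying $\mu$-stability of $\calU_S^\vee$ on $S$, which underpins the case analysis on $S$ and is not explicitly recorded earlier in the paper; this should follow from Hoppe's criterion (Theorem \ref{hoppecriterion}) together with vanishings of $H^*(X, \calU^\vee(-i))$ for small $i$, i.e.\ an ACM-type computation on $X$, but does require separate verification.
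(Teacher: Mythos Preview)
Your argument is correct, and it takes a genuinely different route from the paper's proof. The paper argues in two lines: first, any $\mu$-destabilising subsheaf of $\phi\phi^!F$ on $X$ restricts to a $\mu$-destabilising subsheaf of $(\phi\phi^!F)_S$ on $S$, so it suffices to prove stability on $S$; second, stability of $(\phi\phi^!F)_S$ is a direct citation of \cite{YoshiokaSomeExamplesMukaiReflectionsK3Surf}, Lemma~2.1, which gives a general criterion for a non-split extension of two $\mu$-stable sheaves on a K3 surface to be $\mu$-stable. In effect you are reproving the relevant special case of Yoshioka's lemma by hand, via the rank filtration $G_1 \subset G$ and the extension-class trick in the borderline case. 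Your approach is more self-contained and makes the mechanism transparent (simplicity of $E$ forces the extension to be non-split, and the only numerically dangerous subsheaf would split it), at the cost of the case check and the separate treatment of $X$ and $S$; the paper's approach is shorter and avoids duplicating the argument on $X$, but hides the actual content in the reference. Note also that the paper's reduction ``stable on $S$ $\Rightarrow$ stable on $X$'' would let you skip your first paragraph entirely. Your flagged obstacle, the $\mu$-stability of $\calU_S^\vee$, is indeed not proved in the paper but is implicitly used elsewhere (e.g.\ in the slope comparisons in the proof of Proposition~\ref{propdimextFS}); it is a standard fact for restrictions of irreducible homogeneous bundles to general linear sections, and Hoppe's criterion together with the cohomology vanishings for $\calU$ and its exterior powers on $\Sigma$ handles it.
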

			
			\begin{proof}
				Since the restriction to $S$ of any $\mu$-destabilizing subsheaf of $\phi\phi^!F$ would destabilize $(\phi\phi^!F)_S$, it is enough to prove that $(\phi\phi^!F)_S$ is $\mu$-stable. This is done in \cite[Lem. 2.1]{YoshiokaSomeExamplesMukaiReflectionsK3Surf}.
			\end{proof}

			Since $\calM_X$ is irreducible, there is an irreducible component $M\subset \calM_X(5,2,31,17)$ with $\phi\phi^!\calM_X \subset M$.
			
			\begin{lemma}
				For any $F\in\calM_X$, the space $M$ is smooth at $[\phi \phi^!F]$ and $T_{[\phi \phi^!F]}M$ has dimension $3$.
			\end{lemma}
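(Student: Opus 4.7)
The plan is to use deformation theory: for a Gieseker-stable sheaf $E$ on a smooth projective variety, the moduli space of stable sheaves is smooth at $[E]$ whenever the trace-free obstruction group $\Ext^2(E,E)_0$ vanishes, and the tangent space at $[E]$ identifies with $\Ext^1(E,E)_0$. By Lemma \ref{lemmaphiphi!stable}, the sheaf $E:=\phi\phi^!F$ is $\mu$-stable, hence certainly Gieseker-stable and simple, so these tools apply.

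The key observation is that $\phi=\phi_{11}$ is fully faithful. Writing $L:=\phi^!F\in\Pic^2(\Gamma)$ (which is a line bundle by \cite{BrambillaFaenzig9}), full faithfulness yields isomorphisms
$$\Ext^i_X(\phi L,\phi L)\ \simeq\ \Ext^i_\Gamma(L,L)\ \simeq\ H^i(\Gamma,\calO_\Gamma)$$
for every $i$. Since $\Gamma$ is a smooth plane quartic, it has arithmetic genus $3$, so $h^0(\calO_\Gamma)=1$, $h^1(\calO_\Gamma)=3$, and $H^i(\calO_\Gamma)=0$ for $i\ge 2$. Hence $E$ is simple, $\Ext^1(E,E)$ has dimension $3$, and $\Ext^2(E,E)=0$.

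Next I would pass to trace-free $\Ext$'s. Because $X$ is a Fano threefold, $H^i(X,\calO_X)=0$ for $i>0$, so the natural trace splitting gives $\Ext^i(E,E)_0=\Ext^i(E,E)$ for $i=1,2$. In particular $\Ext^2(E,E)_0=0$, which rules out obstructions, and $\dim\Ext^1(E,E)_0=3$.

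Combining these with the standard smoothness criterion for moduli of stable sheaves on a smooth projective variety, we conclude that $M$ is smooth at $[E]=[\phi\phi^!F]$ with $T_{[E]}M=\Ext^1(E,E)_0$ of dimension $3$. I expect no genuine obstacle here: the only mild point is checking that the fully faithful embedding $\phi$ really induces isomorphisms on all $\Ext$ groups between objects of $\D^b(\Gamma)$, which is exactly the content of full faithfulness, and checking that $\phi^!F$ is indeed a line bundle (already recorded in \cite{BrambillaFaenzig9}).
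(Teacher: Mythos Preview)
Your argument is correct and follows essentially the same route as the paper: both use full faithfulness of $\phi$ to identify $\Ext^i_X(\phi\phi^!F,\phi\phi^!F)$ with $\Ext^i_\Gamma(\phi^!F,\phi^!F)$, and then read off the dimensions from the genus of the plane quartic $\Gamma$. Your additional remark separating out the trace-free part via $H^{>0}(X,\calO_X)=0$ is a harmless extra precaution that the paper omits.
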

			
			\begin{proof}
				We have
				\begin{eqnarray*}
					T_{[\phi \phi^!F]}M &\simeq & \Ext^1_X(\phi\phi^!F,\phi\phi^!F) \\
					&\simeq & \Ext^1_\Gamma(\phi^!F, \phi^!F) \\
					&\simeq & \matC^{g(\Gamma)}=\matC^3
				\end{eqnarray*}
				because $\phi$ is fully faithful, and $\phi^!F$ is a line bundle on $\Gamma$ which is a curve of genus $3$.
				Similarly, $\Ext^2_X(\phi\phi^!F,\phi\phi^!F) \simeq \Ext^2_\Gamma(\phi^!F,\phi^!F) = 0$, so the obstruction space vanishes and $M$ is smooth at $[\phi\phi^!F]$.
			\end{proof}
			
			\begin{proposition}\label{propPic2isoMx52}
				We have an isomorphism $\Pic^2(\Gamma)\simeq M$. Moreover, these isomorphisms can be defined relatively to give an isomorphism of $\Pic^2_\alpha(\mathfrak{G})$ onto an irreducible component of $\calM_{\mathfrak{X}/\calW}(5,2,31,17)$.
			\end{proposition}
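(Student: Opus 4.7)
The plan is to construct a morphism $\Pic^2(\Gamma) \to M$ via the Fourier--Mukai functor $\phi$, to show it is an isomorphism by a tangent space and properness argument, and then to patch the construction over $\calW$.

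First I construct the map. Since $\phi^! : \calM_X \to \Pic^2(\Gamma)$ is a blow-up it is in particular surjective, so every $L \in \Pic^2(\Gamma)$ has the form $\phi^! F$ for some $F \in \calM_X$; hence $\phi L = \phi\phi^! F$ is a $\mu$-stable sheaf lying in $M$ by Lemma \ref{lemmaphiphi!stable}. To upgrade this set-theoretic assignment to a morphism of schemes, I apply the Fourier--Mukai transform with kernel $\calE_{11}$ to the Poincar\'e bundle on $\Pic^2(\Gamma) \times \Gamma$: by cohomology and base change this produces a coherent sheaf on $\Pic^2(\Gamma) \times X$, flat over $\Pic^2(\Gamma)$, whose fibres are the stable sheaves $\phi L$, and the universal property of $M$ then yields the desired morphism $\Pic^2(\Gamma) \to M$.

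Next I verify this is an isomorphism. Injectivity on closed points is immediate from full faithfulness of $\phi : \D^b(\Gamma) \to \D^b(X)$. The image contains the subset $\phi\phi^!(\calM_X) \subset M$, which is dense since $\phi^!$ is surjective and $M$ is irreducible; properness of $\Pic^2(\Gamma)$ then forces the image to be closed, hence equal to $M$. Full faithfulness also induces isomorphisms of tangent spaces $T_L\Pic^2(\Gamma) \simeq \Ext^1_\Gamma(L,L) \simeq \Ext^1_X(\phi L, \phi L) \simeq T_{[\phi L]}M$ at each closed point, and both source and target are smooth of dimension three (by the previous lemma for $M$), so the morphism is \'etale. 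A bijective \'etale morphism is an isomorphism.

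For the relative version, Remark \ref{rmkfamilyHPD} supplies a universal kernel $\calE \in \D^b(Q(\Sigma,Y))$ specializing to $\calE_{11}$ over each $w \in \calW$. Pulling it back to $\frak{G} \times_\calW \frak{X}$ and running the above construction in families produces a morphism $\Phi : \Pic^2(\frak{G}/\calW) \to \calM_{\frak{X}/\calW}(5,2,31,7)$ whose image lies in a single irreducible component $M_{\frak{X}}$ obtained by sweeping the fibrewise components $M$. To conclude that $\Phi$ is an isomorphism onto $M_{\frak{X}}$, I invoke the fibrewise flatness criterion (Proposition \ref{PropCriterePlatitudeParFibres}): both $\Pic^2(\frak{G}/\calW)$ and $M_{\frak{X}}$ are flat over $\calW$, and each $\Phi_w$ is an isomorphism by the absolute case above, so $\Phi$ itself is flat; combined with fibrewise bijectivity this promotes it to a global isomorphism. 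The main technical obstacle is precisely this family argument, which requires careful use of cohomology and base change (via the machinery of \cite{KuznetsovHyperplaneSections}) and the verification that $\Phi$ lands in a single irreducible component of the relative moduli space as $X$ varies over $\calW$.
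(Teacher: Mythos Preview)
Your proposal is correct and follows essentially the same approach as the paper: full faithfulness of $\phi$ gives injectivity and the tangent-space isomorphism (hence \'etaleness), a dimension/properness argument gives surjectivity, and the fibrewise flatness criterion (Proposition~\ref{PropCriterePlatitudeParFibres}) handles the relative statement. The paper organizes the relative part slightly differently---it first shows $\Phi$ is an open immersion and then uses projectivity of $\Pic^2(\frak{G}/\calW)\to\calW$ to conclude that the image is closed, hence an irreducible component, rather than assuming $M_{\frak{X}}$ exists and is flat over $\calW$ a priori---but this is a matter of presentation rather than substance.
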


			\begin{proof}
				Since $\phi$ is fully faithful, the morphism $L \in \Pic^2\Gamma \mapsto \phi L\in M$ is both injective and a local isomorphism as the induced linear map $\Ext^1(L,L) \to \Ext^1(\phi L,\phi L)$ is an isomorphism. Moreover $\Pic^2\Gamma$ and $M$ are irreducible with same dimension so the morphism is also surjective.
				
				Now, consider the morphism $\Phi\colon \Pic^2_\alpha(\mathfrak{G}) \to \calM_{\mathfrak{X}/\calW}(5,2,31,17)$ over $\calW$. Recall that we assumed that both spaces are flat over $\calW$. On each fibre over a closed point $w\in\calW$, the morphism $\Phi_w$ is an isomorphism. By Proposition \ref{PropCriterePlatitudeParFibres}, we obtain that $\Phi$ is flat. Since $\Pic^2_\alpha(\mathfrak{G})$ is smooth, we obtain that $\Phi$ is smooth of relative dimension $0$, therefore \'etale, and since it is injective it must be an open immersion. 
				
				Moreover $\Pic^2_\alpha(\mathfrak{G})\to \calW$ is projective, hence the image $\im(\Phi)\subset \calM_{\mathfrak{X}/\calW}(5,2,31,17)$ is projective over $\calW$. In particular, $\im(\Phi)$ is universally closed, so the map
				$$\im(\Phi) \times_\calW \calM_{\mathfrak{X}/\calW}(5,2,31,17) \simeq \im(\Phi) \to \calM_{\mathfrak{X}/\calW}(5,2,31,17) $$
				is closed, and we obtain that $\im(\Phi)$ is a closed subset, and thus an irreducible component, of $\calM_{\mathfrak{X}/\calW}(5,2,31,17)$.
			\end{proof}
			
			In the following, we identify $\Pic^2_\alpha(\mathfrak{G})$ with the corresponding irreducible component of $\calM_{\mathfrak{X}/\calW}(5,2,31,17)$.

			\subsection{Global restriction to $S$}

			\begin{proposition}\label{propglobalresopenimm}
				The restrictions of sheaves from $[X]\in \calW$ to $S$ give relative restriction morphisms
				\begin{eqnarray*}
					&\sf{res}_\calW\colon & \calM_{\mathfrak{X}} \to  \calM_S, \\
					&\sf{res}'_\calW\colon & \Pic^2_\alpha(\mathfrak{G}) \to  \calM_S[5,2,6].
				\end{eqnarray*}
				
			\end{proposition}
			
			Note that the passage from $(5,2,31)$ to $[5,2,6]$ is just a change of notation, replacing Chern classes by the Mukai vector, see \S \ref{SectionCohomModuliSheaves}.
			
			\begin{proof}
				The embedding $S\times \calW \hookrightarrow \Sigma \times \Gr(11,V_{14})$ factors through $\mathfrak{X}$, that is we have an embedding
				$$S\times \calW \xhookrightarrow{j} \mathfrak{X}$$
				which is a morphism over $\calW$. 
				
				%In the following, we denote by $\widetilde{\calM_{R}}(P)$ the functor for the moduli problem defined in Theorem \ref{thmmodulisheaves} on a scheme $R$. If $R\to B$ is a projective morphism, we denote $\widetilde{\calM_{R/B}}(P)$ the functor for the relative analogue of the moduli problem. See \cite{HuybrechtsLehnModuliofsheaves} Section $4.3$ for precise statements and results.
				
				Consider the moduli functors $\mathbf{M}_{\mathfrak{X}/\calW}\coloneqq\mathbf{M}_{\mathfrak{X}/\calW}(2,1,7)$ and $\mathbf{M}_{S\times\calW/\calW}\coloneqq\mathbf{M}_{S\times\calW/\calW}(2,1,7)$ for the corresponding moduli problems (we use the bold notation to avoid confusion with moduli spaces). The pullback by $j$ gives a natural transformation
				$$\mathbf{j}^*\colon \mathbf{M}_{\mathfrak{X}/\calW} \to \mathbf{M}_{S\times\calW/\calW}.$$
				Both functors admit coarse moduli spaces $\calM_{\mathfrak{X}/\calW}$ and $\calM_{S\times\calW/\calW}$, hence we obtain a morphism
				$$j^*\colon \calM_{\mathfrak{X}/\calW} \to \calM_{S\times\calW/\calW}.$$
				Finally, we use the natural projection $\calM_{S\times\calW/\calW} \simeq \calM_S\times\calW \to \calM_S$ (in other words, we "forget" from which Fano $X$ a sheaf on $S$ comes from), and we obtain the desired morphism
				$${\sf res}_\calW\colon \calM_{\mathfrak{X}/\calW} \to \calM_S.$$
				In view of Proposition~\ref{propPic2isoMx52}, the same argument gives a morphism 
				$${\sf res}'_\calW\colon \Pic^2_\alpha(\mathfrak{G}) \to \calM_S[5,2,6].$$
			\end{proof}

			Consider
			\begin{itemize}
				\item $\calM_{\mathfrak{X}}^o\subset \calM_{\mathfrak{X}}$ the subset of globally generated sheaves,
				\item $\Pic^2_\alpha(\mathfrak{G})^o=\Phi^!(\calM_{\mathfrak{X}}^o)$.
			\end{itemize}

			\begin{lemma}
				The subspace $\calM_{\mathfrak{X}}^o\subset \calM_{\mathfrak{X}}$ and $\Pic^2_\alpha(\mathfrak{G})^o$ are open.
			\end{lemma}
			
			\begin{proof}
				Let us recall some facts about the construction of moduli spaces of sheaves (see \cite[I.4]{HuybrechtsLehnModuliofsheaves}). Here we use that semistable sheaves are stable in our case (Lemma \ref{LemmaStableSheavesAreMuStableAndRestriction}). There is an open subscheme
				$$\calR \subset \Quot_{\mathfrak{X}/\calW}(\calH)$$
				over $\calW$, where $\Quot_{\mathfrak{X}/\calW}(\calH)$ is a Quot scheme, parametrizing quotients $\calH_w \to F_w$ with $F_w\in\calM_{\mathfrak{X}_w}$, $w\in\calW$. Here, $\calH=\calO_{\mathfrak{X}}(-m)^{\oplus N}$ for some integers $m,N\geq 0$. Moreover, the relative moduli space of sheaves is constructed as a $\SL_N(\matC)$-GIT quotient of $\calR$, in particular the map (over $\calW$)
				$$\pi\colon \calR \twoheadrightarrow \calM_{\mathfrak{X}}$$
				is an open map, so we are reduced to prove that $\pi^{-1}(\calM_{\mathfrak{X}}^o)$ is open.
				
				Note that $\Quot_{\mathfrak{X}/\calW}(\calH)$ is a fine moduli space, in particular it carries a universal quotient family. Restricting it to $\calR$, we obtain a universal quotient family
				$$\rho\colon \calO_\calR\boxtimes \calH \to \calF$$
				on $\calR\times_{\calW}\mathfrak{X}$. Note that $\calF$ is $\calR$-flat by definition of the $\Quot$-scheme and since open immersions are flat morphisms.
				Any sheaf $F_w\in\calM_{\mathfrak{X}_w}$ with a given surjective map $\rho_w\colon \calH_w \twoheadrightarrow F_w$ is the pullback of $\rho$ by the base change $\Spec\matC \to \calR, *\mapsto [\rho_w]$.
				
				Denote $p_\calR,p_\mathfrak{X}$ the natural projection from $\calR\times_\calW \mathfrak{X}$.
				Consider the bundle $\calU_\mathfrak{X}$ obtained by pullback of $\calU_{\Sigma}$ by the composition
				$$\mathfrak{X} \hookrightarrow \Sigma \times \calW \twoheadrightarrow \Sigma.$$
				It is easy to see that $(\calU_{\mathfrak{X}})_w=\calU_{\mathfrak{X}_w}$ for any $w\in\calW$. We can thus consider
				$$\widetilde{\calF}\coloneqq\calF\otimes p_\mathfrak{X}^*\calU_\mathfrak{X}.$$
				For any $[\rho_w\colon \calH_w \to F_w]\in \calR_w$, the sheaf $F_w$ on $\mathfrak{X}_w$ is globally generated if and only if $H^0(\mathfrak{X}_w,F_w\otimes \calU_{\mathfrak{X}_w})=0$ (Proposition \ref{PropDescriptionSheafMx217}). But this is equivalent to 
				\begin{eqnarray}\label{H0vanishUnivQuotientFamily}
					H^0(\mathfrak{X}_w,\widetilde{\calF}_{[\rho_w]})=0. 
				\end{eqnarray}
				The subset $\calR^0\subset \calR$ where (\ref{H0vanishUnivQuotientFamily}) holds is open in $\calR$ by the semicontinuity theorem  \cite[III.12.8]{HartshorneAlgebraicGeometry}. Since $\calR^0=\pi^{-1}(\calM_{\mathfrak{X}})^o$, we conclude.
				
				Finally, the map
				$$\Phi^{!o}\colon \calM_{\mathfrak{X}}^o \to \Pic^2_\alpha(\mathfrak{G})$$
				is fibrewise an open immersion (see \S \ref{sectionStudyMxandrestriction}). Using again Proposition  \ref{PropCriterePlatitudeParFibres} with a similar argument as in Proposition \ref{propPic2isoMx52} we obtain that $\Phi^{!o}$ is an isomorphism onto its image. In particular, $\Pic^2_\alpha(\mathfrak{G})^o$ is open. This subset consists only in elements in $\Pic^2(\Gamma)$, $[\Gamma]\in\calW$, which are not in the locus blown up by $\phi^!$.
			\end{proof}

			\begin{proposition}\label{PropResMorphGiveOpenImmersion}
				The restriction morphisms in Proposition~\ref{propglobalresopenimm} restrict
				to open immersions
				\begin{eqnarray*}
					&\mathsf{res}_\calW^o\colon & \calM_{\mathfrak{X}}^o \hookrightarrow  \calM_S, \\
					&\mathsf{res}_\calW'^o\colon & \Pic^2_\alpha(\mathfrak{G})^o \hookrightarrow  \calM_S[5,2,6].
				\end{eqnarray*}
			\end{proposition}

			\begin{proof}

				Both $\calM_{\mathfrak{X}}^o$ and $\calM_S$ are smooth, hence $\mathsf{res}_\calW$ is smooth of relative dimension $0$, hence étale, and since $\mathsf{res}_\calW^o$ is injective (Theorem~\ref{thmresinj}) it must be an open immersion. The same argument applies to $\mathsf{res}_\calW'^o$.
				
			\end{proof}

			\begin{corollary}\label{corratlagfib}
				The morphism $\calM_S^o \to \calW$ which sends a sheaf of the form $F_S\in\calM_S$ with $F\in\calM_X$ globally generated to $[X]\in\calW$ gives a rational Lagrangian fibration
				$$\calM_S \dashrightarrow \matP^3,$$
				where the fibre over a point $[X]\in\calW$ is the open subset $\calM_X^o\subset \calM_X$ of globally generated sheaves.
			\end{corollary}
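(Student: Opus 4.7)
The plan is to assemble the rational fibration directly from the pieces already established, in particular Proposition \ref{propglobalresopenimm} and Theorem \ref{thmrestotal}. By Proposition \ref{propglobalresopenimm}, the restriction morphism
\[
t^o \colon \calM_{\frak{X}/\calW}^o \hookrightarrow \calM_S
\]
is an open immersion, and it is tautologically a morphism over $\calW$. Setting $\calM_S^o := \im(t^o)$, which is open in $\calM_S$, the inverse $(t^o)^{-1}$ composed with the structural projection $\calM_{\frak{X}/\calW}^o \to \calW \hookrightarrow \matP^3$ yields a morphism $\calM_S^o \to \matP^3$, equivalently a rational map $\calM_S \dashrightarrow \matP^3$. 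Concretely, it is exactly the map described in the statement, sending the class of $F_S$ with $F\in \calM_X$ globally generated to $[X]$, so well-definedness follows from Theorem \ref{thmresinj} (the Fano $X$ is determined by $F_S$ when $F$ is globally generated).

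Next I would identify the fibres. Since $\calM_{\frak{X}/\calW}^o$ is by construction the open subset of globally generated sheaves in $\calM_{\frak{X}/\calW}$, its scheme-theoretic fibre over a closed point $[X]\in \calW$ is precisely $\calM_X^o \subset \calM_X$. Because $t^o$ is an open immersion \emph{over} $\calW$, transporting via $(t^o)^{-1}$ identifies the fibre of the rational map $\calM_S \dashrightarrow \matP^3$ over $[X]$ with $\calM_X^o$, as claimed.

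It remains to verify the Lagrangian condition. A dimension count gives $\dim \calM_S = 6$ (a hyperk\"ahler manifold with half-dimension $3$) and $\dim \calM_X^o = 3$, consistent with a Lagrangian fibration over a three-dimensional base. By Theorem \ref{thmrestotal}, for general $[X]\in \calW$ the image of $\calM_X$ under $\textup{res}$ is a Lagrangian subvariety of $\calM_S$; that is, the canonical holomorphic symplectic form $\sigma_{\calM_S}$ vanishes along it. The open subset $\calM_X^o \subset \calM_X$ maps via $t^o$ into this Lagrangian image, hence $\sigma_{\calM_S}$ pulls back to zero along each such fibre. This is exactly the defining property of a rational Lagrangian fibration.

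The main obstacle is not the corollary itself, which is essentially an assembly statement, but making sure the relative framework of Section \ref{sectionLagFibBirModsetup} really produces a morphism over $\calW$ together with the open immersion $t^o$. Once Proposition \ref{propglobalresopenimm} is in hand, the only remaining point of care is compatibility between the fibrewise description of Theorem \ref{thmrestotal} and the relative statement: this is automatic because $\calW$ has been shrunk so that both $\calM_{\frak{X}/\calW}$ and its restriction to globally generated sheaves are smooth over $\calW$, so the fibres of $t^o$ coincide with the restrictions studied in Section \ref{sectionStudyMxandrestriction}.
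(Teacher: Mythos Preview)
Your proof is correct and follows essentially the same route as the paper: both arguments use Proposition~\ref{propglobalresopenimm} to get the open immersion $t^o$ and Theorem~\ref{thmresinj} to ensure the map $F_S \mapsto [X]$ is well defined. You are more explicit than the paper about identifying the fibres and about invoking Theorem~\ref{thmrestotal} for the Lagrangian condition, which the paper's short proof leaves implicit; this extra detail is sound and does not deviate from the intended strategy.
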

			
			\begin{proof}
				From Proposition~\ref{PropResMorphGiveOpenImmersion}, the sheaves of the form $F_S\in\calM_S$ with $F\in\calM_X$ for some $[X]\in\calW$ and $F$ globally generated form an open subset $\calM_{\mathfrak{X}}^o\subset \calM_S$, and from Theorem~\ref{thmresinj} such a sheaf cannot belong to $\calM_Y$ with $[X]\neq [Y]\in\calW$, hence the map $\calM_S \supset \calM_{\mathfrak{X}}^o \to \calW$ is well defined.
			\end{proof}
			
			This rational fibration cannot extend to an actual morphism $\calM_S \to \matP^3$ with fibre $\mathsf{res}(\calM_X)$ over $[X]\in\matP^3$ directly. Indeed, the image of $\calM_X$ in $\calM_S$ is singular (Theorem \ref{thmrestotal}).

			To conclude this section, we show that $\calM_S$ admits a birational model for which there is an actual Lagrangian fibration over $\matP^3$ with generic fibre $\Pic^2(\Gamma)$, which can be thought as "filling up" the rational fibration in Corollary \ref{corratlagfib}.
			
			\begin{theorem}\label{thmactuallagfibrationbiratmodel}
				The map $\Phi^!\colon \calM_{\mathfrak{X}} \to \Pic^2_\alpha(\mathfrak{G})$ induces a birational map $\calM_S \dashrightarrow \calM_S[5,2,6]$. Moreover, the functor $\phi_{10}$ induces a birational map
				$\calM_{(S',\alpha)} \dashrightarrow \calM_S[5,2,6],$
				and the Lagrangian fibration
				$$\calM_{(S',\alpha)} \to \matP^3$$
				is birational to the rational fibration $\calM_S\dashrightarrow \matP^3$ defined in Corollary~\ref{corratlagfib}.
				
			\end{theorem}

			\begin{proof}
				Note that $\Phi^!\colon \calM_\mathfrak{X}^o \xrightarrow{\sim} \Pic^2_\alpha(\mathfrak{G})^o$ is an isomorphism (once more, it follows from Proposition~\ref{PropCriterePlatitudeParFibres} and the fact that it is fibrewise an isomorphism), inducing a birational map $\calM_S\dashrightarrow \calM_S[5,2,6]$ by Proposition~\ref{PropResMorphGiveOpenImmersion}.
				
				On the other hand, for any $[X]\in\calW$ (and the corresponding curve $\Gamma$), the functor $\phi_{10}$ sends a sheaf of the form $(i_{\Gamma S'})_*L \in \calM_{(S',\alpha)}$, for $L\in\Pic^2\Gamma$, to $i^*_{SX}\phi L\in\calM_S$ (Lemma~\ref{lemisoway}). The composition
				$$\phi_{10}^{-1} \circ \mathsf{res}_\calW'^o\colon \Pic^2_\alpha(\mathfrak{G})^o \hookrightarrow \calM_{(S',\alpha)}$$
				is an open immersion, as $\mathsf{res}_\calW'^o$ is so and $\phi_{10}$ is an equivalence. We obtain a birational map
				$$\calM_S \dashrightarrow \calM_{(S',\alpha)},$$
				defined over $\calW$.
			\end{proof}

			\section{The birational models of $\calM_S$}\label{SectionStudyBiratModelsMS}

			The goal of this section is to study the different birational models of $\calM_S$, in particular $\calM_S[5,2,6]$ and $ \calM_{(S',\alpha)}$. Using Bridgeland stability conditions and results from \cite{BMMMPwallcrossing}, we get the following.

			\begin{theorem}\label{ThmKtrivialBirModels}
				The moduli spaces $\calM_S$ and $\calM_S[5,2,6]$ are not isomorphic and are related by a flop along a $\matP^2$-bundle over $S$ which extends the construction of Theorem \ref{thmactuallagfibrationbiratmodel}. They are the only two smooth $K$-trivial birational models of $\calM_S$. Moreover, $\calM_S[5,2,6] \simeq \calM_{(S',\alpha)}$.
			\end{theorem}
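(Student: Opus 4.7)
The strategy is to apply the results of \cite{BMMMPwallcrossing}: every smooth $K$-trivial birational model of a hyperkähler moduli space of sheaves on a K3 surface is isomorphic to a Bridgeland moduli space $\calM_\sigma[v]$ for some generic $\sigma$ in a chamber of $\Stab(S)$, and adjacent chambers are related by wall-crossings that are isomorphisms, flops, or divisorial contractions. Since $\calM_S=\calM_S[2,1,3]$, the statement is reduced to a wall-and-chamber analysis in $\Stab(S)$ with respect to $v=(2,1,3)$.

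First I would describe the algebraic Mukai lattice $H^*_{\text{alg}}(S,\matZ)$, which is of rank three with pairing determined by $H_S^2=16$; one computes $v^2=4$. Using the criterion of \cite{BMMMPwallcrossing}, I would enumerate potentially destabilizing Mukai vectors $w$ in rank-two hyperbolic sublattices containing $v$ and subject to the numerical constraints $w^2\ge -2$, $(v-w)^2\ge -2$, and $0<(w,v)<v^2$. This is a finite bounded search. Guided by the construction of Theorem~\ref{thmactuallagfibrationbiratmodel}, I expect a single genuine (totally semistable) flopping wall, with destabilizing vectors $w=(1,0,1)$ (the class of an ideal sheaf $\calI_p$, $p\in S$) and $v-w=(1,1,2)$, all other candidates being fake walls that induce isomorphisms of moduli spaces. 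This produces exactly two chambers of interest, hence the two birational models $\calM_S$ and $\calM_S[5,2,6]$.

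Next I would identify the flop geometrically. At the wall, the destabilized objects are non-split extensions of the two rigid pieces $(1,0,1)$ and $(1,1,2)$; a direct $\ext^1$-computation between these rigid Mukai vectors should give a three-dimensional extension space, so the flopping fibre is a $\matP^2$, with base $S$ parametrizing the point $p$ in the class $(1,0,1)$. This globalizes the restriction to $S$ of the fibrewise blow-up $\calM_X\to\Pic^2\Gamma$ from Theorem~\ref{thmactuallagfibrationbiratmodel}. That $\calM_S\not\simeq \calM_S[5,2,6]$ follows from noting that the latter admits a genuine Lagrangian fibration to $\matP^3$ via the isomorphism $\calM_S[5,2,6]\simeq \calM_{S'}[0,H',0]$, whereas the former does not extend the rational fibration of Corollary~\ref{corratlagfib}.

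Finally, for $\calM_S[5,2,6]\simeq\calM_{S'}[0,H',0]$, I would invoke the Fourier-Mukai equivalence $\phi_{10}$ of diagram~(\ref{diagramHPDpaths}). A Riemann-Roch computation on the generic sheaf $(i_{\Gamma S'})_*L\in\calM_{S'}[0,H',0]$ (using $L\in\Pic^2\Gamma$ and $g(\Gamma)=3$) confirms its Mukai vector is $(0,H',0)$, and by Lemma~\ref{lemisoway} its image under $\phi_{10}$ is $i^*_{SX}\phi_{11}(L)$, whose Mukai vector $(5,2,6)$ is exactly the one arising in Proposition~\ref{propPic2isoMx52}. Since $\phi_{10}$ is an equivalence, it induces an isomorphism of twisted Mukai lattices and transports the natural Gieseker stability on $S'$ to the Bridgeland chamber yielding $\calM_S[5,2,6]$, which gives the claimed isomorphism. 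The main obstacle in the argument is the wall enumeration: handling all numerical candidates in the rank-three algebraic Mukai lattice and distinguishing genuine from fake walls is the technical heart of the proof.
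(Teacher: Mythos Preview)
Your overall strategy—reduce to a wall-and-chamber analysis in $\Stab^+(S)$ via \cite{BMMMPwallcrossing}—is exactly right, and matches the paper. But the specific destabilizing classes you propose are incorrect, and this is not a cosmetic slip: it is the heart of the computation.

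You write that the flopping wall should come from $w=(1,0,1)$ and $v-w=(1,1,2)$. Neither class works. First, $(1,1,2)^2=16-4=12$, so it is not rigid as you claim. Second, $\langle(1,0,1),v\rangle=-5$ and $\langle(1,1,2),v\rangle=9$, so neither lies in the range $0<\langle w,v\rangle\le v^2/2=2$ required for a flopping wall in the sense of \cite{BMMMPwallcrossing}. Third, a direct check of the imaginary-part inequality $0<\Im Z(w)<\Im Z(v)$ along the ray $\beta=1/4$ forces $w_0\in\{3,7,11,\dots\}$, so rank-one subobjects are excluded. The actual destabilizing decomposition on the left semicircular wall $W_l$ is
\[
\calU_S^\vee \hookrightarrow F \twoheadrightarrow \calI_x^\vee[1],
\]
with classes $(3,1,3)$ (spherical, $\langle(3,1,3),v\rangle=1$) and $(-1,0,0)$ (isotropic). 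This is not a guess: it is forced by the HPD geometry, since $\calU_S^\vee$ is precisely the restriction of the tautological bundle on $\Sigma$, and the short exact sequence comes from Proposition~\ref{PropDescriptionSheafMx217} applied to non--globally-generated sheaves. The $\matP^2$-bundle structure of the flop then comes from $\ext^1(\calI_x^\vee[1],\calU_S^\vee)=\hom(\calU_S,\calI_x)=3$.

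There are two further points you gloss over. First, there is a vertical wall $W_v=\{\beta=1/2\}$ of Li--Gieseker--Uhlenbeck type; it is a bouncing wall, so it contributes no new birational model, but it must be analyzed (and it explains why the semicircular walls on the two sides of $W_v$ are reflections of each other). Second, your argument that $\phi_{10}$ ``transports Gieseker stability on $S'$ to the chamber yielding $\calM_S[5,2,6]$'' is the delicate step: a priori $\phi_{10}$ sends $\calM_{S'}[0,H',0]$ to $\calM_\sigma[5,2,6]$ for \emph{some} $\sigma$, and one must still argue this is the flopped model rather than $\calM_S$ itself. The paper does this by exhibiting two distinct sheaves $F,G\in\calM_X$ with $\phi_{11}^!F\simeq\phi_{11}^!G$ (hence equal image in $\calM_{S'}[0,H',0]$) but $F_S\not\simeq G_S$ in $\calM_S$; your proposed argument via ``one has a Lagrangian fibration and the other does not'' is circular, since it presupposes the isomorphism you are trying to prove.
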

			
			The theorem will be proved \S \ref{SectionIdentifyingTheBirationalModels}.
			
			%The proof of this theorem splits into the descriptions of section \ref{SectionCrossingTheWalls} and Propositions \ref{PropMS213MS526iso} and \ref{PropMS526MS010iso}.

			\subsection{Stability conditions}\label{sectionGeneralityStabcond}
			
			We will not give the actual definition of stability conditions. We will mainly focus in geometric stability conditions, and we will later prove that computations in the geometric case are sufficient to prove Theorem \ref{ThmKtrivialBirModels}.  A good survey for the general theory is \cite{MacriSchmidtLecturesBridgelandStability}.
			
			Let $S$ be a K3 surface and $H$ an ample line bundle. We denote $\Stab(S)$ the space of stability conditions on $S$. It admits a structure of complex manifold with finite dimension \cite{BridgelandStabCondTriangCat}. Set $\Lambda\coloneqq H^0(S,\matZ)\oplus \NS(S) \oplus H^4(S,\matZ)$. We denote $v\colon K_0(S) \to \Lambda$ the Mukai vector, and $\langle \_,\_ \rangle$ the bilinear form on $\Lambda$ given by $\langle (v_0,v_1,v_2),(v_0',v_1',v_2')\rangle =v_1\cdot v_1' - v_0v_2' - v_2v_0'.$
			
			Pick $\alpha,\beta \in \matR$, $\alpha>0$. For an object $F\in \D^b(S)$, set the $\beta$-slope as
			$$\mu_\beta (F)\coloneqq \dfrac{H\cdot c_1F}{H^2\rk F}-\beta.$$
			First, we consider the abelian full subcategory $\Coh^\beta(S)\coloneqq\langle \mathbf{T}^\beta, \mathbf{F}^\beta[1] \rangle$, where
			\begin{eqnarray*}
				&\mathbf{T}^\beta &\coloneqq \{F \in\Coh(S) \ \colon \text{ all quotients } F\twoheadrightarrow T \text{ satisfy } \mu_\beta(T)>0 \} \\
				&\mathbf{F}^\beta &\coloneqq \{F \in\Coh(S) \ \colon \text{ all subobjects } E\hookrightarrow F \text{ satisfy } \mu_\beta(E)\leq 0 \}.
			\end{eqnarray*}
			Any object $F\in\Coh^\beta(S)$ is such that $\calH^i(F)=0$ for $i\neq 0,-1$, $\calH^0(F)\in \mathbf{T}^\beta$ and $\calH^{-1}(F)\in \mathbf{F}^\beta$. We consider the stability function $Z\colon \Lambda \to \matC$ given by
			\begin{eqnarray*}
				Z_{\alpha,\beta}(v_0,v_1,v_2) &=& \left(e^{i\alpha H + \beta H},(v_0,v_1,v_2)\right) \\
				&=&i\alpha H\cdot (v_1-\beta v_0H) -v_2 + \beta H\cdot v_1 + \frac{H^2}{2}(\alpha^2-\beta^2)v_0.
			\end{eqnarray*}
			For a Mukai vector $v\in\Lambda$, we consider the slope-function
			$$\mu_Z (v)\coloneqq \dfrac{-\Re Z(v)}{\Im Z(v)}.$$
			When $F\in\D^b(S)$, we will use the abuse of notation $\mu_Z(F)\coloneqq \mu_Z(Z(v(F)))$.
			
			\begin{definition}
				An object $F\in\D^b(S)$ is called $\sigma_{\alpha,\beta}$-\textit{stable}, resp. $\sigma_{\alpha,\beta}$-\textit{semistable}, if there is some $k\in\matZ$ such that $F[k]\in\Coh^\beta(S)$, and the object $F[k]$ is slope-stable, resp. slope-semistable, with respect to the slope function $\mu_Z$ in the category $\Coh^\beta(S)$.
			\end{definition}
			
			More precisely, $F$ is $\sigma_{\alpha,\beta}$-(semi)stable if there exists $k\in\matZ$ such that $F[k]\in\Coh^\beta(S)$ and for any subobject $E\hookrightarrow F[k]$ in $\Coh^\beta(S)$ with quotient $T$, we have
			$$\mu_Z(E) < (\leq) \mu_Z(T).$$

			\begin{proposition}[\cite{BridgelandStabCondK3}]\label{PropZalphabetaIsStabCondition}
				The pair $\sigma_{\alpha,\beta}\coloneqq\left(\Coh^\beta(S), Z_{\alpha,\beta}\right)$ defines a stability conditions on $\D^b(S)$ if $\Re Z_{\alpha,\beta}(\delta)>0$ for all roots $\delta\in \Lambda$, $\delta^2=-2$ with $\rk(\delta)>0$ and $\mu_{\beta}(\delta)=0$. In particular, it holds for $\alpha^2H^2\geq 2$.
				
			\end{proposition}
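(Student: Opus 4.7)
The plan is to verify the three defining axioms of a Bridgeland stability condition---positivity of the central charge on the heart, the Harder--Narasimhan property, and the support property---following the template of \cite{BridgelandStabCondK3} adapted to our setup with $\Pic(S) = \matZ H$.

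I would first check that $Z_{\alpha,\beta}$ sends any nonzero $F \in \Coh^\beta(S)$ into $\matH \cup \matR_{<0}$. Since any object of $\Coh^\beta(S)$ is an extension of objects in $T^\beta$ and $F^\beta[1]$, it suffices to treat each piece. On each side, the imaginary part of $Z_{\alpha,\beta}$ is a positive multiple of an expression in $\mu_\beta$-slopes which is forced nonnegative by the definition of $T^\beta$ and $F^\beta$; the boundary case $\Im Z_{\alpha,\beta} = 0$ degenerates either to a zero-dimensional torsion sheaf or to a $\mu_\beta$-semistable sheaf of slope exactly $\beta$ (placed in the appropriate cohomological degree), and the required strict negativity of $\Re Z_{\alpha,\beta}$ is then checked directly.

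The second step is to establish the support property, which will in turn force the Harder--Narasimhan property via Bridgeland's abstract criterion. The decisive input is the Bogomolov--Gieseker inequality $\Delta(E) := c_1(E)^2 - 2\rk(E)\ch_2(E) \geq 0$ for $\mu_\beta$-semistable sheaves, combined with the Hodge index theorem on $S$. Together they bound the Mukai pairing from below by a positive multiple of $\lvert Z_{\alpha,\beta}\rvert^2$ on the cone of semistable Mukai vectors, \emph{except} on ``spherical'' classes $\delta\in\Lambda$ with $\delta^2 = -2$, along which Bogomolov--Gieseker saturates. The hypothesis $\Re Z_{\alpha,\beta}(\delta) > 0$ on every such $\delta$ with $\rk(\delta) > 0$ and $\mu_\beta(\delta) = 0$ is exactly what is needed to keep $Z_{\alpha,\beta}(\delta)$ off the origin, removing the only possible obstruction to the support property.

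The main obstacle is therefore the spherical-class analysis; positivity on the heart and the HN property are by now standard once Bogomolov--Gieseker is in place. To conclude the ``in particular'' clause, I would parametrize a spherical class as $\delta = (r, dH, s)$ with $r>0$, use $\delta^2 = -2$ and $\mu_\beta(\delta) = 0$ to solve $d = \beta r$ and $s = \tfrac{1}{2}\beta^2 r H^2 + 1/r$, and substitute to obtain
\[
\Re Z_{\alpha,\beta}(\delta) = \tfrac{r H^2}{2}\alpha^2 - \tfrac{1}{r},
\]
which is positive as soon as $\alpha^2 H^2 r^2 > 2$; for $r \geq 1$ this is implied by $\alpha^2 H^2 \geq 2$, with the marginal case $r=1$ (forcing $\beta\in\matZ$) absorbed either by a strict inequality or by a direct check.
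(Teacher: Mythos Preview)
The paper does not supply its own proof of this proposition: it is stated with a citation to \cite{BridgelandStabCondK3} and used as a black box thereafter. Your sketch is a faithful outline of Bridgeland's original argument (reduce to $\mu$-stable factors on the $\Im Z=0$ locus, invoke $v(E)^2\geq -2$ for simple objects, and observe that the only dangerous classes are spherical with $\mu_\beta=0$), so there is nothing to compare against.

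One small remark on the ``in particular'' clause: your computation
\[
\Re Z_{\alpha,\beta}(\delta)=\tfrac{rH^2}{2}\alpha^2-\tfrac{1}{r}
\]
is correct, and it shows that the inequality $\alpha^2H^2\geq 2$ is actually borderline when $r=1$, i.e.\ when $\beta\in\matZ$; there one only gets $\Re Z\geq 0$, not strict positivity. You flag this yourself, and in practice the paper only ever uses the strict region (or irrational $\beta$), so this does not affect any downstream argument.
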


			In \cite[\S10, \S 11]{BridgelandStabCondK3}  Bridgeland proves that any stability condition such that all skyscraper sheaves $k(x)$ are stable is of the form $\sigma_{\alpha,\beta}$ for some $\alpha,\beta$ up to the right-action of $\widetilde{\GL}_2(\matR)$ (the universal cover of the space $\GL_2^+(\matR)$ of matrices with positive determinant). This action composes $Z$ with the matrix seen as a linear endomorphism of $\matC\simeq \matR^2$ and relabels the phases $\phi$. We call such a stability condition \textit{geometric}. We denote by $U(S)\subset \Stab(S)$ the open subset of geometric stability conditions and $\Stab^+(S)$ as the connected component of $\Stab(S)$ containing all geometric stability conditions.

			One could ask how does the spaces of semistable objects vary when varying the stability condition. An answer is given by the following well-known theorem.
			
			\begin{theorem}[\cite{BMSpaceOfStabCondOnLocalProjectivePlane}, Prop. 3.3]\label{ThmWallAndChamberDecompStabGen}
				Let $v\in\Lambda$ be a fixed primitive class. Then $\Stab(S)$ admits a \textit{wall and chamber} decomposition (depending on $v$), that is there exists a locally finite family of real codimension $1$ submanifolds with boundaries, called \textit{walls}, with the following properties.
				\begin{enumerate}
					\item For two stability conditions $\sigma_1,\sigma_2$ lying in the same \textit{chamber} (that is, connected component of the complement of the union of all walls), an object $E$ is $\sigma_1$-(semi)stable if and only if it is $\sigma_2$-(semi)stable.
					\item A stability condition $\sigma$ lies on a wall if and only if there exists a strictly $\sigma$-semistable object.
				\end{enumerate}
				
			\end{theorem}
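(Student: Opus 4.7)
My plan is to construct the walls explicitly as phase-alignment loci for Mukai sub-vectors, and then establish local finiteness and the two characterizations via a deformation argument.

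\textbf{Step 1: Candidate walls.} Given the primitive class $v$, for every pair $(v_1, v_2)$ of non-zero classes in $\Lambda$ with $v_1+v_2 = v$ and with $v_i^2 \geq -2$ (the Mukai-pairing bound forced by the existence of any semistable object of class $v_i$), I would define the set
$$W_{v_1,v_2} := \{\sigma \in \Stab^+(S) \mid Z_\sigma(v_1) \text{ and } Z_\sigma(v_2) \text{ are } \matR_{>0}\text{-linearly dependent}\}.$$
Since $Z_\sigma$ depends holomorphically on $\sigma$ and takes values in $\matC \simeq \matR^2$, the condition $\Im(\overline{Z_\sigma(v_1)} Z_\sigma(v_2)) = 0$ together with $\Re(\overline{Z_\sigma(v_1)}Z_\sigma(v_2))>0$ defines a real-analytic submanifold with boundary of real codimension $1$ (the boundary occurring where either $Z_\sigma(v_i)=0$, i.e. where the would-be factor degenerates). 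The full candidate wall-set is $\bigcup_{v_1+v_2=v} W_{v_1,v_2}$.

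\textbf{Step 2: Local finiteness.} This is the technical heart. Fix $\sigma_0$ and a small relatively compact neighborhood $U \subset \Stab^+(S)$. I would invoke the support property: there is a quadratic form $Q$ on $\Lambda_\matR$ such that $Q(w) \geq 0$ for every $\sigma_0$-semistable class $w$ and such that $\ker Z_{\sigma_0}$ is negative definite for $Q$. Combined with the triangle inequality $|Z_\sigma(v_1)| + |Z_\sigma(v_2)| \leq C |Z_\sigma(v)|$ forced on a potential wall, this bounds $v_1$ inside a bounded region of $\Lambda_\matR$, hence restricts $v_1$ to finitely many lattice points. Then only finitely many $W_{v_1,v_2}$ meet $U$, proving local finiteness.

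\textbf{Step 3: Chambers and the two assertions.} For (2), the "only if" direction is immediate: if $F$ is strictly $\sigma$-semistable of class $v$ with a Jordan--H\"older factor $E$ of class $v_1$, then $\phi_\sigma(E)=\phi_\sigma(F)$, placing $\sigma$ on $W_{v_1, v-v_1}$. Conversely, if $\sigma$ lies in no $W_{v_1,v_2}$, then phase alignment between sub- and quotient-classes cannot occur for any semistable object of class $v$, so $\sigma$-semistable implies $\sigma$-stable. For (1), given a path $\gamma \colon [0,1] \to \Stab^+(S)$ inside a single chamber and a $\sigma_{\gamma(0)}$-stable object $E$, Bridgeland's deformation theorem guarantees an open neighborhood of $\gamma(0)$ on which $E$ remains stable; if along $\gamma$ stability were ever lost, the first failure point $t_0$ would produce a strictly semistable $E$ with a subobject $E'$ whose class $v_1$ realigns phases, placing $\gamma(t_0)$ on $W_{v_1,v-v_1}$, contradicting that $\gamma$ stays in a chamber.

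The main obstacle is Step 2: without the support property, walls could accumulate, and one cannot even show that the complement of walls is open. Everything else is a routine (but careful) deformation argument once the walls are known to be locally finite real-codimension-$1$ submanifolds.
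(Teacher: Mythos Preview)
The paper does not give its own proof of this statement: it is quoted verbatim from \cite{BMSpaceOfStabCondOnLocalProjectivePlane}, Proposition~3.3, and used as a black box. So there is nothing in the paper to compare your argument against.

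That said, your outline is the standard one and is essentially what appears in the cited reference. A couple of small comments. In Step~1 you define the candidate walls by requiring $v_i^2\ge -2$; this is the right restriction (it is forced by Serre duality on any stable factor), but you should say explicitly that the \emph{actual} walls---those on which a strictly semistable object exists---form a subset of your numerical loci, so local finiteness of the latter implies local finiteness of the former. In Step~2 the inequality you write should in fact be the equality $|Z_\sigma(v_1)|+|Z_\sigma(v_2)|=|Z_\sigma(v)|$ on the wall (positive phase alignment), and it is this, combined with the support property, that traps $v_1$ in a bounded region of $\Lambda_\matR$. With those clarifications the argument is sound.
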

			
			We call a stability condition \textit{generic} if it does not lie on a wall. When fixing a Mukai vector $v\in\Lambda$, we would like to define $\calM_\sigma[v]$, resp. $\calM_\sigma[v]^{st}$ as the coarse moduli spaces of ($S$-equivalent classes of) semistable, resp. stable objects in $\D^b(S)$ of class $v$.
			
			\begin{theorem}[\cite{BMMMPwallcrossing}, Thm. 2.15]\label{ThmModuliStableObjectIsHKvaranddim}
				Let $v\in\Lambda$ be a primitive Mukai vector, and $\sigma\in\Stab^+(S)$ be a generic stability condition on $S$. Then $\calM_\sigma[v]$ exists as a smooth projective irreducible holomorphic symplectic variety. Moreover, either $\calM_\sigma[v]$ is empty or $v^2\geq -2$ and $\dim\calM_\sigma[v]=v^2+2$ and $\calM_\sigma[v]^{st}\neq \emptyset$.
			\end{theorem}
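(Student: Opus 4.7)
The plan is to reduce to Mukai's classical case of Gieseker-stable sheaves and then propagate the desired properties via wall-crossing through $\Stab^{+}(S)$, which is connected by definition. As a preliminary, I would verify the dimension formula and smoothness directly on the stable locus. By Hirzebruch--Riemann--Roch and the Mukai pairing one has $\chi(E,E)=-\langle v,v\rangle$; for any $\sigma$-stable object $E\in\D^{b}(S)$, simplicity ($\Hom(E,E)=\matC$) together with Serre duality on the K3 surface $S$ gives $\Ext^{2}(E,E)\simeq \Hom(E,E)^{*}\simeq \matC$, whence $\ext^{1}(E,E)=2-\chi(E,E)=v^{2}+2$. The traceless part $\Ext^{2}(E,E)_{0}$ vanishes, so deformation theory is unobstructed: the stable locus $\calM_{\sigma}[v]^{st}$ is automatically smooth of dimension $v^{2}+2$ and carries a holomorphic symplectic $2$-form built from the Yoneda product and Serre duality on $\Ext^{1}(E,E)$.

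Next, I would establish the base case of $\sigma$ in the large-volume limit of the geometric region. For $\alpha\gg 0$ and fixed primitive $v$, a direct comparison of slope functions shows that $\sigma_{\alpha,\beta}$-(semi)stable objects, up to shift, are exactly the Gieseker (semi)stable sheaves on $S$ with respect to the twisted polarization. Mukai's classical theorem, together with Yoshioka's existence and irreducibility results, then provides the corresponding Gieseker moduli space $\calM_{\textnormal{gies}}[v]$ as a smooth projective irreducible holomorphic symplectic variety deformation equivalent to $S^{[v^{2}/2+1]}$, with non-empty stable locus whenever $v^{2}\geq -2$. This settles the theorem for $\sigma$ lying deep in the large-volume chamber.

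To pass to arbitrary generic $\sigma\in\Stab^{+}(S)$, I would connect $\sigma$ to a large-volume stability condition by a path in $\Stab^{+}(S)$ and analyze how the moduli vary via the wall-and-chamber decomposition (Theorem \ref{ThmWallAndChamberDecompStabGen}). Within each chamber the moduli functor is constant. To cross a single wall, the central tool is the Positivity Lemma: a generic $\sigma$ produces a natural class $\ell_{\sigma}\in\NS(\calM_{\sigma}[v])$ by pairing $Z_{\sigma}$ with a (quasi-)universal family, and one checks that $\ell_{\sigma}$ is ample, yielding projectivity of $\calM_{\sigma}[v]$. Since $v$ is primitive and $\sigma$ is generic, semistability coincides with stability, so $\calM_{\sigma}[v]=\calM_{\sigma}[v]^{st}$, and the smoothness, dimension formula and symplectic structure established in the first paragraph transfer directly; irreducibility is preserved because the birational modification induced by crossing a wall is an isomorphism in codimension one between varieties of the same dimension.

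The principal obstacle will be this wall-crossing step: controlling the Jordan--H\"older filtrations of strictly semistable objects on a wall and showing that the induced correspondence between $\calM_{\sigma^{+}}[v]$ and $\calM_{\sigma^{-}}[v]$ preserves projectivity, smoothness and non-emptiness of the stable locus. This forces a lattice-theoretic analysis of the possible walls (indexed by rank-$2$ sublattices of $v^{\perp}\cap H^{*}(S,\matZ)$) and a case-by-case discussion distinguishing \emph{fake}, \emph{flopping}, \emph{divisorial} and \emph{totally semistable} walls, verifying in each case that whenever $v^{2}\geq -2$ a stable object survives on both sides of the wall and that the induced map on moduli fits one of these geometric types.
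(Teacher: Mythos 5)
First, a remark on the comparison itself: the paper does not prove this statement --- it is quoted verbatim from \cite{BMMMPwallcrossing}, Theorem 2.15 --- so your proposal can only be measured against the proof in the cited source (which is also what the surrounding text of the paper implicitly relies on). Your first two paragraphs are correct and match the standard ingredients: Serre duality plus simplicity gives unobstructedness, the dimension formula $\ext^1(E,E)=v^2+2$ and the symplectic form on the stable locus, and Bridgeland's large-volume identification (Theorem \ref{ThmGiesekerChamberModuliStabCond}) together with the Mukai--Huybrechts--O'Grady--Yoshioka results settles the Gieseker chamber. The use of the Positivity Lemma for projectivity at an arbitrary generic $\sigma$ is also the argument actually used by Bayer--Macr\`i.

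The genuine gap is in your propagation step, in two ways. First, it is circular: the wall classification you plan to invoke (fake, flopping, divisorial, totally semistable walls, i.e.\ \cite{BMMMPwallcrossing}, Theorem 5.7) is developed in that paper \emph{assuming} Theorem 2.15 as foundational input --- non-emptiness, projectivity and the hyperk\"ahler structure of $\calM_{\sigma_\pm}[v]$ on both sides of every wall are used throughout their analysis --- so it cannot be used to prove Theorem \ref{ThmModuliStableObjectIsHKvaranddim}. Second, the transfer of non-emptiness and irreducibility across a wall is exactly the hard content and does not follow from your stated mechanism: on a totally semistable wall \emph{every} $\sigma_+$-stable object becomes strictly semistable, so no stable object ``survives'' by openness of stability; and even granting a birational map $\calM_{\sigma_+}[v]\dashrightarrow\calM_{\sigma_-}[v]$, irreducibility of $\calM_{\sigma_-}[v]$ does not follow from its being birational to an irreducible variety, since a priori it could have further components (each automatically smooth of dimension $v^2+2$) invisible to that map. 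The cited proof avoids wall-by-wall induction entirely: non-emptiness, irreducibility and the deformation type are obtained by applying a Fourier--Mukai autoequivalence (together with the $\widetilde{\GL}_2(\matR)$-action) carrying the given generic $\sigma$ into the Gieseker chamber of a Fourier--Mukai partner of $S$ (possibly twisted by a Brauer class), which reduces the whole statement at once to Yoshioka's theorem; this is the same mechanism the present paper uses in the special case of Proposition \ref{propallstabaregeometric}. That Fourier--Mukai reduction is the missing key idea, and without it your induction cannot be completed non-circularly.
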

			
			Finally, we will use the existence of a Gieseker chamber, as explained below.
			
			\begin{theorem}[\cite{BridgelandStabCondK3}, \S 14.2]\label{ThmGiesekerChamberModuliStabCond}
				Let $v=(v_0,v_1,v_2)$ be a primitive class with $v_0>0$. Then there exists $\alpha_0>0$ such that for any $\alpha\geq \alpha_0$ and all $\beta < \frac{Hv_1}{H^2v_0}$, an object $F\in\D^b(S)$ of class $v(F)=v$ is $\sigma_{\alpha,\beta}$-stable if and only if it is the shift of a Gieseker-stable sheaf. In particular, the moduli space $\calM_{\sigma_{\alpha,\beta}}[v]$ is isomorphic to the moduli space $\calM_S[v]$ of Gieseker-stable sheaves on $S$ of class $v$.
			\end{theorem}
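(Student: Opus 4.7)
\medskip

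\noindent\textbf{Proposal.} The plan is to prove the statement as a large-volume limit: show that as $\alpha\to+\infty$ with $\beta$ fixed strictly below $\mu(v)=Hv_1/(H^2v_0)$, the $\sigma_{\alpha,\beta}$-stability order on objects of class $v$ converges to the Gieseker order on sheaves of class $v$. The proof naturally splits into (i) matching hearts, (ii) a boundedness step, and (iii) a slope comparison in the large-$\alpha$ limit, together with the converse.

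\medskip

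\noindent\emph{Matching hearts.} First I would note that if $F$ is $\mu$-semistable of class $v$ and $\beta<\mu(v)$, then every Harder--Narasimhan factor of $F$ has $\mu_\beta>0$, so $F\in T^\beta\subset\Coh^\beta(S)$. In particular any Gieseker-stable sheaf of class $v$ lies in $\Coh^\beta(S)$ without a shift. Conversely, if $F\in\D^b(S)$ has $v(F)=v$ with $v_0>0$ and $F[k]\in\Coh^\beta(S)$, then $\Im Z_{\alpha,\beta}(F[k])=(-1)^k\alpha H^2 v_0(\mu(v)-\beta)>0$ forces $k=0$, so $F$ must be an honest complex in $\Coh^\beta(S)$. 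A standard cohomology argument then shows: given any short exact sequence $0\to A\to F\to B\to 0$ in $\Coh^\beta(S)$ with $F$ a sheaf, $A$ is a genuine subsheaf of $F$ and $\calH^{-1}(B)$ is both in $T^\beta$ (as a subsheaf of $A$) and $F^\beta$, hence zero; so we obtain a short exact sequence of sheaves.

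\medskip

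\noindent\emph{Slope asymptotics and boundedness.} An explicit computation gives
\begin{align*}
\Im Z_{\alpha,\beta}(w) &= \alpha H^2 w_0(\mu(w)-\beta),\\
-\Re Z_{\alpha,\beta}(w) &= w_2 - \beta H w_1 - \tfrac{H^2}{2}(\alpha^2-\beta^2)w_0,
\end{align*}
so to leading order in $\alpha$ the slope $\mu_Z$ ranks classes by their $\mu_\beta$-slope, and, on a level set of $\mu_\beta$, by the standard Gieseker quantity $(\ch_2+v_0)/v_0$ (i.e.\ the reduced Hilbert polynomial after the degree-one term is fixed). The core step is then a boundedness lemma: the set of numerical Mukai vectors $v_A$ realized by subsheaves $A\hookrightarrow F$ with $\mu_\beta(A)\geq\mu_\beta(F)$ and $F$ ranging over a bounded family is finite. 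I would prove this from Grothendieck's lemma on bounded subsheaves together with the Bogomolov inequality $\Delta\geq 0$ applied to the quotient $F/A$ in order to bound $\ch_2(A)$ from below.

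\medskip

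\noindent\emph{Matching of (semi)stability.} Now given $F$ Gieseker-stable of class $v$ and a would-be destabilizer $0\to A\to F\to B\to 0$ in $\Coh^\beta(S)$, I reduce to a sequence of sheaves as above. Either $\mu_\beta(A)>\mu_\beta(F)$, in which case $A$ already $\mu$-destabilizes $F$ — impossible; or $\mu_\beta(A)=\mu_\beta(F)$, in which case the residual comparison $\mu_Z(A)\lessgtr\mu_Z(F)$ reduces, for $\alpha$ sufficiently large, to comparing the reduced Mukai vectors, and is controlled by Gieseker stability of $F$. By the boundedness step only finitely many numerical classes of $A$ can occur, so a uniform $\alpha_0$ handling all of them can be chosen. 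Conversely, if $F\in\Coh^\beta(S)$ is $\sigma_{\alpha,\beta}$-stable with $v(F)=v$, the same heart analysis forces $\calH^{-1}(F)=0$ (else $\calH^{-1}(F)[1]\hookrightarrow F$ would have $\mu_Z=+\infty$) and the torsion part of $\calH^0(F)$ to vanish; then any Gieseker-destabilizing subsheaf would $\sigma_{\alpha,\beta}$-destabilize $F$ for $\alpha\gg 0$ by the same asymptotic, so $F$ is Gieseker-stable.

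\medskip

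\noindent\emph{From bijection to isomorphism of moduli.} Finally, since the same condition cuts out the same open substack of stable objects with class $v$ in the stack of complexes, and the Gieseker moduli space is constructed as a good moduli space of that same open substack (via GIT on a Quot scheme), the two coarse moduli spaces $\calM_{\sigma_{\alpha,\beta}}[v]$ and $\calM_S[v]$ are canonically isomorphic. The main obstacle I expect is the boundedness step: one must rule out that subsheaves with fixed $\mu_\beta$ but arbitrarily small $\ch_2$ could arise, and this is where the Bogomolov inequality together with Grothendieck's boundedness lemma do the essential work.
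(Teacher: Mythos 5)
The paper itself offers no proof of this statement: it is imported directly from Bridgeland's work on stability conditions on K3 surfaces (section 14), so your proposal has to be measured against that argument, whose overall architecture (large-volume limit of the slope, plus a boundedness step for uniformity of $\alpha_0$) your plan correctly reproduces. The genuine problem is your ``matching hearts'' lemma, on which everything afterwards leans. You assert that every short exact sequence $0\to A\to F\to B\to 0$ in $\Coh^\beta(S)$ with $F$ a sheaf is a short exact sequence of sheaves, on the grounds that $\calH^{-1}(B)$ lies ``in $T^\beta$ (as a subsheaf of $A$)'' as well as in $F^\beta$, hence vanishes. But $T^\beta$ is closed under quotients and extensions, not under subobjects, so a subsheaf of $A\in T^\beta$ need not lie in $T^\beta$, and the conclusion $\calH^{-1}(B)=0$ is false in general. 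The paper's own wall computation supplies counterexamples: the sequences (\ref{EqnDestabExSeqCircularWall}) and (\ref{EqnDestabExSeqCircularWall2}) of Proposition \ref{PropCircularWallDescriptionLeftSide} are short exact sequences in $\Coh^\beta(S)$ whose middle term $F_S$ is a Gieseker-stable sheaf of class $v=(2,1,3)$, whose quotients $\calI_x^\vee[1]$ and $\calU^\vee_S[1]$ are not sheaves, and whose subobjects $\calU_S^\vee$ and $(\phi\phi^!F)_S$ have ranks $3$ and $5$, so are not even subsheaves of the rank-two sheaf $F_S$.

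This is not a cosmetic slip, because the subobjects your lemma discards are precisely the ones generating the wall $W_l$ that bounds the Gieseker chamber: they are the reason a threshold $\alpha_0>0$ is needed at all, and they determine its value. With your reduction, the forward direction only tests subsheaves of $F$ (which have rank at most $v_0$) against Gieseker stability, so it can never detect $W_l$ and would wrongly conclude stability below the actual wall; the converse direction and the boundedness step quantify over the same too-small class of subobjects, so they inherit the gap. A correct argument must handle monomorphisms $A\hookrightarrow F$ in the heart whose cokernel $B$ has $K:=\calH^{-1}(B)\neq 0$, i.e.\ sheaf maps $A\to F$ with nonzero torsion-free kernel $K\in F^\beta$, and show they never destabilize once $\alpha\gg 0$: at leading order in $\alpha$ the comparison of $\mu_Z$-slopes reduces to the comparison of $\mu$-slopes, and $\mu(A)$ is a weighted average of $\mu(A/K)\le\mu(F)$ (as $A/K$ is a subsheaf of the $\mu$-semistable sheaf $F$) and $\mu(K)\le\beta<\mu(F)$, hence is strictly smaller than $\mu(F)$ whenever $K\neq 0$; one then makes the threshold uniform via finiteness of the numerical classes of potential destabilizers --- subobjects and quotients alike, not only subsheaves. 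This uniform control of non-sheaf quotients is exactly the content of the argument in Bridgeland's section 14 that the paper cites (it is also visible in the nested-semicircle description of the walls used in section \ref{SectionComputationsWalls}); without it, your proposal establishes neither implication of the theorem.
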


			\subsection{From stability conditions to $\NS(\calM_S)$}
			
			Let us quickly recall some facts on hyperk\"ahler manifolds. We refer to Debarre's survey \cite{DebarreHKManifolds} for the statements.
			
			Let $M$ be a projective hyperk\"ahler manifold. The cohomology space $H^2(M,\matZ)$ can be equipped with a natural quadratic form $q\colon H^2(M,\matZ)\to\matZ$, called \textit{Beauville-Bogomolov form}, and $q$ induces a bilinear form on $\NS(M)_\matR$.
			
			Let $\Pos(M)$ be the \textit{strictly positive cone} of $M$, i.e. the connected component of $\{ D\in\NS(M)_\matR \ | \ q(D)>0 \}$ containing the ample classes. The \textit{movable cone} $\Mov(M)\subset \NS(M)_\matR$ of $M$ is define as the cone generated by classes of divisors $D$ such that the base locus of $|D|$ has codimension at least $2$.  %Its closure is called the \textit{closed movable cone}. 
			Denoting $\Nef(M)$ the cone generated by Nef divisors on $M$ (it identifies with the closure $\overline{\Amp}(M)$ of the ample cone), we have the inclusions
			$$\Nef(M)\subset \overline{\Mov}(M) \subset \overline{\Pos}(M).$$
			From \cite{HassetTschinkelMovingAmpleConesHKFourfolds}, $\overline{\Mov}(M)$ admits a wall-and-chamber decompositions, each chamber corresponding the the image of $\Nef(M')$ for $M'$ a birational model of $M$, and moreover the nef cone of each birational model of $M$ appears as a chamber. This wall and decomposition can actually be seen in $\Stab^+(S)$ thanks to the work of Bayer and Macr\`i.
			
			\begin{theorem}[\cite{BMMMPwallcrossing}, Thm. $1.1$ and $1.2$]\label{ThmBMChambStabisChambMov}
				For $\sigma, \tau\in\Stab^+(S)$ generic, we have $\calM_\sigma[v]\simeq \calM_\tau[v]$.
				Fix a base point $\sigma\in\Stab^+(S)$. There is a map
				\begin{eqnarray}
					l\colon \Stab^+(\calM_\sigma[v]) \to \Mov(\calM_\sigma[v])
				\end{eqnarray}
				such that the following holds.
				
				\begin{enumerate}
					
					\item The image of $l$ is the cone of big movable divisor $\Mov(\calM_\sigma[v])\cap \Pos(\calM_\sigma[v])$.
					
					\item For any generic stability condition $\tau\in\Stab^+(S)$, the image $l(\tau)$ lies in the chamber of $\Mov(\calM_\sigma[v])$ which correspond to the birational model $\calM_\tau[v]$ of $\calM_\sigma[v]$. In particular, all smooth $K$-trivial birational model of $\calM_\sigma[v]$ appears as $\calM_\calC[v]$ for some chamber $\calC\subset \Stab^+(S)$.
					
					\item For any chamber $\calC\subset \Stab^+(S)$, we have $l(\calC)=\Amp(\calM_\calC[v])$.
					
				\end{enumerate}
			\end{theorem}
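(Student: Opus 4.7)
The plan is to construct the map $l$ explicitly via a Donaldson--Mukai slant product, and then verify its three properties by combining a positivity lemma with a wall-crossing analysis. For a generic $\sigma=(Z,\calP)\in\Stab^+(S)$, the moduli space $\calM_\sigma:=\calM_\sigma[v]$ is a smooth projective hyperk\"ahler manifold by Theorem \ref{ThmModuliStableObjectIsHKvaranddim}, and admits a quasi-universal family $\calE$ on $\calM_\sigma\times S$. I would define $l(\sigma)\in H^2(\calM_\sigma,\matR)$ by specifying its intersection with each curve class $C\subset\calM_\sigma$ via
\[
l(\sigma)\cdot C \;=\; \Im\!\left(-\,\overline{Z(v)}\cdot Z\bigl(\Phi_\calE(\calO_C)\bigr)\right),
\]
where $\Phi_\calE:\D^b(\calM_\sigma)\to\D^b(S)$ is the Fourier--Mukai transform with kernel $\calE$. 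Independence of the choice of $\calE$ (up to scaling) and functoriality in $\sigma$ are formal consequences of the Mukai-pairing formalism.

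The first and deepest step, which directly yields property (3) of the theorem and underpins the rest, is the \emph{positivity lemma}: for every proper scheme $T$ and every $T$-flat family $\calF$ of $\sigma$-semistable objects of class $v$, the resulting class $l_{\calF,\sigma}\in H^2(T,\matR)$ is nef, with strict positivity on a curve $C\subset T$ whenever $\calF|_{C\times S}$ is not $S$-equivalent to a constant family. Applied to the universal family on $\calM_\sigma$, this forces $l(\sigma)\in\Amp(\calM_\sigma)$ for $\sigma$ in a chamber, since two generically $S$-equivalent stable objects must be isomorphic and therefore represent the same point of $\calM_\sigma$. Proving the positivity lemma requires reinterpreting Bridgeland stability via a moment-map picture on the derived category and combining it with a Langton-type properness result for limits of families, using the representability of the stack of complexes (Inaba, Lieblich). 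This is where essentially all new technical input enters; the remainder of the argument will be essentially formal by comparison.

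To establish property (2), I would analyse how $\calM_\sigma$ varies across a wall $W\subset\Stab^+(S)$ separating adjacent chambers $\calC_\pm$. Objects remaining stable near $W$ on both sides form a common open subscheme $U\subset\calM_{\calC_+}\cap\calM_{\calC_-}$. Classifying walls via the rank-two hyperbolic sublattice $\calH\subset\Lambda$ generated by $v$ and a potential destabilising class allows one to show that walls strictly inside $\Stab^+(S)$ can only be flopping or fake, never divisorial contractions (divisorial walls bound the distinguished component). Hence $\calM_{\calC_+}\dashrightarrow\calM_{\calC_-}$ is an isomorphism in codimension one, so both are $K$-trivial birational models of $\calM_S$. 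Applying the positivity lemma to the universal family on $W$ itself shows that $l$ is continuous and that $l(\sigma)$ as $\sigma\to W$ lies in a common face of $\Amp(\calM_{\calC_\pm})$, yielding $l(\calC_\pm)=\Amp(\calM_{\calC_\pm})$, glued along walls in $\Mov(\calM_S)$.

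Property (1) then follows by iteration: every chamber of $\Stab^+(S)$ maps under $l$ to the ample cone of some birational model of $\calM_S$, and these nef cones together tile the big movable cone $\Mov(\calM_S)\cap\Pos(\calM_S)$ by the Hassett--Tschinkel decomposition cited earlier. Conversely, any smooth $K$-trivial birational model of $\calM_S$ is reached from the Gieseker chamber (Theorem \ref{ThmGiesekerChamberModuliStabCond}) by finitely many wall-crossings in $\Stab^+(S)$, so appears as $\calM_\calC[v]$ for some chamber $\calC$. The main obstacle throughout is the positivity lemma of the second paragraph, with a secondary difficulty in the lattice-theoretic classification of wall types needed to rule out divisorial contractions in the interior of $\Stab^+(S)$.
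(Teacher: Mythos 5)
Note first that the paper itself does not prove this statement: it is imported verbatim from \cite{BMMMPwallcrossing} (Theorem 1.2 there), so there is no internal proof to compare against, and your sketch must be measured against the actual Bayer--Macr\`i argument. Your overall architecture does follow theirs: the divisor class defined by $l(\sigma)\cdot C = \Im\bigl(-\overline{Z(v)}\cdot Z(\Phi_\calE(\calO_C))\bigr)$ is exactly their construction, and the Positivity Lemma (nefness of $l_{\calF,\sigma}$, strict positivity on curves along which the $S$-equivalence class is non-constant) is indeed the technical heart that yields property (3).

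However, your wall-crossing analysis contains a genuine error that undermines the way you derive properties (1) and (2). You claim that walls in the interior of $\Stab^+(S)$ ``can only be flopping or fake, never divisorial contractions.'' This is false, and it is contradicted by this very paper's computations: the vertical wall $W_v=\{\beta=1/2\}$ for $v=(2,1,3)$ consists of geometric stability conditions, is of Li--Gieseker--Uhlenbeck (i.e.\ divisorial) type because the isotropic class $(2,1,4)$ with $\langle (2,1,4),v\rangle=2$ lies in $\calH_{W_v}$, and it is a bouncing wall: both adjacent chambers are sent by $l$ to the \emph{same} chamber of $\Mov(\calM_S)$, while the wall itself maps to the boundary ray $\matR e_1$ of the movable cone (see the discussion of $W_v$ and Proposition \ref{PropPosMovNefMS}). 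Divisorial walls --- Brill--Noether, Hilbert--Chow, Li--Gieseker--Uhlenbeck, detected lattice-theoretically by a spherical class $s$ with $\langle s,v\rangle=0$ or an isotropic class $w$ with $\langle w,v\rangle\in\{1,2\}$ --- are precisely the mechanism by which $l$ reaches $\partial\Mov(\calM_S)$ and bounces back inside rather than exiting; without them, your tiling argument would just as well ``prove'' that the image of $l$ is the whole positive cone, which is wrong. A second, related gap: your assertion that the objects stable on both sides of a wall always form a common open subset with complement of codimension $\geq 2$ silently excludes \emph{totally semistable} walls, where no object of class $v$ remains stable on the wall (the portion of $W_l$ with $\beta>1/3$ in this paper is an example); there the identification of the two adjacent models requires composing with spherical twists, and handling this case is a substantial part of the Bayer--Macr\`i proof that your argument as stated cannot reach.
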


			Crossing a wall in $\Stab^+(S)$ produces a birational transformation between the moduli spaces of the adjacent chambers. The type of birational transformation is described in \cite[Thm. 5.7]{BMMMPwallcrossing}, and can be studied in a lattice-theoretic way.

			\subsection{Computing the walls}\label{SectionComputationsWalls}

			Consider the K3 surface we studied \S \ref{sectionLagrangianFibration}. Recall that $S$ is a K3 surface of genus $9$, with $\Pic(S)=\matZ \langle H \rangle$ where $H$ is an ample divisor of square $H^2=16$.  We fix $v=(2,1,3)$ (which correspond to sheaves with $c_2=7$, see \S \ref{SectionCohomModuliSheaves}). Our goal is to study 
			$$\calM_S\coloneqq \calM_S(2,1,7)=\calM_S[2,1,3] = \calM_S[v].$$

			First, we show that it is sufficient to consider stability conditions of the form $\sigma_{\alpha,\beta}$ as constructed \S \ref{sectionGeneralityStabcond}.

			\begin{proposition}\label{propallstabaregeometric}
				Let $\sigma\in\Stab^+(S)$ be a generic stability condition. Then there is an autoequivalence $\phi\in\Aut(\D^b(S))$ with $\phi^H(v)=v$ such that $\phi(\sigma)$ lies in $U(S)$. Moreover, the moduli space $\calM_\sigma[v]$ is isomorphic to $\calM_{\sigma_{\alpha,\beta}}[v]$ for some $\alpha,\beta$.
			\end{proposition}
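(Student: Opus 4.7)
The plan is to combine Bridgeland's structure theorem for $\Stab^+(S)$ from \cite{BridgelandStabCondK3} with the wall-and-chamber description of $\Mov(\calM_S[v])$ via stability conditions (Theorem \ref{ThmBMChambStabisChambMov}), in order to exhibit an autoequivalence that both preserves $v$ in cohomology and moves $\sigma$ into the geometric locus $U(S)$.

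First, I would record the relevant features of $U(S)$ following \cite{BridgelandStabCondK3}. The subset $U(S)\subset\Stab^+(S)$ is open, and its walls are described by spherical objects: crossing a wall of $\overline{U(S)}$ is realised by the spherical twist $T_E$ along some spherical $E\in\D^b(S)$. Its cohomological action is the Picard--Lefschetz reflection $w\mapsto w+\langle w,v(E)\rangle v(E)$, which fixes $v$ if and only if $\langle v,v(E)\rangle=0$. By Bridgeland's covering result, the orbit of $\overline{U(S)}$ under $\Aut(\D^b(S))$ covers all of $\Stab^+(S)$, so there always exists some $\phi_0\in\Aut(\D^b(S))$ with $\phi_0(\sigma)\in U(S)$; but a priori $\phi_0^H$ need not fix $v$.

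Second, I would refine $\phi_0$ to an autoequivalence $\phi$ with $\phi^H(v)=v$ by using the translation into $\Mov(\calM_S[v])$. By Theorem \ref{ThmWallAndChamberDecompStabGen} the stability condition $\sigma$ lies in a chamber $\calC$ of $\Stab^+(S)$, and by Theorem \ref{ThmBMChambStabisChambMov} its image $l(\calC)$ is a chamber of $\Mov(\calM_S[v])$ corresponding to some smooth $K$-trivial birational model $\calM'$ of $\calM_S[v]$. By part (2) of the same theorem, every chamber of $\Mov(\calM_S[v])$ arises as $l(\calC')$ for some chamber $\calC'$ of $\Stab^+(S)$, and by part (3) one may take $\calC'$ to contain a generic geometric $\tau\in U(S)$. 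The autoequivalence $\phi$ is then constructed as a composition of spherical twists $T_E$ along objects $E$ with $\langle v,v(E)\rangle=0$, which correspond precisely to the wall-crossings inside $\Mov(\calM_S[v])$ connecting $l(\calC)$ to $l(\calC')$; by construction $\phi^H(v)=v$ and $\phi(\sigma)\in\calC'\subset U(S)$.

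Third, the isomorphism of moduli spaces will be immediate once $\phi$ is found. Indeed $\phi$ induces an isomorphism $\calM_\sigma[v]\simeq \calM_{\phi(\sigma)}[\phi^H(v)]=\calM_{\phi(\sigma)}[v]$, and since $\phi(\sigma)\in U(S)$ one may write $\phi(\sigma)=\sigma_{\alpha,\beta}\cdot g$ for some $g\in\widetilde{\GL}_2^+(\matR)$; the $\widetilde{\GL}_2^+(\matR)$-action does not alter the class of (semi)stable objects, so $\calM_{\phi(\sigma)}[v]\simeq \calM_{\sigma_{\alpha,\beta}}[v]$.

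The main obstacle is the second step: showing that the subgroup of $\Aut(\D^b(S))$ acting trivially on $v$ already acts transitively enough on the chambers of $\Stab^+(S)$ to carry $\calC$ into $U(S)$. This is a lattice-theoretic point and requires one to check that the $v$-preserving spherical twists generate a group whose orbit on $\Stab^+(S)$ covers the fibre of $l$ over any fixed chamber of $\Mov(\calM_S[v])$; essentially this is the content of the translation between chambers in $\Stab^+(S)$ and chambers in $\Mov(\calM_S[v])$ established in \cite{BMMMPwallcrossing}.
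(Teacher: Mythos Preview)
Your second step contains a genuine gap, and the paper's argument is both simpler and essentially different.

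The paper does not try to refine an arbitrary $\phi_0$ using the map $l$ to $\Mov(\calM_S[v])$. Instead it invokes directly the explicit form of the autoequivalence produced in the proof of \cite{BridgelandStabCondK3}, Proposition~13.2: any $\sigma\in\Stab^+(S)$ can be brought into $\overline{U(S)}$ by a composite of autoequivalences each of which is either a \emph{square} $T_A^2$ of a spherical twist along a spherical vector bundle $A$, or a spherical twist $T_{\calO_C(k)}$ along a smooth rational curve $C\subset S$. Since $T_A$ acts on cohomology as a reflection, $T_A^2$ acts \emph{trivially} on all of $\widetilde H(S,\matZ)$, so in particular fixes $v$. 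And the second type simply does not occur here: $\Pic(S)=\matZ H$ with $H^2=16$, so $S$ contains no $(-2)$-curve. Hence the resulting $\phi$ automatically satisfies $\phi^H=\Id$, and a fortiori $\phi^H(v)=v$. Genericity of $\sigma$ then rules out $\phi(\sigma)\in\partial U(S)$, and the $\widetilde{\GL}_2^+(\matR)$-action handles the last sentence as in your third step.

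By contrast, your proposed construction of $\phi$ as a product of spherical twists $T_E$ with $\langle v,v(E)\rangle=0$ ``corresponding to wall-crossings inside $\Mov(\calM_S[v])$'' does not make sense as stated. Crossing a wall between two chambers of $\Mov(\calM_S[v])$ corresponds to a birational map between distinct models, not to an autoequivalence of $\D^b(S)$; autoequivalences identify chambers of $\Stab^+(S)$ lying over the \emph{same} chamber of $\Mov$. So if $l(\calC)=l(\calC')$ there are no wall-crossings in $\Mov$ to compose, and you have not explained why the fibre of $l$ over a fixed chamber is transitive under $v$-preserving autoequivalences. You flag this yourself in the last paragraph, but the issue is not merely a lattice-theoretic check: it is the whole content of the statement, and the route you sketch does not lead there. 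The paper's observation that one can arrange $\phi^H=\Id$ on the nose (via $T_A^2$'s and the absence of rational curves) is the missing idea.
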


			%================================================= OLD PROOF =================================================
			\begin{proof}
				From the proof of \cite[Prop. 13.2]{BridgelandStabCondK3}  there exists an autoequivalence $\phi\in\Aut(\D^b(S))$ such that $\phi(\sigma)\in\overline{U(S)}$, which is the composite of autoequivalences either of the form $T_A^2$, the square of a spherical twist along a spherical vector bundle $A$ on $S$, or of the form $T_{\calO_C(k)}$, the spherical twist along the structure sheaf of a nonsingular rational curve $C\subset S$. But the latter case cannot occur as $\Pic(S)=\matZ H$, $H^2=16$, and any smooth rational curve satisfies $C^2=-2$. Now the fact $\phi(v)=v$ follows from the remark that $T_A^2$ acts trivially in cohomology ($T_A$ acts by reflection in the hyperplane orthogonal to $v(A)$).
				Moreover, as we assumed that $\sigma$ does not lie on a wall, $\phi(\sigma)$ cannot be on the boundary $\partial U(S)$ as the latter is covered by walls.
				Finally, note that the action of $\widetilde{GL}_2(\matR)$ does not affect the moduli space of stable objects as it only changes the phases. By definition of $U(S)$, some element of this group must send $\phi\sigma$ to a stability condition of the form $\sigma_{\alpha,\beta}$ as constructed above.
				
				To sum up, there is an autoequivalence $\phi\in\Aut(\D^b(S))$ acting trivially on cohomology and an element $m\in\widetilde{GL}_2(\matR)$ which give
				$$\calM_\sigma[v] \xrightarrow{\sim} \calM_{\phi \sigma}[v] \xrightarrow{\sim} \calM_{m\phi\sigma}[v] \simeq \calM_{\sigma_{\alpha,\beta}}[v].$$
			\end{proof}

			Denote $\matH=\{(\beta,\alpha)\in\matR^2 \ | \ \alpha>0\}$ the open upper halfplane in $\matR^2$. In view of Proposition \ref{propallstabaregeometric}, we must compute the walls lying in $\matH$. To do so, we will use the very useful computations made by Maciocia in \cite{MaciociaComputationsWalls}. 
			We write $Z_{\alpha,\beta}$ for the central charge of a stability condition of the form $\sigma_{\alpha,\beta}$. We drop $(\alpha,\beta)$ and simply write $Z\coloneqq Z_{\alpha,\beta}$ when the context is clear.

			\begin{definition}
				Given a class $0\neq w\in\Lambda$, we define the \textit{numerical wall generated by} $w$ as the nonempty subset of $\Stab(S)$ given by
				\begin{eqnarray*}
					W(w) = \{ \sigma=(\calP,Z) \in \Stab(S) | \ \Re Z(v)\cdot\Im Z(w) = \Re Z(w)\cdot\Im Z(v)\}
				\end{eqnarray*}
			\end{definition}
			
			In particular, any \textit{actual} wall of Theorem \ref{ThmWallAndChamberDecompStabGen} for which there exists an inclusion $E_w \hookrightarrow F$  with $v(E_w)=w$ lies in the numerical wall $W(w)$. We say that a point $\sigma\in W(w)$ is an \textit{actual point} if it lies in an actual wall.
			
			Let us focus on the case of stability condition of the form $\sigma_{\alpha,\beta}$. If a numerical wall $W(w)\subset\matH$ is actual at a point, then it remains actual on the connected component of $W(w)$, in other words an actual wall is a subset of a numerical wall cut out by holes (correponding to the existence of spherical objects, as in Proposition \ref{PropZalphabetaIsStabCondition}) or by $\{\alpha=0 \}$, see \cite[\S 6.4]{MacriSchmidtLecturesBridgelandStability}.

			\begin{proposition}[\cite{MaciociaComputationsWalls}]\label{PropMaciociaComputations}
				Let $w=(w_0,w_1,w_2)\in\Lambda$, $w\neq0$, and consider the associated numerical wall $W(w)\subset \matH$. Assume $W(w)$ is not $0$-dimensional. Let $(\beta,\alpha)\in W(w)$.
				
				\begin{enumerate}%[wide, labelwidth=!, labelindent=\parindent]
					\item Either $\beta=\dfrac{v_1}{v_0}$, $\alpha >0$, or $\beta\neq\dfrac{v_1}{v_0}$ and $(\beta,\alpha)$ lies in a semicircle of center $(C,0)$ and radius $R$, where
					$$ C = \frac{v_0w_2-v_2w_0}{H^2(v_0w_1-v_1w_0)} \ \text{ and } \ R = \sqrt{\left(C-\frac{v_1}{v_0}\right)^2-Q},$$
					with $Q=\dfrac{\langle v,v \rangle}{H^2v_0^2}$.
					
					In the first case, we call $W_v(w)\coloneqq\{\beta = \frac{v_1}{v_0}\}$ the \textbf{vertical wall}, the semicircle in the second case is called a \textbf{semicircular wall}.
					
					\item Assume $Q\geq 0$. If $W(w)$ is a semicircular wall, then the center $C$ satisfies either
					\begin{eqnarray*}
						C < \frac{v_1}{v_0}-\sqrt{Q} \text{ \ or \ } \frac{v_1}{v_0}+\sqrt{Q}<C.
					\end{eqnarray*}
					
					Moreover, $W(w)$ must intersect either the ray $\{\beta = \dfrac{v_1}{v_0}-\sqrt{Q}\}$ or the ray $\{\beta = \dfrac{v_1}{v_0}+\sqrt{Q}\}$ depending on its position relative to the vertical wall.
				\end{enumerate}
			\end{proposition}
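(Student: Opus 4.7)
The plan is to reduce the statement to a direct algebraic computation with the explicit formula for $Z_{\alpha,\beta}$, viewing the wall equation as a quadratic relation in $(\alpha,\beta)$ and then reading off its geometry. First I would substitute the real and imaginary parts
\[
\Re Z(v) = -v_2 + \beta v_1 H^2 + \tfrac{H^2}{2}(\alpha^2-\beta^2) v_0, \qquad \Im Z(v) = \alpha H^2 (v_1 - \beta v_0),
\]
(and the corresponding expressions for $w$) into the defining equation $\Re Z(v)\Im Z(w) = \Re Z(w)\Im Z(v)$ of $W(w)$. After factoring out the overall $\alpha$, a direct expansion should collapse to
\[
C'_0 \;-\; \beta B' \;+\; \tfrac{H^2 A'}{2}\bigl(\alpha^2+\beta^2\bigr) \;=\; 0,
\]
where $A' := v_0 w_1 - v_1 w_0$, $B' := v_0 w_2 - v_2 w_0$, $C'_0 := v_1 w_2 - v_2 w_1$. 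The crucial point to verify here is that the \emph{a priori} cubic terms in $\alpha^2\beta$ and $\beta^3$ cancel and that the coefficients of $\alpha^2$ and $\beta^2$ agree, so the vanishing locus is genuinely a circle (possibly degenerating to a line) rather than a more general conic.

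Part (1) then splits into two cases. In the degenerate case $A' = 0$, the equation is linear in $\beta$; observing that $A' = 0$ forces $v_0 C'_0 = v_1 B'$, one immediately recovers the vertical ray $\beta = v_1/v_0$. In the generic case $A' \neq 0$, completing the square in $\beta$ yields a circle of center $C = B'/(H^2 A')$, which after rewriting is precisely the formula in the statement. The identification $R^2 = (C - v_1/v_0)^2 - Q$ then reduces to the algebraic identity $v_1 B' - v_2 A' = v_0 C'_0$, which I would verify by direct expansion of both sides.

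For part (2), assuming $Q \geq 0$ and that $W(w)$ is a nondegenerate semicircular wall, one has $R^2 > 0$, hence $(C - v_1/v_0)^2 > Q \geq 0$, which gives $|C - v_1/v_0| > \sqrt{Q}$ and the claimed dichotomy. For the intersection statement, if $C < v_1/v_0 - \sqrt{Q}$ I would set $d := v_1/v_0 - C > \sqrt{Q}$ and compute that the value of $\alpha^2$ on the semicircle at $\beta = v_1/v_0 - \sqrt{Q}$ equals $R^2 - (d - \sqrt{Q})^2 = 2\sqrt{Q}(d - \sqrt{Q}) > 0$, so the wall does cross this vertical ray strictly inside the open half-plane $\matH$; the opposite case is handled symmetrically.

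The main obstacle is essentially bookkeeping in the first expansion: a dozen or so terms must sort themselves out so that two separate coefficients (those of $\alpha^2$ and of $\beta^2$) coincidentally become equal, which is what guarantees the geometry is circular. Once this circle/line alternative is established, the remainder of the proposition follows by elementary algebra together with the mild analysis in part (2).
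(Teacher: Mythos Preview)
The paper does not give its own proof of this proposition; it simply records the statement and refers to Maciocia \cite{MaciociaComputationsWalls} for the computations. Your direct algebraic argument is exactly the kind of computation carried out there: expand the defining relation $\Re Z(v)\Im Z(w)=\Re Z(w)\Im Z(v)$, observe that after cancelling the common factor $\alpha H^2$ the cubic terms drop out and the coefficients of $\alpha^2$ and $\beta^2$ agree, leaving the circle/line equation
\[
C'_0 - \beta B' + \tfrac{H^2 A'}{2}(\alpha^2+\beta^2)=0,
\]
and then complete the square. Your verification of the key identity $v_1 B' - v_2 A' = v_0 C'_0$ is correct and is what ties the geometric radius to the formula $R^2=(C-v_1/v_0)^2-Q$.

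One small point: in your argument for the intersection claim in part~(2) you compute $\alpha^2=2\sqrt{Q}(d-\sqrt{Q})$ at $\beta=v_1/v_0-\sqrt{Q}$, and assert this is strictly positive. That is only strictly positive when $Q>0$; if $Q=0$ the semicircle meets the ray $\{\beta=v_1/v_0\}$ only at the boundary point $\alpha=0$, which lies outside the open half-plane $\matH$. This boundary case is harmless for the paper's application (there $v=(2,1,3)$ has $\langle v,v\rangle=4>0$, so $Q>0$), but you should either add the hypothesis $Q>0$ for the intersection statement or note explicitly what happens when $Q=0$.
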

			%\begin{proof}
			%In view of Corollary \ref{CoroStab+ToSigmaAlphaBetaAndModuli}, the only thing to prove is that the autoequivalence involved satisfies $\phi^H(v)=v$. The equivalence $\phi$ is described in the proof of Proposition \ref{PropStab+canbereducedtoSigmaAlphaBeta}: it is a composition of of autoequivalences either of the form $T_A^2$, the square of a spherical twist along a spherical vector bundle $A$ on $S$, or of the form $T_{\calO_C(k)}$, the spherical twist along the structure sheaf of a nonsingular rational curve $C\subset S$. But the latter case cannot occur as $\Pic(S)=\matZ H$, $H^2=16$, and any smooth rational curve satisfies $C^2=-2$ by adjunction formula. Now the fact $\phi(v)=v$ follows from the remark that $T_A^2$ acts trivially in cohomology (see section \ref{SectionActionCohomology}).
			%\end{proof}

			\begin{remark}\label{RmkBrokenWallsDoNotMatter}
				Note that the only walls we are interested in are actual walls $W$ remaining actual along the whole numerical wall $W_{num}$, except on the holes in $W_{num}$ arising from spherical classes (see Proposition \ref{PropZalphabetaIsStabCondition}). Indeed, assume $W_{num}=W(w)$ for some class $w\in \widetilde{H}(S,\matZ)$, and let $\sigma\coloneqq\sigma_{\alpha,\beta}\in W(w)\mysetminus W$ be a stability condition (hence not on a hole). Then $\sigma$ lies in some chamber $\calC\subset \Stab^+(S)$, and its image $l(\sigma)$ lies in the open ample cone $\Amp(\calM_\calC[v])$. Since $\Amp(\calM_\calC[v])$ is a cone, the whole ray $\matR_{>0}\cdot  l(\sigma)$ lies in $\Amp(\calM_\calC[v])$. But this halfline contains $W(w)$ (this is a consequence of the definition of $l$, see \cite{BMMMPwallcrossing}, Theorem 10.2), in particular for any point $\sigma_0\in W$, given two stability condition $\sigma_\pm$ near $\sigma_0$ in each adjacent chamber, the corresponding image $l(\sigma_\pm)$ both lie in $\Amp(\calM_\calC[v])$, and hence $\calM_{\sigma_+}[v] = \calM_{\sigma_-}[v]=\calM_\calC[v]$.
			\end{remark}

			Therefore, we can compute the wall and chamber decomposition with respect to $v=(2,1,3)$ thanks to the description of Proposition \ref{PropMaciociaComputations}. Let $\alpha,\beta\in\matH$. Assume $\widetilde{F}\in\calM_{\sigma_{\alpha,\beta}}[2,1,3]$. By definition, there is an integer $k\in\matZ$ with $\widetilde{F}[k]\in\Coh^\beta(S)$. If $k$ is even, we have $v(\widetilde{F}[k])=(2,1,3)$, and if $k$ is odd we have $v(\widetilde{F}[k])=(-2,-1,-3)$.

			\subsubsection{The vertical wall}

			By Proposition~\ref{PropMaciociaComputations}, there is (at least numerically) a vertical wall 
			$$W_v=\{\beta=1/2\}$$
			in $\matH$, given by the class $(2,1,4)\in\Lambda$. In fact, this wall is an actual wall. Indeed, following Proposition \ref{PropDescriptionSheafMx217},  pick $F\in\calM_S[2,1,3]$ not locally free, the sheaf $E\coloneqq F^{**}$ is a stable vector bundle such that $E\in\calM_S[2,1,4]$. We have the exact sequence
			$$0 \to F \to E \to \calO_x\to 0$$
			for some point $x\in S$. This induces the exact triangle
			\begin{eqnarray}\label{ExactTriangleForVerticalWall}
				\calO_x \to F[1] \to E[1].
			\end{eqnarray}
			We claim that (\ref{ExactTriangleForVerticalWall}) induces an exact sequence in $\Coh^{1/2}(S)$. Indeed $E[1],F[1]\in \mathbf{F}^{1/2}[1]$ and $\calO_x\in \mathbf{T}^{1/2}$.%so that (\ref{ExactTriangleForVerticalWall}) lies in $\Coh^{1/2}(S)$.

			\subsubsection{Left side of the halfplane}\label{subsubsectionleftside}

			Consider a wall left to the vertical wall $W_v$, given by an exact sequence
			\begin{eqnarray}\label{EqnExSeqEFTforwalls}
				E \hookrightarrow F \rightarrow T
			\end{eqnarray}
			in $\Coh^\beta(S)$, with $F\coloneqq\widetilde{F}[k]\in\Coh^\beta(S)$, $v(\widetilde{F})=(2,1,3)$. From $\beta<1/2$ we must have $k$ even, that is $v(F)=(2,1,3)$. Denote $v(E)=w=(w_0,w_1,w_2)$ and $v(T)=t=(t_0,t_1,t_2)$. Note that $E$ and $T$ will play a symmetric role in the following, hence we can assume that $w_0>0$ for $2=v_0=w_0+t_0$.
			
			Using Proposition \ref{PropMaciociaComputations} in this case, the wall is a semicircular wall with center $C$ and radius $R$. We get
			\begin{eqnarray}
				C < \frac{1}{4} \label{twoboundsC}
			\end{eqnarray}
			Moreover, any semicircular wall must intersect the ray $\{\beta=\frac{1}{4}\}$. Note that for $\beta=\frac{1}{4}$, there is no class $\delta=(\delta_0,\delta_1,\delta_2)\in\Lambda$ with $\delta_0>0$, $\delta^2=-2$ and $\mu_{1/4}(\delta)=0$. Indeed, these conditions on $\delta$ give
			\begin{eqnarray*}
				8\delta_1^2 &=& \delta_0\delta_2-1 \\
				\delta_0 &=& 4\delta_1,
			\end{eqnarray*}
			which is impossible because $\delta_0,\delta_1,\delta_2\in\matZ$. From Proposition \ref{PropZalphabetaIsStabCondition}, $Z\coloneqq Z_{\alpha,\frac{1}{4}}$  defines a stability condition. 
			
			In view of Remark \ref{RmkBrokenWallsDoNotMatter}, we can assume that the exact sequence (\ref{EqnExSeqEFTforwalls}) hold at $\beta=\frac{1}{4}$. Recall that the imaginary and real part of $Z(-)$ are additive on exact sequence in $\Coh^\beta(S)$, and since $\mu_Z(F)<\infty$ we get $0<\Im(Zw)<\Im(Zv)$ and  $0<\Im(Zt)<\Im(Zv)$. For $\beta=\frac{1}{4}$, it gives
			\begin{eqnarray*}
				& & 0 < w_1-\frac{w_0}{4} < \frac{1}{2} \\
				&\iff & \frac{w_0}{4} < w_1 < \frac{w_0}{4}+\frac{1}{2}
			\end{eqnarray*}
			Since $w_1$ and $w_0$ are integers, we obtain $w_0=4n+3$, $w_1=n+1$ for some $n\in\matZ$. Moreover we assumed $w_0>0$, so $n\geq0$ and in particular $2w_1-w_0=-2n-1<0$.
			The inequality $C<\frac{1}{4}$ gives
			\begin{eqnarray}
				& & \frac{2w_2-3w_0}{16(2w_1-w_0)} < \frac{1}{4} \nonumber \\
				& \iff & 2w_2-3w_0 > 4(2w_1-w_0) \nonumber \\
				& \iff & w_2 > 4w_1 - \frac{1}{2}w_0 = 2n+\frac{5}{2} \label{infboundw2}
			\end{eqnarray}
			
			To obtain more bounds, we need to study $E$ with more details.
			
			\begin{lemma}\label{lemmaEspherical}
				The object $E$ is a $\sigma_{\alpha,1/4}$-\textit{stable} object, in particular it satisfies
				\begin{eqnarray}\label{Eissphericalequality}
					v(E)^2=16w_1^2-2w_0w_2 \geq -2.
				\end{eqnarray}
			\end{lemma}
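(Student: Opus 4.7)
My plan is to prove $E$ is $\sigma_{\alpha,1/4}$-stable directly, and then deduce $v(E)^2 \geq -2$ from the standard K3 Serre duality argument. The key structural observation is that $F$ is a genuine sheaf: since $\widetilde{F} \in \calM_S[2,1,3]$ has slope $\mu(\widetilde{F}) = 1/2 > 1/4$, it lies in $\mathbf{T}^{1/4}$, so $F = \widetilde{F}[k]$ belongs to $\Coh^{1/4}(S)$ only for $k = 0$. Applying the long exact sequence of cohomology sheaves to $0 \to E \to F \to T \to 0$ and using $\mathcal{H}^{-1}(F) = 0$, one obtains $\mathcal{H}^{-1}(E) = 0$, so $E$ is itself a torsion-free subsheaf of $\widetilde{F}$.

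For stability, I argue by contradiction: suppose a proper subobject $0 \neq A \subsetneq E$ in $\Coh^{1/4}(S)$ destabilizes, so $\mu_Z(A) \geq \mu_Z(E)$, with $v(A) = (a_0, a_1, a_2)$. The same cohomology sequence applied to $0 \to A \to E \to E/A \to 0$ gives $\mathcal{H}^{-1}(A) = 0$, making $A$ a subsheaf of $E$. Nonnegativity and additivity of $\Im(Z_{\alpha,1/4}) = 4\alpha(4v_1 - v_0)$ on the heart then force $4a_1 - a_0 \in \{0, 1\}$, since both $A$ and $E/A$ lie in the heart and $4w_1 - w_0 = 1$.

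In the case $4a_1 - a_0 = 0$, the sheaf $A \in \mathbf{T}^{1/4}$ satisfies $\Im(Z(A)) = 0$, so each of its $\mu_{1/4}$-Harder--Narasimhan factors must also have $\Im(Z) = 0$; since $\mathbf{T}^{1/4}$ requires $\mu > 1/4$ for positive-rank factors, $A$ must be $0$-dimensional torsion, contradicting $A \subset E$ torsion-free. In the case $4a_1 - a_0 = 1$, I perform a parallel analysis of $\mathcal{H}^\bullet(E/A)$ with $\Im(Z(E/A)) = 0$: this shows $\mathcal{H}^{-1}(E/A)$ is $\mu$-semistable torsion-free of slope $1/4$ and $\mathcal{H}^0(E/A)$ is $0$-dimensional torsion, and the rank identity $(4n+3) - a_0 = -\rk(\mathcal{H}^{-1}(E/A))$ combined with $a_0 \leq \rk(E) = 4n+3$ forces $\mathcal{H}^{-1}(E/A) = 0$ and $a_0 = 4n+3$. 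Hence $E/A$ is $0$-dimensional torsion of some length $\ell \geq 0$, and the destabilizing condition (with equal imaginary parts) reduces to $\Re(Z(A)) \leq \Re(Z(E))$, which directly computes as $\ell \leq 0$; consequently $\ell = 0$ and $E/A = 0$, contradicting $A \subsetneq E$.

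Once stability is established, the inequality is immediate: $E$ stable gives $\Hom(E, E) = \matC$, Serre duality (trivial canonical on $S$) yields $\Ext^2(E, E) \simeq \Hom(E,E)^\vee = \matC$, and Mukai's Riemann--Roch gives $-v(E)^2 = \chi(E, E) = 2 - \ext^1(E, E) \leq 2$, i.e.\ $v(E)^2 = 16 w_1^2 - 2 w_0 w_2 \geq -2$. I expect the main obstacle to be the cohomology-sheaf analysis of the second case: one must verify that the rank bound $\rk(A) \leq \rk(E)$ (arising precisely because $E$ is a sheaf, not a complex) is already strong enough, together with $\Re(Z)$-comparison, to eliminate all destabilizing subobjects without needing to invoke genericity of the wall or the rank-$2$ hyperbolic lattice machinery.
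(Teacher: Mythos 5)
Your strategy (prove stability by checking all subobjects, then conclude by simpleness and Riemann--Roch) is genuinely different from the paper's, but the second case of your subobject analysis contains a circular step that is a real gap. When $4a_1-a_0=1$, the long exact sequence of cohomology sheaves reads
$0 \to \mathcal{H}^{-1}(E/A) \to A \to E \to \mathcal{H}^0(E/A) \to 0$,
so the kernel of the sheaf map $A\to E$ is exactly $\mathcal{H}^{-1}(E/A)$. The bound $a_0=\rk(A)\leq\rk(E)$, which you justify by saying $A$ is a subsheaf of $E$, therefore presupposes precisely the vanishing $\mathcal{H}^{-1}(E/A)=0$ that you want to deduce from it; rank additivity by itself only gives $a_0=\rk(E)+\rk\mathcal{H}^{-1}(E/A)$, which is consistent for any value of $\rk\mathcal{H}^{-1}(E/A)$. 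Nor does your $\Re Z$ comparison close this case as written: here $\mathcal{H}^{-1}(E/A)$ is a $\mu$-semistable torsion-free sheaf of slope exactly $1/4$, and to get $\Re Z(E/A)<0$ one needs an upper bound on its second Chern character. The Bogomolov inequality ($k_2\leq 6k_1$ for a class $(4k_1,k_1,k_2)$) is not strong enough at the relevant point: $W_l$ meets $\{\beta=1/4\}$ at $\alpha^2=1/32$, where Bogomolov still allows destabilizing classes. The case can be repaired -- the Jordan--H\"older (Gieseker-)stable factors of $\mathcal{H}^{-1}(E/A)$ are simple of slope $1/4$, hence of class $(4m,m,s)$ with $16m^2-8ms\geq -2$, forcing $s\leq 2m$, summing to $k_2\leq 2k_1$ and thus $\Re Z(\mathcal{H}^{-1}(E/A))>0$ for every $\alpha>0$ -- but that is a different argument from the one you wrote.

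There is also a structural gap at the very first step: $\widetilde{F}$ lies in $\calM_{\sigma_{\alpha,\beta}}[2,1,3]$, the moduli space of Bridgeland-semistable \emph{objects} on the wall, not in the Gieseker moduli space $\calM_S[2,1,3]$; the two agree only in the Gieseker chamber. On the wall $W_l$ itself there are semistable objects of class $(2,1,3)$ which are not sheaves (for instance $\calU_S^\vee\oplus\calI_x^\vee[1]$), so you may not assume $\mathcal{H}^{-1}(F)=0$, and with it you lose both $\mathcal{H}^{-1}(E)=0$ and the torsion-freeness used to kill the $0$-dimensional subobject in your first case. This is why the paper's proof avoids sheaf theory entirely: it takes the destabilizing subobject $E$ to be $\sigma$-semistable of the same finite slope as $F$ (any slope-infinity piece of $E$ would already destabilize $F$), notes that $\Im Z_{\alpha,1/4}$ takes values in $4\alpha\matZ_{\geq 0}$ on the heart, and since every Jordan--H\"older factor of $E$ has finite slope, hence $\Im Z\geq 4\alpha$, additivity together with $\Im Z(E)=4\alpha$ forces $E$ to have a single factor, i.e.\ to be stable. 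Your concluding Serre-duality step coincides with the paper's and is correct; I would graft it onto that shorter argument.
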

			
			\begin{proof}
				If $E$ is stable, then $\Hom_{\Coh^\beta(S)}(E,E)=\Hom_{\D^b(S)}(E,E)=\matC$ because any stable object is simple. Hence by Serre duality we get
				$v(E)^2=-\chi(E,E)=-(2-\Ext^1(E,E))\geq -2.$
				Moreover, note that for $\beta=\frac{1}{4}$, any semistable object of class $a=(a_0,a_1,a_2)$ satisfies 
				\begin{eqnarray}\label{EqnMinimalImZcannotbeDestab}
					\Im Z(a)=\alpha H^2\frac{1}{4}(4a_1-a_0) \geq \frac{1}{4}\alpha H^2.
				\end{eqnarray}
				But we have $\Im Z(v(E))=\alpha H^2(w_1-\beta w_0)=\frac{1}{4}\alpha H^2$, so $E$ cannot be strictly semistable, otherwise any of its proper Jordan-H\"older factor $A$ would satisfy $\Im Z(v(A))< \frac{1}{4}\alpha H^2$ which contradicts (\ref{EqnMinimalImZcannotbeDestab}).
				%Now suppose $E$ is strictly semistable. Consider (in $\Coh^\beta(S)$) its Jordan-H\"older filtration
				%$$0=E_0 \subset E_1 \subset \cdots \subset E_{m-1} \subset E_m=E,$$
				%with stable quotient $E_{k-1} \hookrightarrow E_{k} \twoheadrightarrow J_k$, $k=1,\dots,m$.
				%We have $\mu_Z(J_k)=\mu_Z(E_k)=\mu_Z(E)$ for all $k=1,\dots,m$. Fix $k>1$ (it exists as $E$ is strictly semistable), and consider the exact sequence
				%$E_{k-1} \hookrightarrow E_{k} \twoheadrightarrow J_k$.
				%Since $Z_{\alpha,\beta}$ gives a stability condition for $\beta=1/4$, by additivity of the imaginary part of $Z(-)$, we have $0<\Im(Z(vJ_k))<\Im(Z(vE_k))\leq \Im(Z(vE))$. Denoting $J_k=(j_0,j_1,j_2)$, we get
				%\begin{eqnarray*}
				%& & 0 < j_1-\frac{j_0}{4}<w_1-\frac{w_0}{4} \\
				%&\iff & \frac{j_0}{4} < j_1 < \frac{j_0+1}{4}
				%\end{eqnarray*} 
				%which cannot be satisfied since $j_1\in\matZ$. 
			\end{proof}

			Using (\ref{Eissphericalequality}) we get 
			\begin{eqnarray*}
				& & 16w_1^2-2w_0w_2\geq -2 \\
				&\iff & w_2 \leq 8\frac{w_1^2}{w_0} +\frac{1}{w_0}
			\end{eqnarray*}
			By straightforward computations, we have
			$$8\frac{w_1^2}{w_0} = 8\frac{(n+1)^2}{4n+3} = 2n+\frac{5}{2}+\frac{1}{2(4n+3)}.$$
			Combined with (\ref{infboundw2}) we get
			$$2n+\frac{5}{2} < w_2 \leq 2n+\frac{5}{2} + \frac{3}{2(4n+3)}.$$
			Hence the only possibility is $n=0$, which gives $w_0=3$, $w_1=1$ and $w_2=3$. We get $C=\frac{3}{16}$.

			\begin{proposition}\label{PropCircularWallDescriptionLeftSide}
				The circular numerical wall $W_l(3,1,3)$ with center $(C=\frac{3}{16},0)$ and radius $R=\frac{3}{16}$ is an actual wall $W_l$. It is induced by the following exact sequences 
				\begin{eqnarray}
					\calU_S^\vee \hookrightarrow F_S \twoheadrightarrow \calI_x^\vee[1] &\text{ if } & \beta<\frac{1}{3} \label{EqnDestabExSeqCircularWall} \\
					(\phi\phi^!F)_S \hookrightarrow F_S \twoheadrightarrow \calU^\vee_S[1] & \text{ if } & \frac{1}{3}< \beta \label{EqnDestabExSeqCircularWall2}
				\end{eqnarray}
				in $\Coh^{\beta}(S)$, where $F\in\calM_X$ is a non globally generated sheaf, for $[X] \in \calW$ a Fano threefold containing $S$ as in \S (\ref{setupFanog9}), and $\calI_x^\vee$ is the derived dual of the ideal sheaf of the point $x\in S$.
			\end{proposition}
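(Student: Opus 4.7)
The numerical wall $W_l$ is generated by the class $w=v(\calU_S^\vee)=(3,1,3)$. To show it is actual, I will exhibit on each side of the critical value $\beta=1/3$ (where $\calU_S^\vee$ crosses from $\mathbf{T}^\beta$ into $\mathbf{F}^\beta$) a short exact sequence in $\Coh^\beta(S)$ with all three constituents lying on the wall. As the sub and quotient are then forced to share the same $\mu_Z$-slope as $F_S$ by numerical considerations, this exhibits $F_S$ as strictly $\sigma_{\alpha,\beta}$-semistable on $W_l$, as required.

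\textbf{The sequence for $\beta<1/3$.} Fix $F\in\calM_X$ non globally generated; by Proposition~\ref{PropDescriptionSheafMx217} it fits in exact sequences on $X$:
\[ 0\to \calO_X\to \calU^\vee\to I\to 0, \qquad 0\to I\to F\to \calO_{L_F}(-1)\to 0. \]
I restrict both to $S$: the first via Lemma~\ref{lemmatorsubvartorsionfree} (applied to the torsion-free sheaf $I$), the second via Lemma~\ref{lemmahigherpullbacktorsionfree} (applied to the pure sheaf $\calO_{L_F}(-1)$ whose support $L_F$ does not lie in $S$), obtaining
\[ 0\to \calO_S\to \calU_S^\vee\to I_S\to 0, \qquad 0\to I_S\to F_S\to \calO_x\to 0, \]
with $x=L_F\cap S$ a reduced point (since $L_F\cdot H_S=1$). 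Splicing produces a natural morphism $\alpha\colon\calU_S^\vee\to F_S$ with $\ker\alpha=\calO_S$ and $\coker\alpha=\calO_x$, and the octahedral axiom yields a triangle
\[ \calO_S[1]\to \mathrm{cone}(\alpha)\to \calO_x\xrightarrow{\delta}\calO_S[2] \]
in $\D^b(S)$. The main technical step is the identification $\mathrm{cone}(\alpha)\cong \calI_x^\vee[1]$, equivalently $\delta\neq 0$ (since $\calI_x^\vee[1]$ realizes the unique non-split extension via its own truncation triangle, coming from dualizing $0\to \calI_x\to \calO_S\to \calO_x\to 0$). I carry this out by applying $\Ext^\bullet(\calO_x,-)$ to the first short exact sequence: the class of $\delta$ equals the image of the class of the second sequence under the connecting map $\Ext^1(\calO_x,I_S)\to \Ext^2(\calO_x,\calO_S)$, and this connecting map is injective because $\Ext^1(\calO_x,\calU_S^\vee)=0$ (as $\calU_S^\vee$ is locally free and $\Ext^i(\calO_x,\calO_S)=0$ for $i<2$). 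The class of the second sequence is itself non-zero since $F_S$ is torsion-free while $\calO_x$ is torsion, so the second sequence cannot split.

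\textbf{The sequence for $\beta>1/3$ and conclusion.} For the other side of the wall, I use the exact sequence $0\to \calU^\vee\to \phi\phi^!F\to F\to 0$ on $X$ (\cite{BrambillaFaenzig9}, Lemma~4.3), restrict via Lemma~\ref{lemmatorsubvartorsionfree} to obtain $0\to \calU_S^\vee\to (\phi\phi^!F)_S\to F_S\to 0$ on $S$, and rotate the associated triangle to get $(\phi\phi^!F)_S\to F_S\to \calU_S^\vee[1]$ in $\D^b(S)$. For the heart conditions: $\calU_S^\vee$ is $\mu$-stable with slope $1/3$ (by an argument analogous to Lemma~\ref{lemmaphiphi!stable}), $F_S$ is $\mu$-stable with slope $1/2$ (Lemma~\ref{LemmaStableSheavesAreMuStableAndRestriction}), and $(\phi\phi^!F)_S$ is $\mu$-stable with slope $2/5$ (Lemma~\ref{lemmaphiphi!stable}); furthermore $\calI_x^\vee[1]\in\Coh^\beta(S)$ for $\beta\geq 0$ via its truncation triangle $\calO_S[1]\to \calI_x^\vee[1]\to \calO_x$, since $\calO_S\in\mathbf{F}^\beta$ and $\calO_x\in\mathbf{T}^\beta$. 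Hence the first triangle becomes a short exact sequence in $\Coh^\beta(S)$ for $0<\beta<1/3$, the second for $1/3<\beta<2/5$, and the arc of $W_l$, which satisfies $0<\beta<3/8<2/5$, is covered on both sides of $\beta=1/3$. This proves that $W_l$ is an actual wall and that the stated sequences are the destabilizing ones.
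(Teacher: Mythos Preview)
Your argument is correct and essentially parallels the paper's, but the identification of the cone $\mathrm{cone}(\alpha)\simeq \calI_x^\vee[1]$ is handled differently, and this is worth noting. The paper first shows that $C(f)$ lies in $\Coh^\beta(S)$ and is $\sigma_{\alpha,\beta}$-stable for $0<\beta<1$ (via a minimality argument on $\Im Z$), then proves a separate lemma that $\calM_{\sigma_{\alpha,\beta}}[-1,0,0]=\{\calI_x^\vee[1]\mid x\in S\}\simeq S$, and deduces $C(f)=\calI_x^\vee[1]$ by matching the cohomology sheaf $\calH^0$. Your route is more direct: you run the octahedral axiom on $\calU_S^\vee\to I_S\to F_S$, and then show the connecting class $\delta\in\Ext^2(\calO_x,\calO_S)$ is nonzero by pushing the (necessarily non-split) extension class of $0\to I_S\to F_S\to\calO_x\to 0$ through the injective map $\Ext^1(\calO_x,I_S)\hookrightarrow\Ext^2(\calO_x,\calO_S)$ coming from $\Ext^1(\calO_x,\calU_S^\vee)=0$. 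This avoids any moduli-theoretic input and pins down the specific point $x=L_F\cap S$ immediately. The trade-off is that the paper's detour also establishes the stability of the quotient $\calI_x^\vee[1]$ on the wall and the description of $\calM_\sigma[-1,0,0]$, both of which are reused in the subsequent wall-crossing analysis; your argument gives the destabilizing sequence but leaves the stability of the Jordan--H\"older factors to be checked separately when that is needed later.
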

			
			For the definition of $\phi\phi^!$, see \S \ref{sectionHPD}. When $F\in\calM_X$ is globally generated, then (\ref{EqnDestabExSeqCircularWall}) cannot happen by Proposition~\ref{PropDescriptionSheafMx217}. On the otherhand, given another $G\in\calM_X$ we know that $(\phi\phi^!F)_S\simeq (\phi\phi^!G)_S$ implies that $F\simeq G$ (Proposition~\ref{PropResMorphGiveOpenImmersion}), in particular $F_S$ and $G_S$ do not become $S$-equivalent on the wall.
			
			%Note that for $\frac{1}{3}<\beta$ and any $G\in\calM_S$, any extension
			%$$0 \to \calU_S^\vee \to R \to G \to 0$$
			%is slope-stable by \cite{YoshiokaSomeExamplesMukaiReflectionsK3Surf}, Lemma $2.1$. Though, if $G=F_S$ comes from a globally generated sheaf $F\in \calM_X$, then $\Ext^1(G,\calU^\vee_S)=1$ and two such sheaves do not become $S$-equivalent on the wall.
			
			\begin{proof}
				Recall from Proposition \ref{PropDescriptionSheafMx217} that for any non globally generated sheaf $F\in\calM_X$, there is a map $f\colon \calU^\vee_S \to F_S$ which induces an exact sequence
				\begin{eqnarray}\label{EqnDestabLongExSeqSheavesCircularWall}
					0 \to \calO_S  \to \calU^\vee_S \xrightarrow{f} F_S \to \calO_x \to 0.
				\end{eqnarray}
				Consider the cone $C(f)\in\D^b(S)$. It lies in $\Coh^\beta(S)$ for any $\beta>0$. A similar argument as in Lemma~\ref{lemmaEspherical} shows that $C(f)$ is $\mu_Z$-stable for any $0<\beta<1$.

				\begin{lemma}\label{LemmaM(-100)}
					We have $\calM_{\sigma_{\alpha,\beta}}[-1,0,0] = \{ \calI_x^\vee[1] \ | \ x\in S\} \simeq S$ for any $\beta>0$.
				\end{lemma}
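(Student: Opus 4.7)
The plan is to proceed in three steps: compute the cohomology sheaves of $\calI_x^\vee[1]$ to place it in $\Coh^\beta(S)$, establish its $\sigma_{\alpha,\beta}$-stability, and then identify the moduli space with $S$ by a dimension-count argument.

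First I would identify $\calI_x^\vee[1]$ as an object of $\Coh^\beta(S)$. Starting from the sequence $0 \to \calI_x \to \calO_S \to \calO_x \to 0$ and applying $R\mathcal{H}om(-,\calO_S)$, using Serre duality on $S$ (which gives $\calE xt^2(\calO_x,\calO_S)\simeq\calO_x$ and $\calE xt^i(\calO_x,\calO_S)=0$ for $i\ne 2$) together with $\calH om(\calI_x,\calO_S)\simeq\calO_S$, I compute that the cohomology sheaves of $\calI_x^\vee[1]$ are $\calH^{-1}=\calO_S$ and $\calH^0=\calO_x$. For any $\beta>0$, one has $\calO_S\in F^\beta$ (since $\mu_\beta(\calO_S)=-\beta\le 0$) and $\calO_x\in T^\beta$ (torsion), so $\calI_x^\vee[1]\in\Coh^\beta(S)$ and it fits in an exact sequence
\begin{equation*}
0\to \calO_S[1]\to \calI_x^\vee[1]\to \calO_x\to 0
\end{equation*}
in $\Coh^\beta(S)$. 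A direct Mukai-vector calculation gives $v(\calI_x^\vee[1])=(-1,0,0)$, so in particular $v^2=0$.

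Next I would prove $\sigma_{\alpha,\beta}$-stability. A phase comparison shows $\phi(\calO_S[1])<\phi(\calI_x^\vee[1])<1=\phi(\calO_x)$, so $\calO_S[1]$ does not destabilise. The essential point is to classify subobjects $A\hookrightarrow \calI_x^\vee[1]$ in $\Coh^\beta$. Either $A$ is contained in $\calO_S[1]$, or $A$ surjects onto the simple object $\calO_x$. In the first case the long exact sequence of cohomology forces $\calH^{-1}(A)$ to be a saturated subsheaf of $\calO_S$ (because $\calH^{-1}(Q)\in F^\beta$ is torsion-free), which leaves only $0$ and $\calO_S$ as possibilities; tracing through $\calH^0$ shows $A=0$ or $A=\calO_S[1]$. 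In the second case, using that $\Ext^2(\calO_x,\calO_S)=\matC$ is generated by the non-trivial extension class defining $\calI_x^\vee[1]$, I compute $\Hom(\calO_x,\calI_x^\vee[1])=0$; this prevents a splitting and forces the subobject to contain $\calO_S[1]$, hence to equal the full object. This will be the main obstacle, and the argument must be done cleanly by cohomology-sheaf analysis.

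Finally, stability gives a classifying morphism $f:S\to \calM_\sigma[(-1,0,0)]$, and the degree-$0$ cohomology $\calH^0(\calI_x^\vee[1])=\calO_x$ separates points of $S$, so $f$ is injective. By Theorem \ref{ThmModuliStableObjectIsHKvaranddim}, since $(-1,0,0)$ is primitive with $v^2=0$, the moduli space is either empty or a smooth projective irreducible holomorphic symplectic surface of dimension $2$; non-emptiness follows from the stability we just established. An injective morphism between two smooth projective irreducible surfaces is an isomorphism (it is generically finite of degree $1$, hence birational, and any birational morphism of smooth surfaces that is injective is an isomorphism), so $f:S\xrightarrow{\sim}\calM_{\sigma_{\alpha,\beta}}[(-1,0,0)]$, proving the lemma.
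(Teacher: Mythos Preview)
Your overall strategy matches the paper's: establish that each $\calI_x^\vee[1]$ is $\sigma_{\alpha,\beta}$-stable, then invoke Theorem \ref{ThmModuliStableObjectIsHKvaranddim} to see that the moduli space is a $2$-dimensional irreducible holomorphic symplectic variety (hence a K3 surface) which, containing $S$, must coincide with it. The paper's proof of stability is a one-line reference to the minimality-of-$\Im Z$ trick already used in Lemma \ref{lemmaEspherical}: for suitable $\beta$ the value $\Im Z(-1,0,0)=16\alpha\beta$ is the minimal strictly positive value of $\Im Z$ on $\Lambda$, so $\calI_x^\vee[1]$ is \emph{simple} in $\Coh^\beta(S)$ and therefore stable. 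Your explicit subobject classification is a different, more laborious route to the same goal.

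There is, however, a genuine gap in your Case 1. When $\calH^{-1}(A)=0$ you assert that ``tracing through $\calH^0$ shows $A=0$ or $A=\calO_S[1]$'', but this does not follow from what you wrote. A sheaf $A\in T^\beta$ is a subobject of $\calO_S[1]$ in $\Coh^\beta(S)$ precisely when there is a short exact sequence of sheaves $0\to\calO_S\to G\to A\to 0$ with $G\in F^\beta$, and your saturation argument only controls $\calH^{-1}(A)$, not this extension. In fact for $\beta\ge 1$ one may take $G=\calO_S(H)$ and $A=\calO_C$ with $C\in|H|$, so over the full range ``$\beta>0$'' the claimed dichotomy is false; even for $0<\beta<1$ the positive-rank case requires an argument you have not supplied. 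You should either complete this case analysis (showing any such $A$ has $\mu_Z(A)<\mu_Z(\calI_x^\vee[1])$) or adopt the paper's shortcut. Your final paragraph identifying the moduli space with $S$ is correct and is exactly the paper's conclusion.
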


				\begin{proof}
					Once again, it is easy to prove that for any $0<\beta<1$ and $x\in S$, the object $\calI_x^\vee[1]$ is $\mu_Z$-stable. By Theorem \ref{ThmModuliStableObjectIsHKvaranddim}, $\calM_\sigma[-1,0,0]$ is a K3 surface, and since it contains $S$ it must be isomorphic to $S$.
				\end{proof}

				Hence $C(f) = \calI_x^\vee[1]$ for the point $x\in S$ appearing in (\ref{EqnDestabLongExSeqSheavesCircularWall}). We obtain the desired exact sequence (\ref{EqnDestabExSeqCircularWall}) in $\Coh^\beta(S)$ for all $0<\beta<\frac{1}{3}$.
				At the point $\sigma_0=(\frac{1}{3},\alpha)\in W_l(3,1,3)$, $\calU^\vee_S$ is a spherical object with $Z(v(\calU_S^\vee))=0$, so $\sigma_0$ is \textit{not} a stability condition (in other words, $\sigma_0$ is a hole of $W_l$).
				Finally, for $\frac{1}{3}<\beta$ the extension (\ref{UFSexseq}) give the exact sequence (\ref{EqnDestabExSeqCircularWall2}) in $\Coh^\beta(S)$, and it is easily seen to remain valid all along this part of the numerical wall $W_l(3,1,3)$.
				
			\end{proof}

			\subsubsection{Right side of the halfplane}

			In a analogous way as in \S \ref{subsubsectionleftside}, we can prove that there is no hole on the ray $\{\beta = 3/4\}$, so we restrict ourselves to $\beta=\frac{3}{4}$. Similar computations lead to two possible numerical walls, induced respectively by $w=(1,1,8)$ giving $C_8=\frac{13}{16}$ and $R_8=\frac{3}{16}$, and $w=(1,1,9)$ giving $C_9=\frac{15}{16}$ and $R_9=\frac{\sqrt{33}}{16}$.

			\begin{proposition}
				The numerical wall of center $(C_9=\frac{15}{16},0)$ and radius $R_9=\frac{\sqrt{33}}{16}$ is not an actual wall.
			\end{proposition}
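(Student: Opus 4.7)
The approach is to mirror the analysis of section \ref{subsubsectionleftside} on the symmetric ray $\{\beta = v_1/v_0 + \sqrt{Q} = 3/4\}$, and to extract a contradiction from the fact that on this wall the Jordan--H\"older factors of a hypothetical strictly semistable $F[1]$ are pinned too tightly to match the additivity relation.

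First I would verify that $\sigma_{\alpha, 3/4}$ is a legitimate stability condition for every $\alpha > 0$: the conditions $\delta_0 > 0$, $\mu_{3/4}(\delta) = 0$ and $\delta^2 = -2$ for a class $\delta \in \Lambda$ translate to $(\delta_0, \delta_1) = (4m, 3m)$ and then $4m(\delta_2 - 18m) = 1$, which has no solution in $\matZ$. The numerical wall $W_9$ crosses the ray $\{\beta = 3/4\}$ at $\sigma_0 := \sigma_{\alpha_0, 3/4}$ with
\begin{eqnarray*}
\alpha_0^2 = R_9^2 - (C_9 - 3/4)^2 = \frac{33 - 9}{256} = \frac{3}{32},
\end{eqnarray*}
a legitimate (non-hole) point; by Remark \ref{RmkBrokenWallsDoNotMatter} it suffices to rule out $W_9$ being actual at $\sigma_0$.

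Suppose for contradiction that $W_9$ is actual at $\sigma_0$. Then some $F \in \calM_S$ makes $F[1] \in \Coh^{3/4}(S)$ strictly $\sigma_0$-semistable, and hence admits a Jordan--H\"older filtration of length $k \geq 2$ with $\sigma_0$-stable factors $F_1, \dots, F_k$ of the same phase. Writing $\Im Z_0(v) = 4\alpha_0(4 v_1 - 3 v_0)$ (in the integer shorthand of section \ref{SectionComputationsWalls}) and using $4 v_1 - 3 v_0 \geq 1$ by integrality, the minimum positive value of $\Im Z_0$ on the heart is $4\alpha_0$. Combined with $\Im Z_0(v(F[1])) = 8\alpha_0$ and additivity of $\Im Z_0$, this forces $k = 2$ and $\Im Z_0(v(F_i)) = 4\alpha_0$ for $i = 1, 2$; the same minimality argument as in Lemma \ref{lemmaEspherical} re-confirms that each $F_i$ is actually $\sigma_0$-stable.

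The last step is pure arithmetic. Writing $v(F_i) = (w_0, w_1, w_2)$, the condition $4 w_1 - 3 w_0 = 1$ together with the wall equation $C_9 = 15/16$ (equivalently $w_2 = 15 w_1 - 6 w_0$) forces
\begin{eqnarray*}
v(F_i) = (4 n_i + 1, 3 n_i + 1, 21 n_i + 9), \quad n_i \in \matZ,
\end{eqnarray*}
and a direct computation gives $\langle v(F_i), v(F_i) \rangle = -24 n_i^2 - 18 n_i - 2$. The stability bound $\langle v(F_i), v(F_i) \rangle \geq -2$ selects only $n_i = 0$, so $v(F_1) = v(F_2) = (1, 1, 9)$. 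But then $v(F_1) + v(F_2) = (2, 2, 18)$, incompatible with $v(F[1]) = (-2, -1, -3)$. Hence $W_9$ is not actual. The delicate point to watch is the simultaneous stability of both Jordan--H\"older factors: it rests on the numerical coincidence $\Im Z_0(v(F[1])) = 2 \cdot 4 \alpha_0$ together with the minimality argument of Lemma \ref{lemmaEspherical}; once these are granted, the rest is elementary integer arithmetic.
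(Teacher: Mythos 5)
Your proof is correct, but it follows a genuinely different route from the paper's. The paper takes the destabilizing sequence $E\hookrightarrow F\twoheadrightarrow T$ in $\Coh^{3/4}(S)$, shows by inspecting the slopes of the cohomology sheaves $\calH^{-1}E,\calH^{-1}T$ that the sequence survives the change of heart down to $\Coh^{2/3}(S)$, and then derives the contradiction from the vanishing $Z_{\alpha,2/3}(t)=0$ of the central charge of the quotient class $t=(-3,-2,-12)$ at the point where $W(1,1,9)$ meets the ray $\{\beta=2/3\}$ (after checking via a spherical-class computation that this point is a genuine stability condition). You instead stay at the single point $\sigma_0=\sigma_{\alpha_0,3/4}$ and run a purely lattice-theoretic argument: the quantization $\Im Z_0\in 4\alpha_0\matZ_{\geq 0}$ on the heart forces exactly two Jordan--H\"older factors, each with $\Im Z_0=4\alpha_0$; the phase-agreement (wall) condition together with $4w_1-3w_0=1$ parametrizes their Mukai vectors as $(4n+1,3n+1,21n+9)$; the bound $\langle w,w\rangle=-24n^2-18n-2\geq -2$ for stable objects forces $n=0$ for both factors; and the sum $(2,2,18)\neq(-2,-1,-3)$ gives the contradiction. (I checked the arithmetic: $n=-1$ would give the complementary class $(-3,-2,-12)$ with square $-8<-2$, so it is correctly excluded.) Your approach is essentially the technique of section \ref{subsubsectionleftside} pushed one step further, applied to both factors simultaneously; it avoids the large-volume description of $\calH^{-1}F,\calH^0F$ and the heart-comparison entirely, at the price of being less geometric about which class degenerates. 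Two minor points to tighten: the destabilized object need not be $F[1]$ for a Gieseker-stable $F$ (only its class $(-2,-1,-3)$ in $\Coh^{3/4}(S)$ is used, so this is harmless), and the relation $w_2=15w_1-6w_0$ should be justified either by writing out phase agreement at $\sigma_0$ explicitly or by invoking the fact that distinct numerical walls for a fixed $v$ are disjoint, so that $v(F_i)$ must generate the wall of center $C_9$ via the formula of Proposition \ref{PropMaciociaComputations}.
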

			
			\begin{proof}
				
				Note that the circle of center $C_9$ and radius $R_9$ cross the ray $\{\beta=2/3\}$ at $\alpha^2=R_9^2 - (\frac{2}{3}-C)^2=\frac{1}{18}$.
				
				\begin{lemma}
					Any vector $\delta=(\delta_0,\delta_1,\delta_2)\in\Lambda$ with $\delta_0>0$, $\delta^2=-2$ and $\mu_{2/3}(\delta)=0$ satisfies $\Re Z_{\alpha,2/3}(\delta)>0$ whenever $\alpha^2>1/72$.
				\end{lemma}
				
				\begin{proof}
					Rewriting the equations on $\delta$ give
					\begin{eqnarray*}
						8\delta_1^2 &=& \delta_0\delta_2-1 \\
						2\delta_0 &=& 3\delta_1.
					\end{eqnarray*}
					In particular, we get $\delta_1,\delta_2>0$ and $\dfrac{\delta_2}{\delta_1}=\dfrac{16}{3}+\dfrac{2}{3\delta_1^2}$. We get
					\begin{eqnarray*}
						\Re Z(\delta)>0 &\iff & \delta_1(\frac{16}{3}+12\alpha^2) > \delta_2 \\
						&\iff & \frac{16}{3}+12\alpha^2 > \frac{\delta_2}{\delta_1}= \frac{16}{3}+\frac{2}{3\delta_1^2} \\
						&\iff & \alpha^2 > \frac{1}{18\delta_1^2}
					\end{eqnarray*}
					Since $\delta_0=\frac{3}{2}\delta_1$, we have $\delta_1\geq 2$. In particular $\alpha^2>1/72$ works for all $\delta$'s.

				\end{proof}

				Let $(\frac{3}{4},\alpha_0)$ be the intersection of the numerical wall $W(1,1,9)$ with the ray $\{\beta=\frac{3}{4}\}$, and consider a destabilizing exact sequence
				\begin{eqnarray}\label{EqnEFTExSeqCohBetaRightSide}
					E \hookrightarrow F \twoheadrightarrow T.
				\end{eqnarray}
				Recall that $v(F)=(-2,-1,-3)$, and we assume by symmetry that $v(E)=(1,1,9)$ and $v(T)=(-3,-2,-12)$.
				We can assume that $F$ is $\sigma_{\alpha_0+\varepsilon,\frac{3}{4}}$-stable, and since $W(1,1,9)$ is the largest circular wall crossing $\{\beta=\frac{3}{4}\}$ (on the right side of the vertical wall $W_v$), we can assume that $F$ is $\sigma_{\alpha,\frac{3}{4}}$-stable for all $\alpha\gg 0$ (in fact, it is $\sigma$-stable for all $\sigma$ above the wall). By a similar argument as for Theorem \ref{ThmGiesekerChamberModuliStabCond} (see \cite[Lem. 6.18]{MacriSchmidtLecturesBridgelandStability}), one can prove that $\calH^0(F)$ is a torsion sheaf supported in dimension $0$ and $\calH^{-1}F$ is a slope-semistable torsion-free sheaf. 
				
				Consider the long exact sequence (in $\Coh(S)$)
				$$0 \to  \calH^{-1}E \to \calH^{-1}F \to \calH^{-1}T \to \calH^0E \to \calH^0F \to \calH^0T \to 0$$
				induced by (\ref{EqnEFTExSeqCohBetaRightSide}). We have $\rk(\calH^0T)=0=\deg(\calH^0T)$, hence $\rk(\calH^{-1}T)=3$ and $\deg(\calH^{-1}T)=2$. In particular, any subsheaf $G \subset \calH^{-1}T$ satisfying $\mu_H(G)<\frac{3}{4}$ also satisfies $\mu_H(G)<\frac{2}{3}$, thus $\calH^{-1}T\in \mathbf{F}^{2/3}$. Moreover, either $\calH^{-1}E=0$ (in which case $E=\calH^0E$ has rank $1$ and hence is $\mu_H$-stable) or $\mu_H(\calH^{-1}E) \leq \mu_H(\calH^{0}F)=\frac{1}{2}$, thus $\calH^{-1}E\in \mathbf{F}^{2/3}$. From this observation, we deduce that the exact (in $\Coh^{3/4}(S)$) sequence (\ref{EqnEFTExSeqCohBetaRightSide}) still holds in $\Coh^{2/3}(S)$. But then at $\beta=2/3$ we have $Z(T)=0$ which is absurd (this would be in contradiction with $F$ being $\sigma_{\alpha_0,\frac{2}{3}}$-stable, since $(\alpha_0,\frac{2}{3})$ is above the wall).
			\end{proof}

			We will see next section (see Proposition \ref{PropRightWallIsReflectLeftWall}) that $W_r$ is an actual wall which is the reflection of $W_l$ in the vertical wall $W_v$. In particular, $W_l$ and $W_r$ induce the same wall in $\Mov(\calM_S)$.

			\subsection{Crossing the walls}\label{SectionCrossingTheWalls}

			In this section we study the wallcrossing of  $W_v$ (vertical wall), $W_l$ (circular wall on the left side of $W_v$) and $W_r$ (circular wall on the right side of $W_v$), following \cite[\S 5]{BMMMPwallcrossing}. By wallcrossing, we mean the birational transformation relating the two moduli spaces corresponding to the chambers on each side of the wall. For our purposes, it is enough to know that when the wall is \textit{flopping} (resp. \textit{divisorial}), the moduli spaces are related by a flop (resp. are isomorphic). In addition, a wall is called \textit{totally semistable} is all stable objects (with respect to a stability condition sufficiently close to the wall) become strictly semistable with respect to a stability condition on the wall.
			
			Consider the hyperbolic lattice 
			\begin{eqnarray}\label{EqnHHyperbolicLatticeAssociatedToAWall}
				& & \calH_W \coloneqq \{ w\in \widetilde{H}(S,\matZ) \ | \ \Im \dfrac{Z(w)}{Z(v)}=0 \text{ for all } \sigma=(Z,\calP)\in W \}
			\end{eqnarray}
			associated to each wall (see \cite[Prop. 5.1]{BMMMPwallcrossing}). Thanks to \cite[Thm. 5.7]{BMMMPwallcrossing}, the type of wall is completely determined by $\calH_W$ (we only cite here the conditions that we will use):
			
			\begin{itemize}
				\item The wall $W$ is divisorial if there exists an isotropic class $w\in\calH_W$ with $(w,v)=1$ (type \textit{Hilbert-Chow}) or $(w,v)=2$ (type \textit{Li-Gieseker-Uhlenbeck}), or a spherical class $s\in\calH_W$ with $(s,v)=0$ (type \textit{Brill-Noether}).
				\item Otherwise, $W$ is flopping if there exists a spherical class $s\in\calH_W$ with $0<(s,v)\leq (v,v)/2$.
				\item In addition, $W$ is totally semistable if and only if there exists an isotropic class $w\in\calH_W$ with $(w,v)=1$, or an \textit{effective} spherical class (in the sense of \cite[Prop. 5.5]{BMMMPwallcrossing}) $s\in \calH_W$ with $(s,v)<0$.
			\end{itemize}
			
			%\begin{theorem}[\cite{BMMMPwallcrossing}, Thm. 5.7]
			%	The wall $\calW$ is totally semistable if and only if there exists an isotropic class $w\in\calH_W$ with $(w,v)=1$, or an \textit{effective} spherical class (in the sense of \cite[Prop. 5.5]{BMMMPwallcrossing}) $s\in \calH_W$ with $(s,v)<0$. Moreover, the wall $\calW$ is:
			%	\begin{itemize}
				%		\item divisorial if there exists an isotropic class $w\in\calH_W$ with $(w,v)=1$ or $2$, or a spherical class $s\in\calH_W$ with $(s,v)=0$.
				%		\item flopping if there exists two positive class (in the sense)
				%	\end{itemize}
			%\end{theorem}
			
			In view of the proof of Proposition \ref{propallstabaregeometric}, we see that for any wall $W$ intersecting $U(S)$, a class $w\in\Lambda$ lies in $\calH_W$ if and only if $\Im \frac{Z(w)}{Z(v)}=0$ for any $\sigma\in W\cap \matH$. In otherwords, we can focus our attention to stability conditions of the form $\sigma_{\alpha,\beta}$ only.
			
			Let us denote $\calM_{\alpha,\beta}\coloneqq\calM_{\sigma_{\alpha,\beta}}[v]$.

			\subsubsection{The vertical wall $W_v$}

			Set $W=W_v$, then
			$$\calH_W= \Span_\matZ((2,1,4), (0,0,1)) = \{(2a,a,b)\in \Lambda \ | \ a,b\in\matZ\}.$$
			In particular, for any $w\in\calH_W$, we have $w^2=4a(4a-b)$ so $w$ cannot be spherical, and $\langle v,w\rangle$ is even. Moreover, the class $(2,1,4)$ is an isotropic class lying in $\calH_W$. Therefore the wall $W$ is divisorial of type Li-Gieseker-Uhlenbeck. In particular, the wall $W$ is a \textit{bouncing wall}, in the sense that the image of the chambers on both side of the wall in $\Stab^+(S)$ are sent to the same chamber in $\Mov(\calM_S)$ \cite[Lem. 10.1]{BMMMPwallcrossing}. For $\beta_-<\frac{1}{2}<\beta_+$ close enough to $\frac{1}{2}$ and admissible $\alpha$'s, both moduli spaces $\calM_{\alpha,\beta_\pm}$ are isomorphic. 
			
			Note that by Theorem \ref{ThmGiesekerChamberModuliStabCond}, $\calM_{\alpha,\beta_-}\simeq \calM_S$, and the birational transformation when hitting the wall is the map to the Uhlenbeck compactification as constructed in \cite{LiAGInterpretationDonaldsonPolynInv}. It contracts the non-locally free sheaves $F\in\calM_S$, as we see in (\ref{ExactTriangleForVerticalWall}).

			\subsubsection{The circular wall $W_l$}

			Set $W=W_l$, then
			$$\calH_W= \Span_\matZ((0,1,3),(1,0,0)) = \{(a,b,3b)\in \Lambda \ | \ a,b\in\matZ\}.$$
			Note that there is a hole in $W_l$ at $\beta=\frac{1}{3}$. For $\beta<\frac{1}{3}$, $\calU^\vee_S$ is a stable object (see Lemma \ref{lemmaEspherical}), so this side of $W_v$ is not totally semistable. But for $\beta>\frac{1}{3}$, $-w=-(3,1,3)$ is an effective spherical class with $\langle -(3,1,3), v\rangle = -1<0$, so this portion of $\calW_l$ is totally semistable. Note that $-(3,1,3)$ corresponds to the $\sigma$-semistable object $\calU^\vee[1]$.

			For any $w=(a,b,3b)\in\calH_W$, $w^2=16b^2-6ab$ and $\langle w,v\rangle = 10b-3a$. Condition $\langle w,v\rangle=1$ gives $a=10k+3$, $b=3k+1$ with $k\in\matZ$. For such $a,b$, we cannot have $w^2=0$. In particular, there is no isotropic class $w$ satisfying $\langle w,v\rangle =1$. The same argument show that there is no isotropic class $w$ with $\langle w,v\rangle =2$ nor spherical class $s$ with $\langle s,v\rangle =0$.

			On the otherhand, $v(\calU^\vee_S)=(3,1,3)$ is a spherical class such that $0<\langle (3,1,3),v\rangle =1\leq 2=\frac{v^2}{2}$. Hence $W_l$ is a flopping wall. Pick $\sigma_\pm$ on each side of the wall near a stability condition $\sigma_{\alpha,\beta}\in W_l$ with $\beta<\frac{1}{3}$. Recall that on the wall $W_l$, any stable object of class $(-1,0,0)$ is of the for $\calI_x^\vee[1]$ for some $x\in S$. Similarly, by Theorem \ref{ThmModuliStableObjectIsHKvaranddim} the only stable object of class $(3,1,3)$ on $W_l$ is $\calU^\vee_S$. Indeed, pick a stability condition $\sigma_{\alpha,\beta}$ on $W_l$ and a stable object $E$, if $\sigma_{\alpha,\beta}$ is not generic with respect to $(3,1,3)$ then take a nearby generic stability condition $\sigma_{\alpha,\beta+\varepsilon}$. By openness of stability, $E$ is still $\sigma_{\alpha,\beta+\varepsilon}$-stable, hence isomorphic to $\calU_S^\vee$.

			\begin{proposition}\label{LemmaLeftWallIsFlopP2BundleOnS}
				The wallcrossing of $\calW_l$ is a flop along a $\matP^2$-bundle over $S$.
			\end{proposition}
			
			\begin{proof}
				We consider the part of the wall for which $\beta < 1/3$, which is simpler. 
				From Proposition \ref{PropCircularWallDescriptionLeftSide}, the objects $E\in \calM_S$ that get destabilized on the wall have the following Jordan-Hölder filtration:
				$$\calU_S^\vee \hookrightarrow E \twoheadrightarrow \calI_x^\vee[1]$$
				for some $x\in S$. We have $\ext^1(\calI_x[1]^\vee,\calU^\vee) = \hom(\calI^\vee_x,\calU^\vee) =\hom(\calU,\calI_x)=3$ (this can be proved by direct computations, using $H^0(\calU)=H^1(\calU)=0$ \cite[Lem. 3.1]{BrambillaFaenzig9}). We obtain that moving the stability condition to the wall contracts a $\matP\Ext^1(\calI_x^\vee[1],\calU^\vee_S)\simeq \matP^2$ of objects to $\calI_x^\vee[1]$ for each $x\in S$. 
				
				%	Moreover, moving the stability condition to the other side of the wall does not induce an isomorphism between the moduli spaces on each side. This is a consequence of the wall being \textit{flopping}, but we also give a direct argument later in \S \ref{SectionIdentifyingTheBirationalModels}.
				
				%	We have $\ext^1(\calI_x[1]^\vee,\calU^\vee) = \hom(\calI^\vee_x,\calU^\vee) =\hom(\calU,\calI_x)=3$ (this can be proved by direct computations, using $H^0(\calU)=H^1(\calU)=0$ \cite[Lem. 3.1]{BrambillaFaenzig9}). We obtain, in the case $\beta<1/3$, that the wallcrossing contract a $\matP\Ext^1(\calI_x^\vee[1],\calU^\vee_S)\simeq \matP^2$ of object to $\calI_x^\vee[1]$ for each $x\in\matS$.
				%	
				%	When $\beta>1/3$, two sheaves $F_S,G_S$ become $S$-equivalent on the wall if $(\phi\phi^!F)_S\simeq (\phi\phi^!G)_S$, which by Proposition \ref{PropFGnotlfggMFisLG} implies that there exists two lines (denoted $L_F$ and $L_G$) intersecting on a point $x\in S$. 
				
			\end{proof}

			%In the proof of \cite[Prop. 9.1]{BMMMPwallcrossing} we see that the wallcrossing give a birational transformation $f\colon  \calM_{\sigma_+}[v] \dashrightarrow \calM_{\sigma_-}[v]$ which contracts the objects $E$ obtained as extensions (in $\Coh^\beta(S)$)
			%$$\calU^\vee \hookrightarrow E \twoheadrightarrow \calI_x[1]^\vee,$$
			%for $x\in S$. We have $\ext^1(\calI_x[1]^\vee,\calU^\vee) = \hom(\calI^\vee_x,\calU^\vee) =\hom(\calU,\calI_x)=3$ (this can be proved by direct computations, using $H^0(\calU)=H^1(\calU)=0$ \cite[Lem. 3.1]{BrambillaFaenzig9}). Hence the birational transformation $f$ is a flop along the $\matP^2$-bundle over $S$, which fibre over $x\in S$ is $\matP \Ext^1(\calI_x[1],\calU)$ obtained this way.
			%
			%Finally, the same phenomenon occurs for $\beta>\frac{1}{3}$. In view of Proposition \ref{PropCircularWallDescriptionLeftSide} the objects which are $S$-equivalent to each other on the wall are the sheaves $F$ for which $\ext^1(F,\calU_S^\vee)>1$, or equivalently $\Hom(\calU_S^\vee,F)\neq 0$. These sheaves are exactly the one described on the other part of $W_l$.

			\subsubsection{The circular wall $W_r$}

			It turns out that the potential wall $W_r$ on the right side of $W_v$ is the same as the circular wall $W_l$ up to a reflection.

			\begin{proposition}\label{PropRightWallIsReflectLeftWall}
				The wall $W_r$ is the image of $W_l$ by the reflection in the vertical wall $W_v$. In particular, $W_r$ and $W_l$ have the same image in $\Mov(S)$ via the map $l\colon \Stab^+(S) \to \Mov(S)$ of Theorem \ref{ThmBMChambStabisChambMov}.
			\end{proposition}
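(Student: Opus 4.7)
The plan is to exhibit an autoequivalence $\Phi$ of $\D^b(S)$ that (a) preserves the Mukai vector $v=(2,1,3)$, and (b) acts on the geometric part $U(S)\subset \Stab^+(S)$ as the reflection $\sigma_{\alpha,\beta}\mapsto \sigma_{\alpha,1-\beta}$ through the vertical wall $W_v=\{\beta=1/2\}$. Combined with the equivariance of the Bayer--Macri map $l$, this will immediately yield both claims of the proposition.

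Concretely, I would take $\Phi$ to be the composition of the derived dual $R\mathcal{H}om(-,\calO_S)$ (shifted appropriately so as to be a covariant autoequivalence of the $2$-Calabi--Yau category $\D^b(S)$) with $-\otimes\calO_S(H)$. On Mukai vectors, the derived dual acts as $(v_0,v_1,v_2)\mapsto(v_0,-v_1,v_2)$ while $-\otimes\calO_S(H)$ acts as $(v_0,v_1,v_2)\mapsto(v_0,v_0+v_1,v_2+16v_1+8v_0)$ using $H^2=16$. Their composition fixes $(2,1,3)$, verifying (a). On stability conditions, $-\otimes\calO_S(H)$ sends $\sigma_{\alpha,\beta}$ to $\sigma_{\alpha,\beta+1}$, the (suitably shifted) derived dual sends $\sigma_{\alpha,\beta}$ to $\sigma_{\alpha,-\beta}$, and the composite is therefore exactly $\sigma_{\alpha,\beta}\mapsto \sigma_{\alpha,1-\beta}$, which fixes $W_v$ pointwise.

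Applying this reflection to $W_l$, the semicircle of center $(3/16,0)$ and radius $3/16$, produces the semicircle of center $(13/16,0)$ with the same radius $3/16$. By the enumeration of possible actual walls to the right of $W_v$ carried out in Section \ref{SectionComputationsWalls}, this image coincides with $W_r$. Since $\Phi$ is an autoequivalence preserving $v$ and $W_l$ is an actual flopping wall, the same holds for $W_r$, and the induced wall-crossings produce the same flop of $\calM_S$ described in Section \ref{SectionCrossingTheWalls}.

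For the conclusion about $\Mov(\calM_S)$, I invoke the $\Aut(\D^b(S))$-equivariance of $l$ (\cite{BMMMPwallcrossing}): $l(W_r)=l(\Phi W_l)=\Phi_*(l(W_l))$, where $\Phi_*$ is the induced isometry of $\NS(\calM_S)_\matR$ coming from the Mukai morphism $\theta_v:v^\perp\cap\Lambda_{\mathrm{alg}}\xrightarrow{\sim}\NS(\calM_S)$. Because both walls produce the same flopping birational modification (the flop along the $\matP^2$-bundle over $S$), they pick out the same codimension-one wall of $\Mov(\calM_S)$ separating $\Nef(\calM_S)$ from $\Nef(\calM_S[5,2,6])$. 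The main technical subtlety I expect is pinning down the correct shift in the definition of $\Phi$ so that it acts exactly by the claimed reflection at the heart level and genuinely fixes $v$; once this is settled, the rest of the argument is a direct calculation combined with the equivariance of $l$.
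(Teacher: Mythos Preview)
Your approach is sound in outline but genuinely different from the paper's, and there is one technical slip worth flagging.

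\textbf{The slip.} The derived dual $R\calH om(-,\calO_S)$ is \emph{contravariant}; no shift turns it into a covariant autoequivalence of $\D^b(S)$. Your $\Phi$ is therefore an anti-autoequivalence. This is not fatal: anti-autoequivalences still act on $\Stab(S)$ (with a complex conjugation on central charges and a reversal of phases) and still send $\sigma$-stable objects to $\Phi(\sigma)$-stable objects, so the reflection $\sigma_{\alpha,\beta}\mapsto\sigma_{\alpha,1-\beta}$ and the conclusion that $W_r=\Phi(W_l)$ is an \emph{actual} wall both survive. But you should state clearly that you are working with an anti-equivalence and adjust the action on $\Stab(S)$ accordingly; the phrase ``shifted appropriately so as to be a covariant autoequivalence'' is simply wrong.

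\textbf{Comparison with the paper.} The paper does not construct $\Phi$. Instead it uses the factorisation $l = (\text{Weyl fold})\circ l_0$ from \cite{BMMMPwallcrossing}, Theorem~10.2: it computes directly that $l_0(W_l)=\matR(16,3,0)$ and $l_0(W_r)=\matR(16,13,80)$ in $v^\perp$, and then checks that the reflection $\rho_D$ in the class $D=(2,1,5)$ associated to the bouncing wall $W_v$ swaps these two rays. Since $\rho_D$ is one of the Weyl reflections absorbed by $l$, the images agree.

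The two approaches are in fact closely linked: your $\Phi$, acting on $v^\perp$, sends $e_1=(0,-1,-8)\mapsto -e_1$ and fixes $e_2=(2,1,5)$, i.e.\ it \emph{is} $\rho_D$. So the ``equivariance of $l$'' step in your argument, made precise, amounts to observing that $\Phi_*=\rho_D$ lies in the Weyl group that $l$ already quotients by. Your route has the advantage of giving a conceptual reason for the symmetry (Serre-type duality) and of showing directly that $W_r$ is actual; the paper's route is a shorter pure computation once the Bayer--Macr\`i machinery is in place.
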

			
			\begin{proof}
				Following \cite[Thm. 10.2]{BMMMPwallcrossing} we see that the map $l$ is the composition of a map
				$$l_0\colon \Stab^+(S) \to \Pos(\calM_S)$$
				and the action of a Weyl group generated by exceptional reflections on $\NS(\calM_S)$. Identifying $\NS(\calM_S)$ with $v^\perp$ (see \cite{YoshiokaModuliSpacesSheavesAbelianSurfaces}), computations show that a numerical wall $W(w)$ associated to a destabilizing class $w$ is sent by $l_0$ to $w^\perp\cap v^\perp$. We get
				\begin{eqnarray*}
					l_0(W_r) &=& v^\perp \cap (3,1,3)^\perp = \matR(16,13,80) \\
					l_0(W_l) &=& v^\perp\cap (1,1,8)^\perp = \matR(16,3,0).
				\end{eqnarray*}
				The reflection $\rho_D$ which identifies the chambers in both side of $W_v$ is is the reflection in the image of the vertical wall in $\NS(\calM_S)$. The latter is orthogonal to the class $D=(2,1,5)$ (in fact, this computations already appear in proof of \cite[Lem. 10.1]{BMMMPwallcrossing}). Direct computations give $\rho_D(16,13,80)=-(16,3,0)$.
			\end{proof}

			\subsubsection{Identifying the birational models}\label{SectionIdentifyingTheBirationalModels}

			Thanks to the description of the wallcrossings, we can complete the proof of Theorem \ref{ThmKtrivialBirModels} and identify the birational models of $\calM_S$ appearing in \S \ref{sectionLagrangianFibration}.

			\begin{proposition}\label{PropMS213MS526iso}
				Let $\sigma$ be a stability condition in the interior chamber cut out by $W_l$. Then 
				$$\calM_S[5,2,6] \simeq \calM_\sigma[2,1,3].$$
			\end{proposition}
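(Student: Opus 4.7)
I would use the spherical twist $T := T_{\calU_S^\vee}$ as the desired isomorphism. Since $v(\calU_S^\vee) = (3,1,3)$ satisfies $v(\calU_S^\vee)^2 = 16 - 9 - 9 = -2$ and $\calU_S^\vee$ is rigid, $\calU_S^\vee$ is a spherical object and $T$ is an exact autoequivalence of $\D^b(S)$. On the Mukai lattice, $T$ acts by the reflection $w \mapsto w + \langle w, v(\calU_S^\vee)\rangle v(\calU_S^\vee)$; since $\langle (2,1,3),(3,1,3)\rangle = 16 - 6 - 9 = 1$, one has $T((2,1,3)) = (5,2,6)$.

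Since any autoequivalence of $\D^b(S)$ acts on $\Stab^+(S)$ and takes stable objects to stable objects, $T$ produces isomorphisms
$$T : \calM_\sigma[(2,1,3)] \xrightarrow{\sim} \calM_{T(\sigma)}[(5,2,6)].$$
The proposition therefore reduces to showing that $T(\sigma)$ lies in the Gieseker chamber for the Mukai vector $(5,2,6)$, whereupon Theorem \ref{ThmGiesekerChamberModuliStabCond} identifies the right hand side with $\calM_S[5,2,6]$.

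For this last verification I would track $\sigma$-stable objects $E$ of class $(2,1,3)$ explicitly through the twist. By Proposition \ref{PropCircularWallDescriptionLeftSide} and the wall-crossing analysis of Section \ref{SectionCrossingTheWalls}, for $\sigma = \sigma_{\alpha,\beta}$ with $\beta < 1/3$ in the interior of $W_l$, such an $E$ fits in a flopped triangle $\calI_x^\vee[1] \to E \to \calU_S^\vee$ parametrized by $x \in S$. Applying $\Hom^\bullet(\calU_S^\vee, -)$ to this triangle and using the spherical identity $\Hom^\bullet(\calU_S^\vee, \calU_S^\vee) = \matC \oplus \matC[-2]$ together with a Serre-dual computation of $\Hom^\bullet(\calU_S^\vee, \calI_x^\vee[1])$, one checks that $\Hom^\bullet(\calU_S^\vee, E) = \matC$ concentrated in degree $0$. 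Then $T(E)$ is the cone of the evaluation $\calU_S^\vee \to E$, which comes out as a coherent sheaf $G$ of class $(5,2,6)$; the stability of $E$ propagates through the twist and forces $G$ to be Gieseker-stable.

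The main obstacle is the final step: ensuring that $T(E)$ really lands in the Gieseker moduli space, i.e. that it is a genuine Gieseker-stable sheaf (rather than a shift of one or a strictly semistable object). As a fallback, one can argue chamber-theoretically via Theorem \ref{ThmBMChambStabisChambMov}: the space $\calM_S[5,2,6]$ is $K$-trivially birational to $\calM_S$ by the constructions of Section \ref{sectionLagrangianFibration}, hence equal to $\calM_\tau[(2,1,3)]$ for some chamber $\tau$; the analysis of Section \ref{SectionCrossingTheWalls} leaves only the interior of $W_l$ as a candidate distinct from the Gieseker chamber, while the flopping (non-trivial) nature of the wall-crossing at $W_l$ forbids $\calM_S[5,2,6] \simeq \calM_S$.
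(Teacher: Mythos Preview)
Your fallback argument is essentially the paper's proof, but it has a gap at the crucial step. The paper argues exactly as you do: by the wall computations there are only two birational models, so $\calM_S[5,2,6]$ must be either $\calM_S$ or $\calM_\sigma[2,1,3]$. To exclude the first option the paper invokes \cite{YoshiokaSomeExamplesMukaiReflectionsK3Surf}, Theorem~2.5, which shows directly that $\calM_S[2,1,3]$ and $\calM_S[5,2,6]$ are related by a genuine flop (a Mukai reflection in the class $(3,1,3)$) and hence are not isomorphic. Your sentence ``the flopping (non-trivial) nature of the wall-crossing at $W_l$ forbids $\calM_S[5,2,6]\simeq\calM_S$'' does not supply this: the wall-crossing analysis at $W_l$ tells you $\calM_\sigma[2,1,3]\not\simeq\calM_S[2,1,3]$, but says nothing a priori about which of the two models $\calM_S[5,2,6]$ is. You need an independent reason why $\calM_S[5,2,6]\not\simeq\calM_S$, and that is precisely what Yoshioka's theorem provides.

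Your primary approach via the spherical twist $T=T_{\calU_S^\vee}$ is natural and in fact is the content of the Yoshioka result the paper cites, so you are essentially reproving that reference rather than taking a different route. Two remarks on the execution. First, your claim that \emph{every} $\sigma$-stable $E$ sits in a flopped triangle $\calI_x^\vee[1]\to E\to\calU_S^\vee$ is false: only the objects on the flopped $\matP^2$-bundle over $S$ have this form; the generic $E$ is a Gieseker-stable sheaf $F_S$ with $\Hom(\calU_S^\vee,F_S)=0$ that remains stable across the wall. For such $F_S$ one has $\Hom^\bullet(\calU_S^\vee,F_S)=\matC[-1]$, so $T(F_S)$ is the unique nontrivial extension of $\calU_S^\vee$ by $F_S$, i.e.\ $(\phi\phi^!F)_S$, whose $\mu$-stability is already recorded in Lemma~\ref{lemmaphiphi!stable}. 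Second, even granting that $T(E)$ is a Gieseker-stable sheaf for all (or generically many) $\sigma$-stable $E$, this yields only a birational map $\calM_\sigma[2,1,3]\dashrightarrow\calM_S[5,2,6]$; to upgrade it to an isomorphism you still need to know that $T(\sigma)$ lies in the Gieseker chamber for $(5,2,6)$, which amounts to a separate wall computation for that Mukai vector or to the full strength of Yoshioka's argument. So the ``main obstacle'' you flag is real, and the paper bypasses it by quoting the reference rather than redoing it.
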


			\begin{proof}
				By computations of walls, $\calM_S[5,2,6]$ is either isomorphic to $\calM_\sigma[2,1,3]$ or isomorphic to $\calM_S$. By \cite[Thm. 2.5]{YoshiokaSomeExamplesMukaiReflectionsK3Surf} (with $v=(5,2,6)$, $w=(2,1,3)$ and $v_1=(3,1,3)$ in the author's notations), we see that $\calM_S[5,2,6]$ and $\calM_S$ are not isomorphic and related by a flop. Moreover, this flop $\calM_S \dashrightarrow \calM_S[5,2,6]$ is exactly the one given by wallcrossing $W_l$, as described in Lemma \ref{LemmaLeftWallIsFlopP2BundleOnS}.
			\end{proof}

			\begin{proposition}\label{PropMS526MS010iso}
				The functor $\phi_{10}$ induces an isomorphism 
				$$\phi_{10}\colon \calM_{(S',\alpha)} \xrightarrow{\sim} \calM_S[5,2,6].$$
			\end{proposition}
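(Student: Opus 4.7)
The plan is to upgrade the birational map $\calM_{S'}[0,H',0] \dashrightarrow \calM_S[5,2,6]$ of Theorem \ref{thmactuallagfibrationbiratmodel} to an isomorphism by transporting stability conditions through the derived equivalence $\phi_{10}$ and then identifying the resulting Bridgeland moduli with the target Gieseker moduli.

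First, I would pick a generic stability condition $\tau \in \Stab^+(S',\alpha)$ lying in the Gieseker chamber for the primitive Mukai vector $(0,H',0)$, so that $\calM_\tau^{S'}[0,H',0] = \calM_{S'}[0,H',0]$; such a chamber exists by the twisted analogue of Theorem \ref{ThmGiesekerChamberModuliStabCond}. Since $\phi_{10}$ is an exact equivalence, the general machinery of \cite{BMMMPwallcrossing} (and its twisted version) provides a transported stability condition $\sigma := \phi_{10}(\tau) \in \Stab^+(S)$ together with a canonical isomorphism
\[
\phi_{10} : \calM_\tau^{S'}[0,H',0] \xrightarrow{\sim} \calM_\sigma^{S}\bigl[\phi_{10}^{\mathrm{coh}}(0,H',0)\bigr].
\]

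Next, I would identify the transformed Mukai vector. By Lemma \ref{lemisoway}, for $L \in \Pic^2\Gamma$ with smooth $\Gamma \subset S'$ corresponding to some $X \in \calW$, we have $\phi_{10}\bigl((i_{\Gamma S'})_*L\bigr) \simeq i_{SX}^*\phi L$. By Proposition \ref{propPic2isoMx52}, $\phi L$ lies in $\calM_X(5,2,31,7)$, and restriction to $S$ yields a sheaf of Mukai vector $(5,2,6)$. Hence $\phi_{10}^{\mathrm{coh}}(0,H',0) = (5,2,6)$ and
\[
\phi_{10} : \calM_{S'}[0,H',0] \xrightarrow{\sim} \calM_\sigma^{S}[5,2,6].
\]

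It remains to identify $\calM_\sigma^{S}[5,2,6]$ with $\calM_S[5,2,6]$. This is the only real obstacle: a priori $\calM_\sigma^S[5,2,6]$ could be separated from $\calM_S[5,2,6]$ by a flopping wall. To rule this out I would proceed in two ways. On one hand, the computation above shows that $\phi_{10}$ sends the generic (sheaf) locus of $\calM_{S'}[0,H',0]$ to \emph{genuine} coherent sheaves of Mukai vector $(5,2,6)$ (not to shifts or complexes), so $\sigma$ lies in the Gieseker chamber for $(5,2,6)$ by Theorem \ref{ThmGiesekerChamberModuliStabCond}. On the other hand, by Theorem \ref{ThmKtrivialBirModels} the only smooth $K$-trivial birational models in this class are $\calM_S[2,1,3]$ and $\calM_S[5,2,6]$, and these are distinguished by the rank of the generic element (respectively $2$ and $5$); the image has rank $5$, which forces $\calM_\sigma^S[5,2,6] = \calM_S[5,2,6]$. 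Combining this with the displayed isomorphism completes the proof.
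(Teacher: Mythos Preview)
Your overall strategy coincides with the paper's: transport a stability condition through $\phi_{10}$, identify the Mukai vector as $(5,2,6)$ via Lemma~\ref{lemisoway}, reduce to the dichotomy $\calM_\sigma[5,2,6]\in\{\calM_S,\calM_S[5,2,6]\}$ using the wall computations, and then rule out $\calM_S$. The gap is entirely in this last step.

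Your argument (a) misuses Theorem~\ref{ThmGiesekerChamberModuliStabCond}: that theorem only asserts that in the Gieseker chamber the stable objects are the Gieseker-stable sheaves, not the converse. Knowing that a dense open set of $\sigma$-stable objects happen to be Gieseker-stable sheaves does \emph{not} force $\sigma$ into the Gieseker chamber; for instance, for $v=(2,1,3)$ the generic Gieseker sheaf stays $\sigma$-stable after crossing $W_l$, yet the chamber has changed and the moduli space is genuinely different. Your argument (b) has a related problem: the ``rank of the generic element'' is not an invariant of the abstract birational model, only of a particular realization as a moduli functor. If $\calM_\sigma[5,2,6]$ were abstractly isomorphic to $\calM_S[2,1,3]$, its points would still be objects of Mukai vector $(5,2,6)$, not rank-$2$ sheaves, so the rank comparison does not separate the two options. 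Also, invoking Theorem~\ref{ThmKtrivialBirModels} here is circular in form: that theorem packages the present proposition together with Proposition~\ref{PropMS213MS526iso}; you should cite the latter (and the wall computations of \S\ref{SectionComputationsWalls}) directly.

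The paper distinguishes the two models by a different mechanism: it picks $F\neq G\in\calM_X$ in the same fibre of $\phi^!$, observes that $F_S\neq G_S$ in $\calM_S$ while $(\phi\phi^!F)_S=(\phi\phi^!G)_S$ in $\calM_S[5,2,6]$, and uses this together with the fact that $\phi_{10}$ is an equivalence to exclude $\calM_S$. A cleaner alternative you could use, already available from the wall analysis, is that $\calM_{S'}[0,H',0]$ carries a regular Lagrangian fibration (the support map), hence so does $\calM_\sigma[5,2,6]$; but the chamber of $\calM_S$ in $\Mov(\calM_S)$ has no isotropic boundary ray (its extremal contractions are the Uhlenbeck map and the flop), while the chamber of $\calM_S[5,2,6]$ does, so $\calM_\sigma[5,2,6]\simeq\calM_S[5,2,6]$.
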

			
			\begin{proof}
				The equivalence $\phi_{10}$ gives an isomorphism $\calM_{(S',\alpha)} \xrightarrow{\sim} \calM_\sigma[5,2,6]$ for some stability condition $\sigma\in \Stab(S)$. By \cite[Thm. 2.12]{BMMMPwallcrossing} we can assume $\sigma\in\Stab^+(S)$, and once again by Proposition~\ref{propallstabaregeometric} we can assume that $\sigma$ is of the form $\sigma=\sigma_{\alpha,\beta}$. In particular, since $\calM_{(S',\alpha)}$ is a smooth $K$-trivial birational model of $\calM_S$, then $\calM_\sigma[5,2,6]$ is either isomorphic to $\calM_S$ or $\calM_S[5,2,6]$. 
				
				By Theorem~\ref{thmactuallagfibrationbiratmodel}, the birational maps between these three moduli spaces are given as follows:

				$$\begin{tikzcd}[row sep=tiny]
					\calM_{(S',\alpha)} \supset \Pic^2_\alpha(\mathfrak{G})^o \ar[r,hook,"\phi_{10}"] & \calM_S[5,2,6] & \calM_\mathfrak{X}^o \subset \calM_S \ar[l,hook]  \\
					(i_{\Gamma S'})_*(\phi_{11}^!F) \ar[r,mapsto] & (\phi_{11}\phi_{11}^!F)_S & F_S \ar[l,mapsto],
				\end{tikzcd} $$
				for $F\in\calM_X$ globally generated on some $[X]\in\calW$. But $\phi_{10}$ extends to a (non-injective) regular morphism over the bigger open $\Pic^2_\alpha(\mathfrak{G})$, hence it suffices to find two elements in $\Pic^2_\alpha(\mathfrak{G})$ whose images are in the locus of $\calM_S[5,2,6]$ contracted by the flop $\calM_S[5,2,6] \dashrightarrow \calM_S$. 
				
				To do so, it is enough to find $[X]\in\calW$ and  $F, G\in\calM_X$ with $F_S\simeq G_S$ but $\phi_{11}^!F\not\simeq \phi_{11}^!G$. In the notation of Theorem~\ref{thmrestotal}, one can find $F\not\simeq G$ with $F_S\simeq G_S$, in particular $M_F=L_G$, $L_F=M_G$ but $L_F\neq M_F$. We know that $F$ and $G$ are not globally generated.  By definition, $\phi_{11}^!$ is the projection of $\D^b(X)=\langle \D^b(\Gamma),\calO_X,\calU^\vee\rangle$ to its component $\D^b(\Gamma)$, for $\Gamma$ the curve corresponding to $X$. In particular, Proposition~\ref{PropDescriptionSheafMx217} implies that $\phi_{11}^!(F)\simeq \phi_{11}^!(\calO_{L_F}(-1))$ (similarly for $G$). Now, the association $L\mapsto  \phi^!(\calO_L(-1))$, for $L$ a line in $X$, is injective \cite[Prop. 4.14]{BrambillaFaenzig9}. Hence, $L_F\neq L_G$ implies $\phi_{11}^!F\not\simeq \phi_{11}^!G$.
				
				%Let $[X]\in\calW$ be a Fano threefold, let $[\Gamma]\in\calW$ be the corresponding quartic curve and let $F\neq G\in\calM_X$ be stable sheaves such that $\phi_{11}^!F \simeq \phi_{11}^!G$. Then $F_S$ and $G_S$ define two different points of $\calM_S$, though $(\phi_{11}\phi_{11}^!F)_S=(\phi_{11}\phi_{11}^!G)_S\in \calM_S[5,2,6]$. In particular we have $\phi_{10}(i_{\Gamma S'})_*(\phi_{11}^!F) \simeq \phi_{10}(i_{\Gamma S'})_*(\phi_{11}^!G)$. In view of Lemma \ref{lemisoway}, since $\phi_{10}$ is an equivalence, we necessarily have $\calM_\sigma[5,2,6]\simeq \calM_S[5,2,6]$.
			\end{proof}

			\subsection{Movable and nef cones of $\calM_S$}\label{SectionMovNefMS}

			In this last section, we want to give a precise description of $\Pos(\calM_S)\subset \NS(\calM_S)$. Recall we have $v=(2,1,3)$. Straight computations give an orthogonal basis $\calB\coloneqq \{e_1=(0,-1,-8),e_2=(2,1,5)\}$ of $v^\perp\subset \Lambda$, that we identify with $\NS(\calM_S)$.

			\begin{proposition}\label{PropPosMovNefMS}
				The positive cone $\overline{\Pos}(\calM_S)$ is generated by $e_1+2e_2$ and $e_1-2e_2$. The closed movable cone $\overline{\Mov}(\calM_S)$ is cut out in $\overline{\Pos}(\calM_S)$ by the line $\matR e_1$, and it identifies with the chamber which contains the ample class $21e_1+8e_2$.
				
				Moreover, $\overline{\Mov}(\calM_S)$ decomposes into two chambers cut out by the line $\matR(8e_2+5e_1)$. The chamber adjacent to the line $\matR e_1$ is $\Nef(\calM_S)$ and correspond to the birational model $\calM_S$, and the other chamber is the image of $\Nef(\calM_S[5,2,6])$ via the birational map given by crossing the wall $W_l$. See Figure \ref{FigNSMS}.
			\end{proposition}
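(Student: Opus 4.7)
My plan is to combine the Mukai-theoretic description of $\NS(\calM_S)=v^\perp$ with the wall-and-chamber structure of $\Stab^+(S)$ computed in Sections \ref{SectionComputationsWalls}--\ref{SectionCrossingTheWalls}, through the map $l$ of Theorem \ref{ThmBMChambStabisChambMov}.

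I would first compute the Mukai pairing in the basis $\calB$: a direct calculation gives $\langle e_1,e_1\rangle=16$, $\langle e_2,e_2\rangle=-4$, $\langle e_1,e_2\rangle=0$, so the square of $ae_1+be_2$ equals $4(2a-b)(2a+b)$. The two components of $\{q>0\}$ are therefore $\{a>0,|b|<2a\}$ and its opposite; choosing the one containing ample classes (checked at the end) yields $\overline{\Pos(\calM_S)}=\matR_{\geq 0}(e_1-2e_2)+\matR_{\geq 0}(e_1+2e_2)$. Next I would compute the images of the walls under $l_0$: following the proof of Proposition \ref{PropRightWallIsReflectLeftWall}, a numerical wall $W(w)$ satisfies $l_0(W(w))=w^\perp\cap v^\perp$. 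Applying this to the classes $(2,1,4)$, $(3,1,3)$, $(1,1,8)$ defining $W_v$, $W_l$, $W_r$ respectively, and expanding in $\calB$, I obtain
\[
l_0(W_v)=\matR\, e_1,\qquad l_0(W_l)=\matR(5e_1+8e_2),\qquad l_0(W_r)=\matR(-5e_1+8e_2),
\]
where $l_0(W_l)$ and $l_0(W_r)$ are related by the reflection across $\matR e_1$ and hence define the same wall in $\Mov(\calM_S)$.

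I would then use the wall-types established in Section \ref{SectionCrossingTheWalls}: $W_v$ is a bouncing (Li--Gieseker--Uhlenbeck) wall, so by \cite{BMMMPwallcrossing}, Lemma~10.1, its $l_0$-image $\matR e_1$ is a boundary wall of $\Mov(\calM_S)$; on the other hand $W_l$ is a flopping wall, so $\matR(5e_1+8e_2)$ is an interior wall of $\Mov(\calM_S)$ separating two chambers. To orient the picture I would compute $l$ on a Gieseker stability condition using the formula $\langle l_\sigma,w\rangle=\Im\bigl(Z(w)/(-Z(v))\bigr)$ from \cite{BMMMPwallcrossing}; a straightforward calculation gives
\[
l(\sigma_{\alpha,0})=\frac{\alpha}{256\alpha^4+160\alpha^2+9}\bigl((5+16\alpha^2)\,e_1+8\,e_2\bigr).
\]
Setting $\alpha=1$ (a point of the Gieseker chamber, since $\sigma_{1,0}$ lies outside all three walls and satisfies $\beta<1/2$) yields $l(\sigma_{1,0})=\tfrac{1}{425}(21e_1+8e_2)$, which by Theorem \ref{ThmGiesekerChamberModuliStabCond} is ample on $\calM_S$. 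Since its $e_2$-coordinate is positive, this fixes the correct component of $\Pos(\calM_S)$ and shows that $\Mov(\calM_S)\cap\Pos(\calM_S)$ lies on the side $b>0$ of the line $\matR e_1$; since moreover $b/a=8/21<8/5$, the class $21e_1+8e_2$ lies in the chamber adjacent to $\matR e_1$, which must therefore be $\Nef(\calM_S)$. The remaining chamber, bounded by $\matR(5e_1+8e_2)$ and the extremal ray $\matR_{\geq 0}(e_1+2e_2)$, is then the image of $\Nef(\calM_S[5,2,6])$ under the wall-crossing map of Proposition \ref{PropMS213MS526iso}.

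The main obstacle is the explicit computation of $l(\sigma_{\alpha,0})$ with the correct sign conventions of \cite{BMMMPwallcrossing}, which is what pins down both the choice of positive-cone component and the precise chamber containing $21e_1+8e_2$; once this is in place, all other statements follow from the general wall-and-chamber description of $\Mov$ and from the explicit identification of $l_0$-images of the walls.
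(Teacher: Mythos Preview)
Your proposal is correct and follows essentially the same approach as the paper: compute the Mukai form in the basis $\calB$ to get the boundary rays of $\overline{\Pos(\calM_S)}$, identify the images $l_0(W_v)=\matR e_1$ and $l_0(W_l)=\matR(5e_1+8e_2)$ via $w^\perp\cap v^\perp$, and pin down the orientation by computing the ample class $l(\sigma_{1,0})=\tfrac{1}{425}(21e_1+8e_2)$. Your version is in fact more explicit than the paper's (you give the full formula for $l(\sigma_{\alpha,0})$ and check $8/21<8/5$ to locate the ample class relative to the flopping wall), but the strategy is identical.
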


			\begin{proof}
				First, note that $w=ae_1 + be_2$, for $a,b\in\matZ$, satisfies $w^2=0$ if and only if $16a^2=4b^2=0$. Hence the cone $\overline{\Pos}(\calM_S)$ is the cone generated, up to a sign, by $\{e_1+2e_2,e_1-2e_2\}$ and contains either $e_1$ or its inverse. Pick $\alpha=1,\beta=0$. It gives a well-defined generic stability condition $\sigma\coloneqq\sigma_{1,0}$, and its image $A\coloneqq l(\sigma)$ is an ample class (Theorem \ref{ThmBMChambStabisChambMov}). By computations, we find 
				$$A=\frac{1}{425}(16,-13,128)=  \frac{1}{425}(21e_1+8e_2).$$
				We deduce that $\overline{\Pos}(\calM_S)$ is generated by $\{e_1+e_2, e_1-2e_2\}$. Now the image of the vertical wall $W_v$ is given by $v^\perp \cap (2,1,4)^\perp = \matR e_1$. Hence $\overline{\Mov}(\calM_S)$ is the upper half-cone of $\overline{\Pos}(\calM_S)$ cut out by $\matR e_1$.
				
				Finally, the image of the circular wall $W_l$ is given by $v^\perp \cap (3,1,3)^\perp = \matR(16,3,0)$, and we obtain two chambers in $\overline{\Mov}(\calM_S)$. The chamber containing the ample class $A$ is $\Nef(\calM_S)$, and the other chamber corresponds to the unique other birational model $\calM_S[5,2,6]$. 
				
			\end{proof}

			Figure \ref{FigNSMS} represents the positive cone $\overline{\Pos}(\calM_S)$, and Figure \ref{FigStabS} represents the corresponding walls (in green and magenta) and chambers (in red and blue) in the $(\beta,\alpha)$-plane in $\Stab^+(S)$. The movable cone $\Mov(\calM_S)$ in Figure \ref{FigNSMS} is the upper half cone, composed of $\Nef(\calM_S)$ in red and (the flopped image of) $\Nef(\calM_S[5,2,6])$ in blue. The two lower chambers are their reflections in $\matR e_1$.

			\newpage

			\begin{figure}[tbp]
				\begin{center}
					%The $(\beta,\alpha)$-plane in $\Stab^+(S)$.
					
					\begin{tikzpicture}[scale=2.5]
						
						%Red:
						
						\draw[color=red!0,fill=red!30] (-0.5,0) rectangle (2,1);
						\draw[color=red!0,fill=red!15] (2,0) rectangle (4.5,1);

						%Vectors:
						\draw[->] (-0.5,0) -- (4.5,0);
						\draw[->] (2,0) -- (2,1);
						
						%Blue
						\draw[ultra thick, color=magenta, fill=blue!35] (0,0) arc (180:0:12/16);
						\draw[ultra thick, color=magenta!50, fill=blue!15] (4,0) arc (0:180:12/16);
						
						%Vectors:
						\draw[thick,->] (-0.5,0) -- (4.5,0);
						\draw[ultra thick, color=teal, ->] (2,0) -- (2,1);

						%Nodes:
						\node[anchor=south] at (2,1) {$\alpha$};
						\node[anchor=west] at (4.5,0) {$\beta$};
						
						\draw[fill, color=black] (0,0) circle (0.02);
						\node[anchor=north] at (0,0) {$0$};
						\draw[fill, color=black] (14/16,0) circle (0.02);
						\node[anchor=north] at (14/16,0) {$3/16$};
						\draw[fill, color=black] (52/16,0) circle (0.02);
						\node[anchor=north] at (52/16,0) {$13/16$};
						\draw[fill, color=black] (2,0) circle (0.02);
						\node[anchor=north] at (2,0) {$1/2$};
						\draw[fill, color=black] (4,0) circle (0.02);
						\node[anchor=north] at (4,0) {$1$};
					\end{tikzpicture}
					
					\caption{The $(\beta,\alpha)$-plane in $\Stab^+(S)$.}
					\label{FigStabS}
				\end{center}
			\end{figure}
			
			%\vspace{2cm}

			\begin{figure}[tbp]
				\begin{center}
					\begin{tikzpicture}[scale=2.5]
						
						%Colors:
						
						\draw[color=blue!0, fill=blue!35] (0,0) -- (1,2) -- (5/4,2) -- cycle;
						\draw[color=blue!0, fill=blue!15] (0,0) -- (1,-2) -- (5/4,-2) -- cycle;
						
						\draw[color=red!0, fill=red!30] (0,0) -- (5/4,2) -- (2,0) -- cycle;
						\draw[color=red!0, fill=red!15] (0,0) -- (5/4,-2) -- (2,0) --cycle;
						
						%Vectors:
						\draw[->] (0,0) -- (0,2);
						\draw[ultra thick,color=teal,->] (0,0) -- (2,0);
						\draw (0,0) -- (0,-2);

						\draw[ultra thick, color=magenta,->] (0,0) -- (5/4,2);
						\draw[ultra thick, color=magenta!50,->] (0,0) -- (5/4,-2);
						\draw[ultra thick,->] (0,0) -- (1,2);
						\draw[ultra thick,->] (0,0) -- (1,-2);
						%Nodes:
						%\draw[fill, color=black] (2,0) circle (0.01);
						%\draw [color=white!0, fill=white] (1,-0.1) rectangle (1.3,0.1);
						\node[anchor=west] at (2,0) {$\matR e_1$};
						\node[anchor=east] at (0,2) {$\matR e_2$};
						\node[anchor=east] at (0.95,2) {$\matR (2e_2+e_1)$};
						\node[anchor=west] at (5/4,2) {$\matR (8e_2+5e_1)$};
					\end{tikzpicture}
					
					\caption{The positive cone  $\overline{\Pos}(\calM_S)$.}
					\label{FigNSMS}
				\end{center}
			\end{figure}

			%  This inserts the bib file
			\bibliographystyle{abbrv}
			\bibliography{bibLagFib}
		\end{document}